\newtheorem{theorem}{Theorem}
\newtheorem{corollary}{Corollary}
\newtheorem{definition}{Definition}
\newtheorem{example}{Example}
\newtheorem{lemma}{Lemma}
\newtheorem{remark}{Remark}
\newcommand{\eps}{\varepsilon}
\DeclareMathOperator{\dist}{dist}
\DeclareMathOperator{\fix}{Fix}
\DeclareMathOperator{\gr}{Gr}
\DeclareMathOperator{\co}{co}
\DeclareMathOperator{\Int}{int}
\DeclareMathOperator{\cl}{cl}
\DeclareMathOperator{\esssupp}{ess\;supp}
\newcommand{\w}{\tilde}
\newcommand{\map}{\multimap}
\newcommand{\loc}{L^1_{\rm loc}(\Omega,E)}
\newcommand{\eL}[2]{L^{#1}_{{\rm loc}}\left(#2,E\right)}
\newcommand{\<}{\leqslant}
\newcommand{\f}{\left}
\newcommand{\g}{\right}
\newcommand{\n}{{n\geqslant 1}}
\newcommand{\K}{{k\geqslant 1}}
\newcommand{\z}[1]{(#1)_{n=1}^\infty}
\newcommand{\x}[1]{\{#1\}_{n=1}^\infty}
\DeclareMathOperator{\F}{F}
\DeclareMathOperator{\ka}{k}
\DeclareMathOperator{\gie}{g}
\newcommand{\R}[1]{\mathbb{R}^{#1}}
\newcommand{\res}[2]{#1\:\rule[-1.5mm]{0.45pt}{4mm}\,\rule[-1mm]{0mm}{4mm}_{#2}}
\begin{document}
\title[Existence of solutions for a class of multivalued functional integral equations]{Existence of solutions for a class of multivalued functional integral equations of Volterra type via the measure of nonequicontinuity on the Fr\'echet space ${\bf C(\Omega,E)}$}
\author{Rados\l aw Pietkun}
\subjclass[2010]{45D05, 45N05, 46A50, 47H08, 47H10, 47N20}
\keywords{admissibility, condensing operator, fixed point, Fr\'echet space, MNC, Volterra integral inclusion}
\address{Toru\'n, Poland}
\email{rpietkun@pm.me}
\begin{abstract}
The existence of continuous not necessarily bounded solutions of nonlinear functional Volterra integral inclusions in infinite dimensional setting is shown with the aid of the measure of nonequicontinuity. New abstract topological fixed point results for admissible condensing operators are introduced. Weak compactness criterion in the space of locally integrable functions in the sense of Bochner is set forth. Some examples illustrating the usefulness of the presented approach are also included.
\end{abstract}
\maketitle
 
\section{Introduction}
There is a long practice of proving the existence of continuous solutions to integral equations of Volterra type. The authors of \cite{banas} came up with the idea of application of a measure of non-compactness defined on $BC(\R{}_+)$ to demonstrate the existence of solutions to Volterra integral equation of the form 
\begin{equation}\label{banas}
x(t)=f(t,x(t))+\int_0^tu(t,s,x(s))\,ds,\;\;\;t\geqslant 0.
\end{equation}
This approach turned out to be very prolific and resulted in many articles patterned on the above, to a greater or lesser extent. The papers \cite{alla,agh3,agh,agh2,arab,samadi,khandani} are focused on the case of a scalar univalent equation and they narrow the solutions' search region to the Banach space $BC(\Omega)$. In \cite{das1,das2,rizvi} the fixed point approach was used to obtain solutions of functional integral equations in sequence spaces $c_0$ and $\ell_1$. In this article, we get rid of the assumption of one-dimensionality and univalency of the Volterra equation and we allow the existence of unbounded solutions. Considered here set-valued variant of equation \eqref{banas} has basically the following form
\begin{equation}\label{general}
u(x)\in G\f(x,u(x),\int_{\Lambda(x)}k(x,y)F(y,u(y))\,dy\g),\;\;x\in\Omega
\end{equation}
with $G\colon\Omega\times E\times E\map E$, $F\colon\Omega\times E\map E$ and $\Lambda\colon\Omega\subset\R{N}\to{\mathfrak L}(\R{N})$. Caused by technical and competency restrictions we formulate sufficient conditions for the existence of continuous solutions to inclusion \eqref{general} in three particular cases framed in equations \eqref{inclusion2}, \eqref{volterra} and \eqref{hj}. The proofs of theorems regarding these equations boil down to the showing of fixed point existence of suitable operators, whose admissibility allows the application of Sadovski\u{\i} type fixed point result (Theorem \ref{fixed}). The assumption $(\F_5)$ regarding the multivalued perturbation $F$ poses a substitute of compactness in the space $E$ which, along with the quasi-Lipschitzeanity of the external operator $g$ gives the opportunity of showing that the superposition $N_g\circ(I\times(V\circ N_F))$ of the Nemytski\v{\i} operators $N_g$ and $N_F$ with the integral Volterra operator $V$ is condensing with respect to some measure of non-compactness defined on the Fr\'echet space $C(\Omega,E)$. In the existing situation, it is quite natural to accept that the Nemytski\v{\i} operator $N_F$ maps the space $C(\Omega,E)$ onto the Fr\'echet space $\loc$. The justification of the admissibility of operator $V\circ N_F$ forces the formulation of legible criteria of weak compactness in the space $\loc$. This was done in Theorem \ref{extension}. Taking into account some specific assumptions regarding the geometry of the Banach space $E$, this result generalizes the well-known Theorem \ref{ulger}. The article is complemented by four examples well illustrating the advantage of the formulated results over those published previously.\vspace{\baselineskip}
\par Let us introduce some notations which will be used in this paper. Let $(E,|\cdot|)$ be a Banach space, $E^*$ its normed dual and $(E,w)$ the space $E$ furnished with the weak topology. \par The normed space of bounded linear operators $S\colon E\to E$ is denoted by $\mathcal{L}(E)$. Given $S\in\mathcal{L}(E)$, $||S||_{{\mathcal L}}$ is the norm of $S$. For any $\eps>0$ and $A\subset E$, $B(A,\eps)$ ($D(A,\eps)$) stands for an open (closed) $\eps$-neighbourhood of the set $A$. If $x\in E$ we put $\dist(x,A):=\inf\{|x-y|\colon y\in A\}$. Besides, for two nonempty closed bounded subsets $A$, $B$ of $E$ the symbol $h(A,B)$ stands for the Hausdorff distance from $A$ to $B$, i.e. $h(A,B):=\max\{\sup\{\dist(x,B)\colon x\in A\},\sup\{\dist(y,A)\colon y\in B\}\}$.\par We use symbols of functional spaces, such as $C(\Omega,E)$, $\loc$, $L^\infty(\Omega,E^*)$, $H^2(\R{n})$, $(L^p(\Omega,E),||\cdot||_p)$, in their commonly accepted meaning.\par Given metric space X, a set-valued map $F\colon X\map E$ assigns to any $x \in X$ a nonempty subset $F(x)\subset E$. $F$ is (weakly) upper semicontinuous, if the small inverse image $F^{-1}(A)=\{x\in X\colon F(x)\subset A\}$ is open in $X$ whenever $A$ is (weakly) open in $E$. We say that $F\colon X\map E$ is upper hemicontinuous if for each $x^*\in E^*$, the function $\sigma(x^*,F(\cdot))\colon X\to\R{}\cup\{+\infty\}$ is upper semicontinuous (as an extended real function), where $\sigma(x^*,F(x))=\sup\limits_{y\in F(x)}\langle x^*,y\rangle$. We have the following characterization: a map $F\colon X\map E$ with convex values is weakly upper semicontinues and has weakly compact values iff given a sequence $(x_n,y_n)$ in the graph $\gr(F)$ of map $F$ with $x_n\xrightarrow[n\to\infty]{X}x$, there is a subsequence $y_{k_n}\xrightharpoonup[n\to\infty]{E}y\in F(x)$ ($\rightharpoonup$ denotes the weak convergence). The set of all fixed points of the map $F\colon E\map E$ is denoted by $\fix(F)$.\par Let $H^\ast(\cdot)$ denote the Alexander-Spanier cohomology functor with coefficients in the field of rational numbers ${\mathbb Q}$ (see \cite{spanier}). We say that a topological space $X$ is acyclic if the reduced cohomology $\w{H}^q(X)$ is $0$ for any $q\geqslant 0$. 
\par An upper semicontinuous map $F\colon E\map E$ is called acyclic if it has compact acyclic values. A set-valued map $F\colon E\map E$ is admissible (in the sense of \cite[Def.40.1]{gorn}) if there is a Hausdorff topological space $\Gamma$ and two continuous functions $p\colon\Gamma\to E$, $q\colon\Gamma\to E$ from which $p$ is a Vietoris map such that $F(x)=q(p^{-1}(x))$ for every $x\in E$. Clearly, every acyclic map is admissible. Moreover, the composition of admissible maps is admissible (\cite[Th.40.6]{gorn}).
\par A real function $\beta$ defined on the family of bounded subsets $\Omega$ of $E$ defined by the formulae \[\beta(\Omega):=\inf\{\eps>0:\Omega\mbox{ admits a finite covering by balls of a radius }\eps\}\]
is called the Hausdorff measure of non-compactness (MNC). Recall that this measure is regular, monotone, nonsingular, semi-additive, algebraically semi-additive and invariant under translation (for details see \cite{sadovski}).

\section{Fixed point results}
Our fixed point results rely on the concept of an abstract measure of non-compactness. That is why we will start from
\begin{definition}\label{mnc}
A set function $\mu\colon{\mathcal B}(\mathbb{F})\to P$, defined on the family ${\mathcal B}(\mathbb{F})$ of bounded subsets of the Fr\'echet space $\mathbb{F}$ with values in a positive cone $P$ of some partially ordered vector space $(E,\geqslant)$, is called a measure of non-compactness, if the following conditions are satisfied:
\begin{itemize}
\item[(i)] $\mu(\{x_0\}\cup\Omega)=\mu(\Omega)$ for every $x_0\in\mathbb{F}$ and every $\Omega\in{\mathcal B}(\mathbb{F})$,
\item[(ii)] $\mu(\overline{\Omega})=\mu(\Omega)$ for every $\Omega\in{\mathcal B}(\mathbb{F})$,
\item[(iii)] $\mu(\co\,\Omega)=\mu(\Omega)$ for every $\Omega\in{\mathcal B}(\mathbb{F})$.
\end{itemize}
\end{definition}
Having established axioms of the measure $\mu$, we can formulate fixed point theorems for admissible condensing set-valued operators defined on the Fr\'echet space:
\begin{theorem}\label{fixed}
Let $X$ be a nonempty closed convex and bounded subset of a Fr\'echet space $\mathbb{F}$ and $\mu\colon{\mathcal B}(\mathbb{F})\to P$ an MNC on $\mathbb{F}$ in the sense of Definition \ref{mnc}. Assume that $F\colon X\map X$ is an admissible set-valued operator satisfying
\begin{equation}\label{cond}
\exists\,f\in\Phi\,\forall\,\Omega\subset X\;\;\overline{\Omega}\,\text{ noncompact}\Rightarrow f\f(\,\mu(F(\Omega)),\mu(\Omega)\g)\in P\setminus\{0\},
\end{equation}
where \[\Phi:=\f\{f\colon P^2\to(E,\geqslant)\colon\Delta_{P^2}\subset f^{-1}(-P)\g\}.\]
Then $\fix(F)$ is nonempty and compact.
\end{theorem}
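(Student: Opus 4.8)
The plan is to run a Sadovski\u{\i}-type reduction to a compact convex invariant set and then invoke a Lefschetz-type fixed point theorem for admissible self-maps of compact convex sets. First I would fix a point $x_0\in X$ and consider the family $\Sigma$ of all nonempty closed convex subsets $C\subset X$ with $x_0\in C$ and $F(C)\subset C$; it is nonempty since $X\in\Sigma$. Putting $C^*:=\bigcap_{C\in\Sigma}C$, one checks directly that $C^*\in\Sigma$ (monotonicity of the set-valued image gives $F(C^*)\subset F(C)\subset C$ for every $C\in\Sigma$). The set $D:=\overline{\co}(\{x_0\}\cup F(C^*))$ then satisfies $D\subset C^*$, $x_0\in D$ and $F(D)\subset F(C^*)\subset D$, so $D\in\Sigma$; minimality of $C^*$ forces
\begin{equation*}
C^*=\overline{\co}\f(\{x_0\}\cup F(C^*)\g).
\end{equation*}
Applying the axioms of Definition \ref{mnc} in the order (ii), (iii), (i) now yields $\mu(C^*)=\mu(F(C^*))$.

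This identity is the crux of the existence part. Since $(\mu(F(C^*)),\mu(C^*))$ lies on the diagonal $\Delta_{P^2}$, the defining property of $\Phi$ gives $f(\mu(F(C^*)),\mu(C^*))\in -P$. If $C^*$ were noncompact, condition \eqref{cond} would simultaneously force this element into $P\setminus\{0\}$; as $P$ is the positive cone of a genuine (hence pointed) partial order, $P\cap(-P)=\{0\}$, and we reach a contradiction. Therefore $C^*$ is a nonempty compact convex subset of $\mathbb{F}$.

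On $C^*$ the operator $F$ restricts to an admissible self-map: if $F=q\circ p^{-1}$ with $p$ a Vietoris map, then restricting $p$ and $q$ to $p^{-1}(C^*)$ exhibits $F|_{C^*}\colon C^*\map C^*$ as admissible, because $F(C^*)\subset C^*$. Since a nonempty compact convex subset of a locally convex space is a compact absolute retract and is acyclic, the Lefschetz fixed point theorem for admissible maps applies (the Lefschetz number equals the Euler characteristic $1\neq 0$) and gives $\fix(F|_{C^*})\neq\emptyset$; a fortiori $\fix(F)\neq\emptyset$.

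It remains to prove that $\fix(F)$ is compact. Admissibility makes $F$ upper semicontinuous with compact values, so its graph is closed and $\fix(F)$ is closed in $X$. Moreover every fixed point satisfies $x\in F(x)$, whence $\fix(F)\subset F(\fix(F))$. I would then apply \eqref{cond} to $\Omega=\fix(F)$: the hard part is to show that the pair $(\mu(F(\fix(F))),\mu(\fix(F)))$ again lands on $\Delta_{P^2}$, so that the same pointedness argument rules out noncompactness. For single-valued $F$ this is automatic because $F(\fix(F))=\fix(F)$; in the present multivalued setting only the inclusion $\fix(F)\subset F(\fix(F))$ is available, and extracting the required equality of measures from the behaviour of $\mu$ on these sets is the main obstacle to be handled.
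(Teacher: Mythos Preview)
Your existence argument is essentially the paper's: build the minimal closed convex $F$-invariant set containing a fixed $x_0$, show it equals $\overline{\co}(\{x_0\}\cup F(C^*))$, use the MNC axioms to get $\mu(C^*)=\mu(F(C^*))$, and derive compactness from the pointedness of $P$ via the class $\Phi$. The paper then appeals to the Dugundji extension theorem and \cite[Th.7.4]{gorn2} rather than phrasing it as a Lefschetz argument, but this is the same mechanism.

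The compactness of $\fix(F)$ is where your proposal has a real gap, and you correctly identify it. Your plan to apply \eqref{cond} to $\Omega=\fix(F)$ cannot be completed: Definition~\ref{mnc} does \emph{not} assume monotonicity of $\mu$, so from the mere inclusion $\fix(F)\subset F(\fix(F))$ you cannot extract $\mu(\fix(F))=\mu(F(\fix(F)))$, and there is no axiom that would give it. The paper avoids this difficulty entirely by working not with arbitrary closed convex $F$-invariant sets but with Krasnosel'ski\u{\i}'s \emph{fundamental} sets: closed convex $T\subset X$ with $F(T)\subset T$ and the additional property that $x\in\overline{\co}(F(x)\cup T)$ implies $x\in T$. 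The intersection $T$ of all fundamental sets containing $x_0$ is still fundamental and still satisfies $T=\overline{\co}(\{x_0\}\cup F(T))$, so your compactness argument for $T$ goes through unchanged. The payoff of the extra clause is that every fixed point $x$ automatically lies in $T$ (because $x\in F(x)\subset\overline{\co}(F(x)\cup T)$), whence $\fix(F)$ is a closed subset of the compact set $T$ and is therefore compact. Replace your family $\Sigma$ by the family of fundamental sets and the proof closes with no further work.
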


\begin{proof}
Fix an arbitrary $x_0\in X$. Consider a family $\{T_\alpha\}_{\alpha\in A}$ of all fundamental subsets of the multimap $F$ containing $x_0$. Recall after Krasnosel'ski\u{\i} that the closed convex set $T\subset X$ is fundamental if $F(T)\subset T$ and for any $x\in X$, it follows from $x\in\overline{\co}\f(F(x)\cup T\g)$ that $x\in T$. Observe that family $\{T_\alpha\}_{\alpha\in A}$ is nonempty (take for example $X$). Define $T:=\bigcap\limits_{\alpha\in A}T_\alpha$. Next, note that $T$ and $\overline{\co}(F(T)\cup\{x_0\})$ are fundamental. Whence, $T=\overline{\co}(F(T)\cup\{x_0\})$.\par If $T$ is noncompact, then $f\f(\,\mu(F(T)),\mu(T)\g)\in P\setminus\{0\}$ for some $f\in\Phi$. Invoking the very definition of an MNC (Definition \ref{mnc}.), we arrive at \[\mu(F(T))=\mu(\{x_0\}\cup F(T))=\mu(\overline{\co}(\{x_0\}\cup F(T)))=\mu(T).\] The latter means that $f\f(\,\mu(F(T)),\mu(T)\g)\in-P$, in view of the definition of the class $\Phi$. We reached the contradiction, since $P$ is pointed. Consequently, $T$ must be compact.\par By virtue of the Dugundji Extension Theorem the domain $T$ is an absolute extensor for the class of metrizable spaces. Therefore, the set-valued map $F\colon T\map T$ must have at least one fixed point $x\in T$, in view of \cite[Th.7.4]{gorn2}. Moreover, $\fix(F)$ forms a closed subset of the compact domain $T$.
\end{proof}

The corresponding continuation variant of the above fixed point theorem  contains the following:

\begin{theorem}\label{fixed2}
Let $X$ be a nonempty closed convex and bounded subset of a Fr\'echet space $\mathbb{F}$ and $\mu\colon{\mathcal B}(\mathbb{F})\to P$ an MNC on $\mathbb{F}$ within the meaning of Definition \ref{mnc}, which has an additional property of being monotone. Assume that $U$ is relatively open in $X$ and its closure is a retract of $X$. Assume further that $F\colon\overline{U}\map X$ is an admissible set-valued map and for some $x_0\in U$ the following two conditions are satisfied:
\begin{equation}\label{cond2}
\exists\,f\in\w{\Phi}\,\forall\,\Omega\subset U\;\;\overline{\Omega}\,\text{ noncompact}\Rightarrow f\f(\,\mu(F(\Omega)),\mu(\Omega)\g)\in P\setminus\{0\},
\end{equation}
where \[\w{\Phi}:=\f\{f\colon P^2\to(E,\geqslant)\colon\{(x,y)\in P^2\colon x-y\in P\}\subset f^{-1}(-P)\g\},\] and
\begin{equation}\label{LS}
x\not\in(1-\lambdaup)x_0+\lambdaup F(x)\text{ on }\overline{U}\setminus U\text{ for all }\lambdaup\in(0,1).
\end{equation}
Then $\fix(F)$ is nonempty and compact.
\end{theorem}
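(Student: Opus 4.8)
The plan is to split the argument into a measure-theoretic compression, modelled on the proof of Theorem \ref{fixed}, followed by a topological extraction of the fixed point on the retract $\overline{U}$. Throughout set $h(x,\lambda):=(1-\lambda)x_0+\lambda F(x)$ and let $\Sigma:=\{x\in\overline{U}\colon x\in h(x,\lambda)\text{ for some }\lambda\in[0,1]\}$ be the solution set of this admissible homotopy. If $\fix(F)$ meets $\overline{U}\setminus U$ we are already done, so I would assume the contrary; then, reading \eqref{LS} together with the endpoint cases $\lambda=0$ (which forces $x=x_0\in U$) and $\lambda=1$ (which would place $x$ in $\fix(F)$), one sees that $\Sigma$ cannot touch $\overline{U}\setminus U$, i.e. $\Sigma\subset U$.

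The first genuine step is to prove that $\overline{\Sigma}$ is compact, and here the hypotheses on $\mu$ do all the work. Every $x\in\Sigma$ is a convex combination of $x_0$ and a point of $F(\Sigma)$, whence $\Sigma\subset\overline{\co}(\{x_0\}\cup F(\Sigma))$. Applying monotonicity of $\mu$ and then axioms (iii), (ii) and (i) of Definition \ref{mnc} yields $\mu(\Sigma)\leqslant\mu(F(\Sigma))$, that is $\mu(F(\Sigma))-\mu(\Sigma)\in P$. Were $\overline{\Sigma}$ noncompact, condition \eqref{cond2} would furnish an $f\in\w{\Phi}$ with $f(\mu(F(\Sigma)),\mu(\Sigma))\in P\setminus\{0\}$; but the pair $(\mu(F(\Sigma)),\mu(\Sigma))$ lies in the defining set $\{(x,y)\in P^2\colon x-y\in P\}$ of $\w{\Phi}$, so simultaneously $f(\mu(F(\Sigma)),\mu(\Sigma))\in-P$, contradicting the pointedness of $P$. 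This is precisely where the enlarged class $\w{\Phi}$ and the added monotonicity of $\mu$ become indispensable: unlike the fundamental-set identity exploited in Theorem \ref{fixed}, the homotopy only delivers the inequality $\mu(\Sigma)\leqslant\mu(F(\Sigma))$, so the pair sits off the diagonal $\Delta_{P^2}$. Knowing $\overline{\Sigma}$ compact, I would then note that $\Sigma$ is closed, being the image of the closed set $\{(x,\lambda)\colon x\in h(x,\lambda)\}$ under the projection along the compact factor $[0,1]$ (using that the graph of the admissible map $F$ is closed); hence $\Sigma$ is a compact subset of $U$, separated from the closed set $\overline{U}\setminus U$. A Urysohn function $\varphi\colon\overline{U}\to[0,1]$ with $\varphi\equiv1$ on $\Sigma$ and $\varphi\equiv0$ on $\overline{U}\setminus U$ lets me replace $F$ by the admissible map $\Psi(x):=(1-\varphi(x))x_0+\varphi(x)F(x)$, whose fixed points are exactly those of $\fix(F)\cap\Sigma$: indeed $x\in\Psi(x)$ forces $x\in\Sigma$, hence $\varphi(x)=1$ and $x\in F(x)$.

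The closing step is the topological one. Since $\overline{U}$ is a retract of the convex set $X$, the Dugundji Extension Theorem makes it an absolute extensor for metrizable spaces, so the fixed point theory for admissible maps used in Theorem \ref{fixed} (\cite[Th.7.4]{gorn2}, together with its index and homotopy refinements in \cite{gorn}) is available on $\overline{U}$. The homotopy $h$ is admissible and condensing, it is fixed-point free on $\overline{U}\setminus U$ by \eqref{LS}, and its solution set $\Sigma$ is compact; these are exactly the data needed to conclude, by homotopy invariance, that $F$ inherits the fixed point property of the constant map $x\mapsto x_0$ (which has the fixed point $x_0\in U$), so $\fix(F)\neq\emptyset$. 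Finally $\fix(F)\subset\Sigma$ (take $\lambda=1$) is contained in a compact set and is closed by the closed graph of $F$, hence compact.

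I expect the real obstacle to be this last stage rather than the measure-theoretic one. Because $\overline{U}$ need not be convex and the homotopy may leave it, one cannot simply extend $\Psi$ to a self-map of $X$ and invoke Theorem \ref{fixed}: a retraction does not decrease $\mu$, so the global condensingness of any such extension is lost, and the reduction to the non-continuation theorem breaks down at exactly that point. It is therefore the retract hypothesis together with \eqref{LS} that must be leveraged — through the admissible-map index on the absolute extensor $\overline{U}$ — to guarantee that the fixed point produced is a \emph{genuine} fixed point of $F$ lying in $U$, and not an artifact of the retraction.
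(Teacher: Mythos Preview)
Your measure--of--noncompactness step is essentially the same as the paper's: you derive $\mu(\Sigma)\leqslant\mu(F(\Sigma))$ from $\Sigma\subset\overline{\co}(\{x_0\}\cup F(\Sigma))$ and then use the definition of $\w{\Phi}$ together with pointedness of $P$ to force compactness. The paper does the same computation with the set $T\cap U$ in place of your $\Sigma$, where $T$ is the minimal fundamental set of an auxiliary operator $\w{F}\colon X\map X$.

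The genuine gap is your topological extraction of the fixed point. You end by invoking ``index and homotopy refinements in \cite{gorn}'' on the retract $\overline{U}$, but never write down a map to which \cite[Th.7.4]{gorn2} applies: your $\Psi$ sends $\overline{U}$ into $X$, not into $\overline{U}$, and you explicitly reject composing with the retraction. More importantly, your stated reason for rejecting the extension route is wrong. The paper does precisely what you dismiss: following \cite[Th.3]{pietkun2} it builds an admissible self-map $\w{F}\colon X\map X$ (out of $F$, the retraction $r\colon X\to\overline{U}$ and a Urysohn cut-off) and runs the fundamental-set machinery of Theorem~\ref{fixed} on $\w{F}$. The point you miss is that global condensingness of $\w{F}$ is never needed. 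The minimal fundamental set satisfies
\[
T=\overline{\co}\big(\w{F}(T)\cup\{x_0\}\big)=\overline{\co}\big(F(T\cap U)\cup\{x_0\}\big),
\]
because by construction $\w{F}$ agrees with $x_0$ off $U$ and takes values in $\co(\{x_0\}\cup F(\cdot))$ on $U$. Hence only the MNC inequality for $F$ on $T\cap U\subset U$ is invoked, exactly as permitted by \eqref{cond2}. Once $\overline{T\cap U}$ is compact, $T$ itself is compact (being the closed convex hull of a compact set together with $x_0$), and $\w{F}\colon T\map T$ is an admissible self-map of a compact convex set, so \cite[Th.7.4]{gorn2} applies directly. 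The boundary condition \eqref{LS} then guarantees $\fix(\w{F})=\fix(F)$.

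In short: the retraction does spoil condensingness of the extension, but the fundamental-set argument never asks for it---it only asks for control of $F$ on $T\cap U$. Replace your index/homotopy paragraph by this extension-plus-fundamental-set argument and the proof closes.
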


\begin{proof}
Keeping the notation and notions contained in the proof of \cite[Th.3.]{pietkun2}, consider a family $\{T_\alpha\}_{\alpha\in A}$ of all fundamental subsets of previously defined multimap $\w{F}\colon X\map X$ containing $x_0$. Let $T:=\bigcap\limits_{\alpha\in A}T_\alpha$. As we have noted previously, $T=\overline{\co}(\w{F}(T)\cup\{x_0\})$.\par If $\overline{T\cap U}$ is noncompact, then $f(\mu(F(T\cap U)),\mu(T\cap U))\in P\setminus\{0\}$, by \eqref{cond2}. On the other hand \[T\cap U\subset T=\overline{\co}(\w{F}(T)\cup\{x_0\})=\overline{\co}(F(T\cap U)\cup\{x_0\}),\] which means that $\mu(F(T\cap U))-\mu(T\cap U)\in P$. Thus, $f(\mu(F(T\cap U)),\mu(T\cap U))\in-P$, by the very definition of the class $\w{\Phi}$. Therefore, $\overline{T\cap U}$ must be compact. Since $F$ is admissible, $T$ is compact as well. As we have seen in the proof of \cite[Th.3.]{pietkun2}, $\w{F}\colon T\map T$ is also an admissible multimap.\par Once more, in view of \cite[Th.7.4]{gorn2}, the set-valued map $\w{F}\colon T\map T$ must have at least one fixed point $x\in T$. Observe that $\fix(\w{F})=\fix(F)$.
\end{proof}

\begin{example}\label{ex}
Let $E:=\R{\mathbb{N}}$ be a linear space of all scalar valued sequences endowed with the natural pointwise order and $P=\R{\mathbb{N}}_+$. Define $f\colon P^2\to(E,\geqslant)$ in the following way
\begin{equation}\label{f}
f\f(\z{x_n},\z{y_n}\g):=\z{k_ny_n-x_n},
\end{equation}
where $\z{k_n}\in(0,1)^{\mathbb{N}}$. Then $f\in\w{\Phi}\subset\Phi$.
\end{example}

\section{Weak compactness in $\loc$}
The most known up to date result regarding weak compactness in the Bochner space $L^1(E)$ is the following conclusion stemming from the celebrated Rosenthal's dichotomy theorem:
\begin{theorem}[\protect{\cite[Cor.9]{ulger}}]\label{ulger}
Let $(\Omega,\Sigma,\mu)$ be a finite measure space with $\mu$ being a nonatomic measure on $\Sigma$. Let $A$ be a uniformly $p$-integrable subset of $L^p(\Omega,E)$ with $p\in[1,\infty)$. Assume that for a.a. $\omega\in\Omega$, the set $\{f(\omega)\colon f\in A\}$ is relatively weakly compact in $E$. Then $A$ is relatively weakly compact.
\end{theorem}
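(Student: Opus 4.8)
\noindent The plan is to deduce relative weak compactness from the Eberlein--Šmulian theorem, i.e.\ to show that an arbitrary sequence $(f_n)$ in $A$ admits a weakly convergent subsequence in $L^p(\Omega,E)$. Since each $f_n$ is Bochner measurable it is essentially separably valued, so the closed linear span of $\bigcup_n\esssupp f_n$ is separable and I may replace $E$ by this subspace; thus I assume throughout that $E$ is separable, which furnishes a countable total family $\{x_m^*\}_{m\geqslant 1}\subset E^*$ and guarantees that the weak topology is metrizable on each weakly compact subset of $E$. Uniform $p$-integrability together with finiteness of $\mu$ forces $A$ to be norm-bounded, so $(f_n)$ is a bounded sequence of the Banach space $L^p(\Omega,E)$. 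Invoking Rosenthal's dichotomy, I obtain a subsequence that is either equivalent to the unit vector basis of $\ell^1$ or weakly Cauchy; the nonatomicity of $\mu$ is what legitimizes this splitting in the present functional setting.

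The scalar layer of the argument comes next. For each fixed $m$ the scalar sequence $\langle x_m^*,f_n(\cdot)\rangle$ is bounded in $L^p(\Omega)$ and uniformly $p$-integrable, since $|\langle x_m^*,f_n(\omega)\rangle|\leqslant\|x_m^*\|\,|f_n(\omega)|$; hence it is relatively weakly compact in $L^p(\Omega)$, by reflexivity when $p>1$ and by the Dunford--Pettis theorem when $p=1$. A diagonal extraction then produces a single subsequence $(f_{n_k})$ for which $\langle x_m^*,f_{n_k}(\cdot)\rangle\rightharpoonup\varphi_m$ weakly in $L^p(\Omega)$ for every $m$, with $\varphi_m\in L^p(\Omega)$.

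It remains to assemble these scalar limits into a genuine element $f\in L^p(\Omega,E)$, and this is precisely where the pointwise hypothesis does the decisive work. For $\mu$-a.a.\ $\omega$ the set $K(\omega):=\overline{\co}^{\,w}\{g(\omega):g\in A\}$ is weakly compact and, by separability, metrizable in the weak topology; the prescribed values $\langle x_m^*,f(\omega)\rangle=\varphi_m(\omega)$ cut out of $K(\omega)$ a nonempty weakly compact slice. A measurable selection theorem then delivers a measurable $f$ with $f(\omega)\in K(\omega)$ realizing these values, and uniform $p$-integrability places $f$ in $L^p(\Omega,E)$. This $f$ is the weak limit of $(f_{n_k})$, which in turn excludes the $\ell^1$ alternative: a sequence equivalent to the $\ell^1$-basis spans a Schur subspace and so has no weakly convergent subsequence, whereas $(f_{n_k})$ now does.

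The hard part is the final verification that $f_{n_k}\rightharpoonup f$ in $L^p(\Omega,E)$, and not merely against the countable total family. Because $E^*$ is not assumed to have the Radon--Nikodym property, the dual of $L^p(\Omega,E)$ is the larger space $L^q_{w^*}(\Omega,E^*)$ of weak\textsuperscript{$*$}-measurable $E^*$-valued functions rather than $L^q(\Omega,E^*)$, so convergence tested against $\{x_m^*\}$ does not by itself yield weak convergence. Bridging this gap is the technical core: one must combine the almost-everywhere weak convergence of suitable convex combinations $g_k\in\co\{f_{n_j}:j\geqslant k\}$ with uniform $p$-integrability, so that for each tensor density $\psi(\cdot)x_m^*$ a vector-valued Vitali argument passes the scalar limit through the integral, and then approximate an arbitrary weak\textsuperscript{$*$}-measurable density by such simple densities while controlling the tails uniformly. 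Securing the measurability of the selection $f$ and this approximation step is where the three hypotheses---finite nonatomic $\mu$, uniform $p$-integrability, and pointwise relative weak compactness---must be used in concert, and I expect it to be the main obstacle.
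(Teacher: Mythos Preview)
The paper does not prove this statement at all: it is quoted verbatim as \cite[Cor.~9]{ulger} and used as a black box (the paper's own contributions are Theorems~\ref{rwc} and~\ref{extension}, which \emph{extend} \"Ulger's result to $\sigma$-finite measures and to $\loc$). So there is no ``paper's proof'' to compare against; you are attempting to reprove a cited result.

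That said, your architecture---Eberlein--\v{S}mulian, reduction to separable range, Rosenthal's dichotomy, exclusion of the $\ell^1$ branch---is exactly the skeleton of \"Ulger's original argument, so the high-level plan is sound. But the proposal has a real gap in the middle. You extract weak $L^p(\Omega)$-limits $\varphi_m$ of the scalar sequences $\langle x_m^*,f_{n_k}(\cdot)\rangle$ and then assert that for a.a.\ $\omega$ the values $(\varphi_m(\omega))_m$ are realized by a single point of $K(\omega)$. Weak convergence in $L^p(\Omega)$ gives no pointwise information, so there is no reason the $\varphi_m(\omega)$ should be mutually consistent at a fixed $\omega$; the ``nonempty slice'' claim is unjustified. \"Ulger circumvents this by producing, from the weakly Cauchy subsequence, convex combinations that converge \emph{pointwise a.e.}\ in the weak topology of $E$ (this is where nonatomicity and the pointwise relative weak compactness are genuinely used), and only then identifies the limit function. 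Your measurable-selection step tries to shortcut this and does not work as written.

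You also correctly identify the dual-space issue (testing against all of $L^q_{w^*}(\Omega,E^*)$, not just simple tensors) as ``the main obstacle'' and leave it open; that is honest, but it means the proposal is a plan rather than a proof. If you want to complete it, replace the $\varphi_m$-selection step by a Mazur/Koml\'os-type passage to a.e.\ weakly convergent convex combinations, and then use uniform $p$-integrability plus Vitali to upgrade to weak convergence in $L^p(\Omega,E)$.
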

With the aid of the Grothendieck's lemma and the following generalization of the Riesz representation theorem 
\begin{theorem}[\protect{\cite[Th.3.2.]{uhl}}]\label{uhl}
Let $p\in[1,\infty)$ and $p^{-1}+q^{-1}=1$. If $(\Omega,\Sigma,\mu)$ is a $\sigma$-finite measure space and $E$ is a Banach space such that $E^*$ has the Radon-Nikodym property, then $L^p(\Omega,E)$ and $L^q(\Omega,E^*)$ are isometrically isomorphic under the correspondence $l\in L^p(\Omega,E)^*\leftrightarrow g\in L^q(\Omega,E^*)$ defined by \[l(f)=\int_\Omega\langle f(\omega),g(\omega)\rangle\,\mu(d\omega),\;\;\;f\in L^p(\Omega,E).\]
\end{theorem}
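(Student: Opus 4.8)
The plan is to prove the isometric isomorphism $L^p(\Omega,E)^*\cong L^q(\Omega,E^*)$ asserted by the theorem by splitting it into an elementary embedding and a surjectivity statement, reserving the Radon--Nikodym hypothesis on $E^*$ for the latter. First I would define, for $g\in L^q(\Omega,E^*)$, the functional $l_g(f):=\int_\Omega\langle f(\omega),g(\omega)\rangle\,\mu(d\omega)$; applying Hölder's inequality to the integrand $|\langle f(\omega),g(\omega)\rangle|\leqslant|f(\omega)|\,\|g(\omega)\|_{E^*}$ gives $|l_g(f)|\leqslant\|g\|_q\|f\|_p$, so that $l_g\in L^p(\Omega,E)^*$ with $\|l_g\|\leqslant\|g\|_q$, and $g\mapsto l_g$ is linear. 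For the reverse norm inequality (hence injectivity and the isometry) I would use a measurable selection: since a Bochner-measurable $g$ is essentially separably valued, for each $\eps>0$ one can choose measurably a unit vector $x(\omega)\in E$ with $\langle x(\omega),g(\omega)\rangle\geqslant(1-\eps)\|g(\omega)\|_{E^*}$, and then test $l_g$ against $f(\omega)=x(\omega)\|g(\omega)\|_{E^*}^{q-1}$, which for $p\in(1,\infty)$ satisfies $\|f\|_p^p=\int_\Omega\|g\|_{E^*}^q\,d\mu$ (using $(q-1)p=q$) and yields $\|l_g\|\geqslant(1-\eps)\|g\|_q$; the case $p=1$, $q=\infty$ is handled analogously with essential suprema.

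The substance of the theorem is surjectivity, and here I would first reduce to finite $\mu$ by $\sigma$-finite exhaustion. Assuming $\mu(\Omega)<\infty$ and fixing $l\in L^p(\Omega,E)^*$, I would construct an $E^*$-valued set function $F\colon\Sigma\to E^*$ through $\langle x,F(A)\rangle:=l(x\chi_A)$ for $x\in E$ and $A\in\Sigma$. The bound $|\langle x,F(A)\rangle|\leqslant\|l\|\,|x|\,\mu(A)^{1/p}$ shows $F(A)\in E^*$ with $\|F(A)\|_{E^*}\leqslant\|l\|\,\mu(A)^{1/p}$; finite additivity is immediate, and both countable additivity and $\mu$-continuity follow from the continuity of $l$ combined with dominated convergence. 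The decisive preliminary is that $F$ has finite variation: given a finite measurable partition $\{A_i\}$, I would pick near-norming unit vectors $x_i$ with $l(x_i\chi_{A_i})\geqslant(1-\eps)\|F(A_i)\|_{E^*}$ and test $l$ against the single simple function $\sum_i x_i\chi_{A_i}$, whose $L^p$ norm is exactly $\mu(\Omega)^{1/p}$ because the supports are disjoint; this gives $(1-\eps)\sum_i\|F(A_i)\|_{E^*}\leqslant\|l\|\,\mu(\Omega)^{1/p}$ uniformly over partitions, so $|F|(\Omega)\leqslant\|l\|\,\mu(\Omega)^{1/p}<\infty$.

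Having secured bounded variation and $\mu$-continuity, I would invoke the Radon--Nikodym property of $E^*$ to produce a Bochner-integrable derivative $g=dF/d\mu\in L^1(\Omega,E^*)$ with $F(A)=\int_A g\,d\mu$. Evaluating on simple functions then gives $l(\sum_i x_i\chi_{A_i})=\sum_i\langle x_i,F(A_i)\rangle=\int_\Omega\langle f,g\rangle\,d\mu$, so $|\int_\Omega\langle f,g\rangle\,d\mu|\leqslant\|l\|\,\|f\|_p$ on simple functions. To upgrade the a priori integrability $g\in L^1$ to $g\in L^q$ with $\|g\|_q=\|l\|$ I would rerun the selection argument of the first paragraph on the truncations $g\,\chi_{\{\|g\|_{E^*}\leqslant n\}}$: the inequality above forces $\int_{\{\|g\|_{E^*}\leqslant n\}}\|g\|_{E^*}^q\,d\mu\leqslant\|l\|^q$ uniformly in $n$, and monotone convergence yields $\|g\|_q\leqslant\|l\|$; density of simple functions in $L^p(\Omega,E)$ then extends $l=l_g$ to all of $L^p(\Omega,E)$. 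Finally I would glue the uniquely determined local derivatives obtained on an increasing sequence $\Omega_n\uparrow\Omega$ of finite-measure sets into a single $g$, the uniform bound $\|g\chi_{\Omega_n}\|_q\leqslant\|l\|$ guaranteeing via monotone convergence that $g\in L^q(\Omega,E^*)$.

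I expect the main obstacle to be twofold and concentrated in the surjectivity half: first, recognizing that finite variation of $F$ is obtained not from the crude term-by-term estimate $\|F(A_i)\|_{E^*}\leqslant\|l\|\mu(A_i)^{1/p}$ (whose sum diverges under refinement when $p>1$) but from a single combined test function, and second, passing from the mere Bochner integrability delivered by the Radon--Nikodym property to the sharp $L^q$ membership and norm equality, which requires the measurable-selection/truncation argument rather than any soft functional-analytic principle.
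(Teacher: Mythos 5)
This theorem is not proved in the paper at all: it is imported verbatim from Gretsky--Uhl \cite[Th.\,3.2]{uhl} as a known representation theorem, so there is no internal argument to compare against. Your proposal is, in effect, a correct reconstruction of the classical proof (the one in Gretsky--Uhl, and in Diestel--Uhl's \emph{Vector Measures} for $p=1$): the easy isometric embedding $g\mapsto l_g$ via H\"older plus a countable near-norming selection, the set function $\langle x,F(A)\rangle:=l(x\chi_A)$, bounded variation via the single combined test function $\sum_i x_i\chi_{A_i}$ of norm $\mu(\Omega)^{1/p}$ (you correctly identify that the crude bound $\|F(A_i)\|_{E^*}\leqslant\|l\|\mu(A_i)^{1/p}$ is useless under refinement for $p>1$), the Radon--Nikodym property of $E^*$ to produce $g\in L^1(\mu,E^*)$, and the truncation argument to upgrade to $g\in L^q$ with $\|g\|_q\leqslant\|l\|$. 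Two small points should be made explicit in a full write-up: first, the inequality $\bigl|\int_\Omega\langle f,g\rangle\,d\mu\bigr|\leqslant\|l\|\,\|f\|_p$ is obtained only for simple $f$, and your test function $x(\omega)\|g(\omega)\|_{E^*}^{q-1}\chi_{\{\|g\|_{E^*}\leqslant n\}}$ is bounded measurable but not simple, so you need a routine extension step (approximate by uniformly bounded simple functions and dominate by a constant times $\|g(\omega)\|_{E^*}\in L^1$); second, in the $\sigma$-finite gluing you should note that the local derivatives are consistent a.e.\ on overlaps because $F(A)=\int_A g\,d\mu$ for all measurable $A\subset\Omega_n$ determines $g$ uniquely there, and the endpoint case $p=1$, $q=\infty$ requires its own (easy) positive-measure-set variant of the norming argument rather than the exponent manipulation $(q-1)p=q$. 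These are routine completions, not gaps in the method.
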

\noindent we are able to prove Theorem \ref{ulger} in the context of a $\sigma$-finite measure space.
\begin{theorem}\label{rwc}
Let $p\in[1,\infty)$. Let $(\Omega,\Sigma,\mu)$ be a $\sigma$-finite measure space with $\mu$ being a nonatomic measure on $\Sigma$. Assume that $E$ is a Banach space such that $E^*$ has the Radon-Nikodym property. If $A$ is a uniformly $p$-integrable subset of $L^p(\Omega,E)$ with relatively weakly compact cross-sections $A(\omega)$ for a.a. $\omega\in\Omega$, then $A$ is relatively weakly compact.
\end{theorem}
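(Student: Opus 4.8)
The plan is to bootstrap the finite-measure statement of Theorem~\ref{ulger} to the $\sigma$-finite setting by exhausting $\Omega$ with sets of finite measure and gluing the local information through Grothendieck's double limit criterion. Using $\sigma$-finiteness I would first fix an increasing sequence $\Omega_1\subseteq\Omega_2\subseteq\cdots$ with $\mu(\Omega_k)<\infty$ and $\bigcup_{k}\Omega_k=\Omega$. Uniform $p$-integrability makes $A$ norm bounded in $L^p(\Omega,E)$; moreover, since $E^*$ has the Radon--Nikodym property and $\mu$ is $\sigma$-finite, Theorem~\ref{uhl} identifies $L^p(\Omega,E)^*$ isometrically with $L^q(\Omega,E^*)$, where $p^{-1}+q^{-1}=1$. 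Consequently every functional tested against $A$ has the form $f\mapsto\int_\Omega\langle f(\omega),g(\omega)\rangle\,\mu(d\omega)$ for some $g\in L^q(\Omega,E^*)$, and by Grothendieck's lemma it suffices to check that for arbitrary sequences $(f_n)\subseteq A$ and $(g_m)$ in the closed unit ball of $L^q(\Omega,E^*)$ the two iterated limits of $L_{nm}:=\int_\Omega\langle f_n(\omega),g_m(\omega)\rangle\,\mu(d\omega)$ agree whenever both exist.

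On each fixed $\Omega_k$ I would invoke the hypothesis in its original, finite-measure form. The restricted family $A_k:=\{f|_{\Omega_k}\colon f\in A\}$ is a uniformly $p$-integrable subset of $L^p(\Omega_k,E)$ whose cross-sections remain relatively weakly compact for a.a.\ $\omega\in\Omega_k$, so Theorem~\ref{ulger} applies and $A_k$ is relatively weakly compact in $L^p(\Omega_k,E)$. Since $\|g_m|_{\Omega_k}\|_{L^q(\Omega_k,E^*)}\leqslant\|g_m\|_q\leqslant1$, Grothendieck's criterion now yields the interchange of iterated limits for the truncated arrays $L^{(k)}_{nm}:=\int_{\Omega_k}\langle f_n,g_m\rangle\,d\mu$ (after the routine diagonal extraction in $n$ and $m$, which is legitimate because passing to subsequences leaves the two iterated limits of $L_{nm}$ unchanged).

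The decisive estimate is the uniform smallness of the tails. By Hölder's inequality $|L_{nm}-L^{(k)}_{nm}|\leqslant\|f_n\mathbf{1}_{\Omega\setminus\Omega_k}\|_p\,\|g_m\|_q\leqslant\sup_{f\in A}\|f\mathbf{1}_{\Omega\setminus\Omega_k}\|_p=:\eps_k$, and uniform $p$-integrability forces $\eps_k\to0$ as $k\to\infty$, \emph{uniformly} in $n$ and $m$. Hence the iterated limits of $(L_{nm})$ differ from those of $(L^{(k)}_{nm})$ by at most $\eps_k$; since the latter coincide for every $k$, letting $k\to\infty$ shows the two iterated limits of $(L_{nm})$ are equal. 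Grothendieck's lemma then delivers the relative weak compactness of $A$ asserted in Theorem~\ref{rwc}.

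I expect the tail control to be the crux, and it is the only genuinely new ingredient compared with Ülger's finite-measure proof: the localisation to $\Omega_k$ and the duality furnished by Theorem~\ref{uhl} are essentially bookkeeping, but the interchange of limits can be transported from the pieces to all of $\Omega$ only if the contribution ``at infinity'' is uniformly negligible. This is precisely the tightness that must be built into uniform $p$-integrability over an infinite measure space, and it cannot be dropped: for a fixed unit vector $e\in E$ the translates $f_n=\mathbf{1}_{[n,n+1]}e$ in $L^1(\R{},E)$ have two-point (hence weakly compact) cross-sections yet admit no weakly convergent subsequence. The Radon--Nikodym hypothesis on $E^*$ enters only to guarantee, via Theorem~\ref{uhl}, that $L^q(\Omega,E^*)$ exhausts the dual of $L^p(\Omega,E)$, so that testing against these $g$ really detects weak compactness.
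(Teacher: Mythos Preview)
Your argument is correct and shares the essential scaffolding with the paper's proof---an exhaustion $(\Omega_k)$, \"Ulger's finite-measure theorem on each piece, the RNP duality $L^p(\Omega,E)^*\cong L^q(\Omega,E^*)$ from Theorem~\ref{uhl}, and the uniform tail control $\sup_{f\in A}\|f\mathbf{1}_{\Omega\setminus\Omega_k}\|_p\to 0$---but it assembles these ingredients differently. The paper avoids the double limit criterion: instead it shows directly that the zero-extensions $\tilde A_k=\{f\mathbf{1}_{\Omega_k}:f\in A\}$ are relatively weakly compact in $L^p(\Omega,E)$ (a short sequential argument using the duality and Theorem~\ref{ulger} on $\Omega_k$), observes that uniform $p$-integrability gives $A\subset\tilde A_{k_0}+B(0,\eps)$ for suitably large $k_0$, and then invokes Grothendieck's lemma in the form ``a bounded set contained, for every $\eps>0$, in a relatively weakly compact set plus an $\eps$-ball is relatively weakly compact.'' Your route via the double limit criterion is equally valid, but the ``routine diagonal extraction'' you mention carries more weight than the phrasing suggests: one must arrange, simultaneously for all $k$, that both iterated limits of $L^{(k)}_{nm}$ exist, which requires a layered diagonalisation over $n$, $m$, and $k$. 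The paper's approximation approach sidesteps this bookkeeping entirely and is arguably cleaner; your argument, on the other hand, makes the role of the tightness condition more transparent, since the uniform tail bound feeds directly into the limit comparison rather than into a set inclusion.
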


\begin{proof}
Let $\{\Omega_k\}_{k=1}^\infty$ be an increasing family such that $\Omega=\bigcup\limits_{k=1}^\infty\Omega_k$ and $\mu(\Omega_k)<\infty$. Clearly, $\f(\Omega_k,\Omega_k\cap\Sigma,\res{\mu}{\Omega_k\cap\Sigma}\g)$ is a finite measure space with $\res{\mu}{\Omega_k\cap\Sigma}$ being nonatomic measure on $\Omega_k\cap\Sigma$. In view of {\cite[Cor.9]{ulger}}, $A_k:=\res{A}{\Omega_k}$ is a relatively weakly compact subset of $L^p\f(\Omega_k,\Omega_k\cap\Sigma,\res{\mu}{\Omega_k\cap\Sigma};E\g)$. Put $\w{A}_k:=\{f{\bf 1}_{\Omega_k}\colon f\in A_k\}$. Consider an arbitrary $\z{f_n{\bf 1}_{\Omega_k}}\subset\w{A}_k$. We may assume, passing to a subsequence if necessary, that $f_n\xrightharpoonup[n\to\infty]{L^p(\Omega_k,E)}f\in A_k$. Take $\zeta\in L^p(\Omega,E)^*$. In view of \cite[Th.3.2.]{uhl}, there exists $\xi\in L^{\frac{p}{p-1}}(\Omega,E^*)$ such that 
\begin{align*}
\langle \zeta,f_n{\bf 1}_{\Omega_k}\rangle&=\int\limits_\Omega\langle\xi(\omega),f_n{\bf 1}_{\Omega_k}(\omega)\rangle\,d\mu\\&=\int\limits_{\Omega_k}\langle\res{\xi}{\Omega_k}(\omega),f_n(\omega)\rangle\,d\mu\xrightarrow[n\to\infty]{}\int\limits_{\Omega_k}\langle\res{\xi}{\Omega_k}(\omega),f(\omega)\rangle\,d\mu=\int\limits_{\Omega}\langle\xi(\omega),\res{f}{\Omega_k}(\omega)\rangle\,d\mu
\end{align*}
with $\res{\xi}{\Omega_k}\in L^{\frac{p}{p-1}}(\Omega_k,E^*)$. Hence, $f_n{\bf 1}_{\Omega_k}\xrightharpoonup[n\to\infty]{L^p(\Omega,E)}f{\bf 1}_{\Omega_k}\in\w{A}_k$. In other words, the set $\w{A}_k$ is relatively weakly compact in $L^p(\Omega,E)$.\par Since $\mu(\Omega\setminus\Omega_k)\xrightarrow[k\to\infty]{}0$, we have \[\lim_{k\to\infty}\sup_{f\in A}\int\limits_{\Omega\setminus\Omega_k}|f(\omega)|^p\,d\mu=0.\] The latter means that for all $\eps>0$ we can find $k_0\in\mathbb{N}$ such that $\int\limits_{\Omega\setminus\Omega_{k_0}}|f(\omega)|^p\,d\mu<\eps$ for all $f\in A$. Fix $\eps>0$. One easily sees that \[\sup_{f\in A}\f\Arrowvert f-\f(\res{f}{\Omega_{k_0}}\g){\bf 1}_{\Omega_{k_0}}\g\Arrowvert_p^p=\sup_{f\in A}\int\limits_{\Omega\setminus\Omega_{k_0}}|f(\omega)|^p\,d\mu<\eps.\] Consequently, $A\subset\w{A}_{k_0}+B(0,\eps)$ with $\w{A}_{k_0}$ being relatively weakly compact in $L^p(\Omega,E)$. By virtue of Grothendieck's Lemma, the set $A$ must be relatively weakly compact. 
\end{proof}

Let $\Omega\subset\R{N}$ be open (not necessarily bounded) and ${\mathfrak L}(\R{N})$ be the Lebesgue $\sigma$-field. Let $\rho\colon{\mathfrak L}(\R{N})\times{\mathfrak L}(\R{N})\to\R{}_+$ be a pseudometric, given by $\rho(A,B):=\ell(A\triangle B)$. Assume once and for all that $\Lambda\colon\Omega\to{\mathfrak L}(\R{N})$ is $\rho$-continuous and maps bounded sets into bounded subsets of $\Omega$. \par By the exhaustion of the domain $\Omega$ we mean any increasing sequence $\z{\Omega_n}$ of open bounded subsets, which cover $\Omega$. In this instance, the family of rings $\{\w{\Omega}_n\}_{n=1}^\infty$ with $\w{\Omega}_n:=\cl_{\Omega}(\Omega_n)\setminus\Omega_{n-1}$ poses a compact partitioning of the set $\Omega$. \par Our standing hypothesis on the space $E$ is the following:
\begin{itemize}
\item[$(\mathbb{E})$] $E$ and the bidual $E^{**}$ are strictly convex Banach spaces, while the dual $E^*$ has the Radon-Nikodym property.  
\end{itemize}

\begin{remark}
Reflexive Banach spaces meet assumption $(\mathbb{E})$ $($possibly after Troyanski's {re\-norming}$)$.
\end{remark}

\begin{lemma}\label{iso}
Let $\z{\Omega_n}$ be an exhaustion of $\Omega$. The spaces $\eL{p}{\Omega}$ and $\prod\limits_{n=1}^\infty L^p(\w{\Omega}_n,E)$ are isomorphic.
\end{lemma}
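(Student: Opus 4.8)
The plan is to take the evident restriction operator as the candidate isomorphism and to verify that it matches the two Fréchet topologies seminorm by seminorm. I would set
\[
T\colon\eL{p}{\Omega}\to\prod_{n=1}^\infty L^p(\w{\Omega}_n,E),\qquad T(f):=\z{\res{f}{\w{\Omega}_n}}.
\]
This is linear and well defined: each ring $\w{\Omega}_n\subset\cl_\Omega(\Omega_n)$ is a compact subset of $\Omega$, so the restriction of a locally $p$-integrable $f$ belongs to $L^p(\w{\Omega}_n,E)$.

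Next I would establish that $T$ is a bijection. Injectivity is immediate, since $\x{\w{\Omega}_n}$ covers $\Omega$ and hence $T(f)=0$ forces $f=0$ almost everywhere. For surjectivity, given $\z{g_n}$ in the product I would glue the pieces into one function by putting $f:=g_n$ on $\w{\Omega}_n$; this prescribes $f$ unambiguously off a null set because, by the compact partitioning recalled above, the rings are pairwise essentially disjoint, their overlaps being contained in the relative boundaries $\cl_\Omega(\Omega_n)\setminus\Omega_n$. The function so obtained lies in $\eL{p}{\Omega}$, since any compact $K\subset\Omega$ is contained in some $\Omega_m$, whence $\int_K|f|^p\,d\ell\<\sum_{n=1}^m\|g_n\|_p^p<\infty$, and clearly $T(f)=\z{g_n}$.

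It remains to see that $T$ is a homeomorphism. Writing $K_m:=\cl_\Omega(\Omega_m)=\bigcup_{n=1}^m\w{\Omega}_n$ — a finite union of the compact rings, hence itself compact — the topology of $\eL{p}{\Omega}$ is generated by the seminorms $p_m(f):=\|\res{f}{K_m}\|_p$, the $K_m$ being cofinal among the compact subsets of $\Omega$. Since the rings composing $K_m$ are essentially disjoint, the integral splits additively and
\[
p_m(f)^p=\int_{K_m}|f|^p\,d\ell=\sum_{n=1}^m\int_{\w{\Omega}_n}|f|^p\,d\ell=\sum_{n=1}^m\|(T f)_n\|_p^p .
\]
As the right-hand side, viewed as a function of $T(f)$, is one of the standard generating seminorms of the product topology on $\prod_{n=1}^\infty L^p(\w{\Omega}_n,E)$, the operator $T$ carries a defining family of seminorms onto a defining family; continuity of both $T$ and $T^{-1}$ is then immediate, and $T$ is the asserted isomorphism of Fréchet spaces.

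The bijectivity and the seminorm bookkeeping are routine; the single point demanding care is the essential disjointness of the rings, on which both the gluing in the surjectivity step and the additivity identity rest. This is precisely the content of the preceding remark that $\x{\w{\Omega}_n}$ is a compact partitioning of $\Omega$, i.e. that the relative boundaries $\cl_\Omega(\Omega_n)\setminus\Omega_n$ are Lebesgue negligible; granting this, the statement follows formally.
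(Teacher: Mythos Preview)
Your proof is correct and uses the same restriction map as the paper. The only difference is one of emphasis: the paper spells out strong measurability of the glued function in the surjectivity step (via simple-function approximation on each ring) while declaring the topological matching obvious, whereas you do the reverse, making the seminorm identity explicit and leaving measurability implicit.
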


\begin{proof}
Define $\Phi\colon\eL{p}{\Omega}\to\prod\limits_{n=1}^\infty L^p(\w{\Omega}_n,E)$ by $\Phi(f)=\f(\res{f}{\w{\Omega}_n}\g)_{n=1}^\infty$. The only non-obvious property of the isomorphism $\Phi$ is surjectivity. Assume that $\z{f_n}\in\prod\limits_{n=1}^\infty L^p(\w{\Omega}_n,E)$. Let $f\colon\Omega\to E$ be given by $\res{f}{\w{\Omega}_n}:=f_n$ for $\n$. Since $f_n\in L^p(\w{\Omega}_n,E)$, there exists a sequence $(g_n^k)_{k=1}^\infty$ of simple functions such that $g_n^k(x)\xrightarrow[k\to\infty]{E}f_n(x)$ for $x\in\w{\Omega}_n\setminus I_n$ with $\ell(I_n)=0$. Let $g_k\colon\Omega\to E$ be given by $g_k:=\sum\limits_{n=1}^\infty g_n^k{\bf 1}_{\w{\Omega}_n}$. Obviously, $g_k$ is countably valued and strongly measurable. Since $g_k(x)\xrightarrow[k\to\infty]{E}f(x)$ for $x\in\Omega\setminus\bigcup\limits_{n=1}^\infty I_n$ with $\ell\f(\bigcup\limits_{n=1}^\infty I_n\g)=0$, the mapping $f$ must be strongly measurable. If $K\subset\Omega$ is compact, then there is $n\in\mathbb{N}$ such that $K\subset\cl_{\Omega}(\Omega_n)$ and \[\int\limits_K|f(x)|^p\,dx\<\int\limits_{\cl_\Omega(\Omega_n)}|f(x)|^p\,dx=\sum_{k=1}^n\int\limits_{\w{\Omega}_k}|f_k(x)|^p\,dx=\sum_{k=1}^n||f_k||_{L^p(\w{\Omega}_k,E)}^p<\infty.\] Whence, $f\in\eL{p}{\Omega}$ and $\Phi(f)=\z{f_n}$. 
\end{proof}

\begin{lemma}\label{conjugate}
Assume $(\mathbb{E})$. If \[L^\infty_{c}(\Omega,E^*):=\{g\in L^\infty(\Omega,E^*)\colon\esssupp(g)\text{ is compact }\},\] then
\[\loc^*=\f\{\varphi\colon\eL{1}{\Omega}\to\R{}\colon\exists\,g\in L^\infty_c(\Omega,E^*)\ni\varphi(f)=\!\int_\Omega\langle g(x),f(x)\rangle\,dx\g\}\!.\]
\end{lemma}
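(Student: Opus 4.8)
The plan is to establish the two inclusions separately, the containment ``$\supseteq$'' being routine and the reverse containment resting on the structure of the dual of a countable product of Banach spaces. First I would check that every $g\in L^\infty_c(\Omega,E^*)$ induces a continuous functional. Writing $K:=\esssupp(g)$, which is compact, the integrand $\langle g(x),f(x)\rangle$ vanishes off $K$ and is dominated by $\|g\|_{L^\infty}|f(x)|$; as $f\in\loc$ is Bochner integrable on $K$, the map $\varphi(f)=\int_\Omega\langle g(x),f(x)\rangle\,dx$ is well defined and obeys $|\varphi(f)|\le\|g\|_{L^\infty}\int_K|f(x)|\,dx$. Taking $n$ large enough that $K\subset\cl_\Omega(\Omega_n)$, the right-hand side is a constant multiple of one of the seminorms generating the Fr\'echet topology of $\loc$, so $\varphi$ is continuous.

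For the nontrivial inclusion I would invoke Lemma \ref{iso} to identify $\loc$ with the Fr\'echet product $\prod_{n=1}^\infty L^1(\w{\Omega}_n,E)$, thereby viewing an arbitrary $\varphi\in\loc^*$ as a continuous functional on that product. The hard part will be showing that such a functional factors through finitely many coordinates. Here the product topology is decisive: a basic zero-neighbourhood $V$ restricts only the coordinates $n\le m$ for some $m$. Letting $\iota_k$ denote the $k$-th canonical injection and putting $\varphi_n:=\varphi\circ\iota_n$, for every $k>m$ the element $\iota_k(ty)$ lies in $V$ for all scalars $t$, so boundedness of $\varphi$ on $V$ forces $\varphi_k\equiv 0$. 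Since each element is the limit in the product topology of its partial sums $\sum_{n\le M}\iota_n(\res{f}{\w{\Omega}_n})$, continuity and linearity then give $\varphi(f)=\sum_{n=1}^{m}\varphi_n(\res{f}{\w{\Omega}_n})$.

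It remains to represent the finitely many coordinate functionals and reassemble them. Each ring $\w{\Omega}_n$ carries finite, hence $\sigma$-finite, Lebesgue measure, and by assumption $(\mathbb{E})$ the dual $E^*$ has the Radon--Nikodym property, so Theorem \ref{uhl} (with $p=1$, $q=\infty$) yields $g_n\in L^\infty(\w{\Omega}_n,E^*)$ satisfying $\varphi_n(h)=\int_{\w{\Omega}_n}\langle g_n(x),h(x)\rangle\,dx$. Setting $g:=\sum_{n=1}^{m}g_n{\bf 1}_{\w{\Omega}_n}$ produces an element of $L^\infty(\Omega,E^*)$ whose essential support lies in $\bigcup_{n=1}^{m}\w{\Omega}_n=\cl_\Omega(\Omega_m)$, a finite union of the compact rings and therefore compact; hence $g\in L^\infty_c(\Omega,E^*)$. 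Combining the decomposition of $\varphi$ with the coordinatewise representations, and using that the rings partition $\Omega$ up to null sets (as already exploited in Lemma \ref{iso}), delivers $\varphi(f)=\int_\Omega\langle g(x),f(x)\rangle\,dx$, which closes the argument. Beyond the factoring step, the only care needed is this null-set bookkeeping for the rings, which ensures the indicator decomposition of $g$ is unambiguous.
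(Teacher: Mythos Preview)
Your argument is correct, and it takes a genuinely different route from the paper's own proof. For the nontrivial inclusion, the paper does \emph{not} pass through the product decomposition of Lemma~\ref{iso}. Instead it restricts a given $\psi\in\loc^*$ to the dense subspace $L^1(\Omega,E)$, obtains a global representative $g\in L^\infty(\Omega,E^*)$ via Theorem~\ref{uhl}, and then argues by contradiction that $g$ must vanish outside finitely many rings $\w{\Omega}_n$: assuming otherwise, it uses the inverse duality map $J\colon E^*\to E$ to build functions $f_n\in L^1(\Omega,E)$ with $\psi(f_n)=n$ that nonetheless converge in $\loc$, contradicting continuity. This construction is where the full hypothesis $(\mathbb{E})$ enters---strict convexity of $E$ is needed so that $J$ is single-valued, and either separability or the Kadets--Klee property (after renorming in the reflexive case) is invoked to secure strong measurability of $J\circ g$.

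Your approach via the identification $\loc\cong\prod_n L^1(\w{\Omega}_n,E)$ and the standard description of the dual of a countable product as the algebraic direct sum of the duals is both shorter and structurally cleaner; the ``finitely many coordinates'' step replaces the paper's explicit contradiction with $J$. It also clarifies that only the Radon--Nikodym property of $E^*$ is actually needed here, not the strict-convexity clauses in $(\mathbb{E})$. What the paper's approach buys, by contrast, is a single global $g$ from the outset (no reassembly of coordinatewise pieces), at the cost of the duality-map machinery. One minor point: you inherit from the paper the claim that $\cl_\Omega(\Omega_m)$ is compact; this is how the paper treats the rings throughout (``compact partitioning''), so it is consistent with the ambient conventions, but it is worth noting that your proof---like the paper's---really only needs boundedness of $\esssupp(g)$ for the representation to make sense.
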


\begin{proof}
Assume that $\varphi\colon\eL{1}{\Omega}\to\R{}$ is given by $\varphi(f):=\int\limits_\Omega\langle g(x),f(x)\rangle\,dx$ for some $g\in L^\infty(\Omega,E^*)$ with $||\esssupp(g)||^+<+\infty$. Let $\z{\Omega_n}$ be any exhaustion of $\Omega$. Note that there must be an $n_0\in\mathbb{N}$ such that $\esssupp(g)\subset\cl_\Omega(\Omega_{n_0})$. Therefore,
\begin{align*}
\int\limits_\Omega\langle g(x),f(x)\rangle\,dx&=\int\limits_{\cl_\Omega(\Omega_{n_0})}\langle g(x),f(x)\rangle\,dx\<\int\limits_{\cl_\Omega(\Omega_{n_0})}|g(x)||f(x)|\,dx\\&\<||g||_{L^\infty(\Omega,E^*)}||f||_{L^1\big(\cl_\Omega(\Omega_{n_0}),E\big)}<\infty,
\end{align*}
which means that the value $\varphi(f)$ is well-defined. If $f_k\xrightarrow[k\to\infty]{\eL{1}{\Omega}}f$, then \[|\varphi(f_k)-\varphi(f)|\<||g||_{L^\infty(\Omega,E^*)}||f_k-f||_{L^1\big(\cl_\Omega(\Omega_{n_0}),E\big)}\xrightarrow[k\to\infty]{}0.\] Therefore, $\varphi\in\loc^*$. \par Now, let us assume that $\psi\in\eL{1}{\Omega}^*$. Since $L^1(\Omega,E)\hookrightarrow\eL{1}{\Omega}$ continuously, $\w{\psi}:=\res{\psi}{L^1(\Omega,E)}\in L^1(\Omega,E)^*$. In view of Theorem \ref{uhl}, there exists $g\in L^\infty(\Omega,E^*)$ such that \[\w{\psi}(f)=\int_\Omega\langle g(x),f(x)\rangle\,dx\] for $f\in L^1(\Omega,E)$.\par Let $J\colon E^*\map E$ be defined by $J(x^*):=\f\{x\in E\colon\langle x^*,x\rangle=|x|^2=|x^*|^2\g\}$, i.e. $J$ is the inverse of the duality map. Since $E$ is strictly convex, $J$ is a mapping. It can be shown that $J$ is demicontinuous. To this aim, assume that $x_n^*\xrightarrow[n\to\infty]{E^*}x^*_0$. The sequence $\z{J(x^*_n)}$ is relatively weak-$*$ compact in the 
double dual $E^{**}$, thanks to Banach-Alaoglu theorem. In other words, there exists $y\in E^{**}$ such that $\langle x^*,J(x^*_{k_n})\rangle\xrightarrow[n\to\infty]{}\langle y,x^*\rangle$ for every $x^*\in E^*$. On the one hand 
\begin{equation}\label{jeden}
\langle y,x^*_0\rangle=\lim\limits_{n\to\infty}\langle x^*_{k_n},J(x^*_{k_n})\rangle=\lim\limits_{n\to\infty}|x^*_{k_n}|^2=|x^*_0|^2.
\end{equation}
On the other 
\begin{equation}\label{dwa}
\langle y,x^*\rangle=\lim\limits_{n\to\infty}\langle x^*,J(x^*_{k_n})\rangle\<\lim\limits_{n\to\infty}|x^*||J(x^*_{k_n})|=|x^*||x^*_0|
\end{equation}
for every $x^*\in E^*$. From \eqref{jeden} and \eqref{dwa} it follows that $|x_0^*|\<|y|$ and $|y|\<|x_0^*|$, respectively. Thus, $\langle y,x_0^*\rangle=|x_0^*|^2=|y|^2$. This mean that $y\in{\mathcal F}(x_0^*)$ with ${\mathcal F}\colon E^*\map E^{**}$ being the duality map. Since $J(x_0^*)\in{\mathcal F}(x_0^*)$ and $E^{**}$ has strictly convex norm, one gets $y=J(x_0^*)$. Eventually, $J(x_n^*)\xrightharpoonup[n\to\infty]{E}J(x_0^*)$.\par Remind that $\w{\Omega}_n:=\cl_\Omega(\Omega_n)\setminus\Omega_{n-1}$. Suppose that there is a sequence $\f\{\w{\Omega}_{m_n}\g\}_{n=1}^\infty$ such that \[a_n:=\int_{\w{\Omega}_{m_n}}|g(x)|\,dx\neq 0.\] Since $J\circ g$ is weakly measurable and essentially separably valued, the strong measurability of $J\circ g\colon\Omega\to E$ follows by the Pettis measurability theorem. Observe that the formula 
\[f_n(x):=
\begin{cases}
a_k^{-1}|J(g(x))|^{-1}J(g(x))&\text{on }\w{\Omega}_{m_k}\text{ for }k=1,\ldots,n\\
0&\text{elsewhere}
\end{cases}\] 
makes sense almost everywhere on $\Omega$. Thus, $f_n\in L^1(\Omega,E)$. One easily sees that \[\w{\psi}(f_n)=\int_\Omega\langle f_n(x),g(x)\rangle\,dx=\sum_{k=1}^na_k^{-1}\int_{\w{\Omega}_{m_k}}|g(x)|\,dx=n.\] Whence $\psi(f_n)\xrightarrow[n\to\infty]{}+\infty$. Since $f_n\xrightarrow[n\to\infty]{\eL{1}{\Omega}}\sum\limits_{n=1}^\infty a_n^{-1}|J(g(\cdot))|^{-1}(J\circ g){\bf 1}_{\w{\Omega}_{m_n}}$, this contradicts the continuity of the functional $\psi$. Therefore, there is $N\in\mathbb{N}$ such that $\int_{\w{\Omega}_n}|g(x)|\,dx=0$ for all $n\geqslant N$. Hence, $\int_{\Omega\setminus\Omega_{N-1}}|g(x)|\,dx=0$, which means that $g(x)=0$ a.e. on $\Omega\setminus\cl_\Omega(\Omega_{N-1})$. It follows that $\esssupp(g)\subset\cl_\Omega(\Omega_{N-1})$, i.e. the support $\esssupp(g)$ must be bounded. Since the subspace $L^1(\Omega,E)$ is dense in $\eL{1}{\Omega}$, the functional $\psi$ constitutes an element of the set \[\f\{\varphi\colon\eL{1}{\Omega}\to\R{}\colon\exists\,g\in L^\infty_c(\Omega,E^*)\ni\varphi(f)=\!\int_\Omega\langle g(x),f(x)\rangle\,dx\g\}\!.\]
\end{proof}

The next result is a technical but crucial uplifting of Theorem \ref{rwc} onto the case of Bochner locally integrable functions.
\begin{theorem}\label{extension}
Assume $(\mathbb{E})$. Let $\Omega\subset\R{N}$ be open $($not necessarily bounded$)$ and ${\mathfrak L}(\R{N})$ be the Lebesgue $\sigma$-field. Let $A$ be a locally integrably bounded subset of $L^1_{\text{loc}}(\Omega,{\mathfrak L}(\R{N})\cap\Omega,\ell;E)$. Assume that for a.a. $x\in\Omega$, the set $\{f(x)\colon f\in A\}$ is relatively weakly compact in $E$. Then $A$ is relatively weakly compact.
\end{theorem}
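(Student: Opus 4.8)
The plan is to localise the problem to the compact partitioning $\{\w{\Omega}_n\}_{n=1}^\infty$ of $\Omega$, to apply the $\sigma$-finite weak compactness criterion of Theorem \ref{rwc} on each ring separately, and then to reassemble the pieces with the help of the isomorphism of Lemma \ref{iso} together with a Tychonoff-type argument. First I would fix an exhaustion $\z{\Omega_n}$ of $\Omega$ and invoke Lemma \ref{iso} to identify $\loc$ isomorphically, as a Fr\'echet space, with the countable product $\prod_{n=1}^\infty L^1(\w{\Omega}_n,E)$ via $\Phi(f)=\f(\res{f}{\w{\Omega}_n}\g)_{n=1}^\infty$. Since each ring $\w{\Omega}_n$ is bounded, $\f(\w{\Omega}_n,{\mathfrak L}(\R{N})\cap\w{\Omega}_n,\ell\g)$ is a finite, nonatomic measure space, which is precisely the setting in which Theorem \ref{rwc} operates.

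On each factor I would verify that $A_n:=\res{A}{\w{\Omega}_n}$ meets the hypotheses of Theorem \ref{rwc} with $p=1$. Local integrable boundedness of $A$ supplies, over the compact closure of $\w{\Omega}_n$, a dominating function $m_n\in L^1(\w{\Omega}_n)$ with $|f(x)|\< m_n(x)$ a.e. for every $f\in A$; on a finite measure set such domination forces the uniform $1$-integrability of $A_n$. The relative weak compactness of the cross-sections $\{f(x)\colon f\in A\}$ is inherited verbatim by $A_n$ for a.a. $x\in\w{\Omega}_n$. Assumption $(\mathbb{E})$ guarantees that $E^*$ enjoys the Radon-Nikodym property, so Theorem \ref{rwc} applies and each $A_n$ is relatively weakly compact in $L^1(\w{\Omega}_n,E)$; I denote by $K_n$ its weakly compact weak closure.

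The decisive step is the passage from coordinatewise weak compactness back to weak compactness in the product Fr\'echet space. Here I would exploit the fact that the topological dual of $\prod_{n=1}^\infty L^1(\w{\Omega}_n,E)$ is the locally convex direct sum $\bigoplus_{n=1}^\infty L^1(\w{\Omega}_n,E)^*$, a description which, transported through $\Phi$ and Theorem \ref{uhl}, is exactly the content of Lemma \ref{conjugate}: every continuous functional on $\loc$ is represented by some $g\in L^\infty_c(\Omega,E^*)$, and the compactness of $\esssupp(g)$ confines $g$ to finitely many rings. Consequently the weak topology $\sigma\f(\prod_n L^1(\w{\Omega}_n,E),\bigoplus_n L^1(\w{\Omega}_n,E)^*\g)$ coincides with the product of the individual weak topologies $\sigma\f(L^1(\w{\Omega}_n,E),L^1(\w{\Omega}_n,E)^*\g)$, since the representing functionals act through finitely many coordinates only. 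By Tychonoff's theorem $\prod_{n=1}^\infty K_n$ is then compact in this product-weak topology, hence weakly compact in $\prod_n L^1(\w{\Omega}_n,E)$.

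To conclude, I would observe that $\Phi(A)\subset\prod_{n=1}^\infty K_n$, so $\overline{\Phi(A)}^{\,w}$, being a weakly closed subset of the weakly compact set $\prod_n K_n$, is itself weakly compact; that is, $\Phi(A)$ is relatively weakly compact. As an isomorphism of Fr\'echet spaces, $\Phi$ induces a bijection of the duals and is therefore a homeomorphism for the respective weak topologies, mapping relatively weakly compact sets to relatively weakly compact sets in both directions; pulling back, $A=\Phi^{-1}(\Phi(A))$ is relatively weakly compact in $\loc$. The hard part will be justifying cleanly the identity between the weak topology of the Fr\'echet product and the product of the coordinate weak topologies, since everything hinges on the direct-sum form of the dual furnished by Lemma \ref{conjugate}; by contrast, the reduction to the finite measure rings and the verification of the hypotheses of Theorem \ref{rwc} are essentially routine.
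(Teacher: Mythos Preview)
Your argument is correct and takes a genuinely different, more structural route than the paper. The paper works directly with nets: given a net $(w_\sigma)_{\sigma\in\Sigma}\subset A$, it restricts to each ring $\w\Omega_n$, extracts on each ring a weakly convergent subnet via Theorem \ref{ulger}, and then painstakingly builds an inverse system of directed sets $\{(\Sigma_n,\preccurlyeq_n),\varphi_{nm}\}$ over $\mathbb{N}$ so as to thread together a single subnet that converges on every ring simultaneously; Lemma \ref{conjugate} is then invoked to certify that the assembled limit is a weak cluster point in $\loc$. You instead recognise that Lemma \ref{iso} identifies $\loc$ with the countable product $\prod_n L^1(\w\Omega_n,E)$, use the general locally convex fact $\f(\prod_n X_n\g)^*=\bigoplus_n X_n^*$ (of which Lemma \ref{conjugate} is the concrete incarnation here) to equate the weak topology of the product with the product of the coordinate weak topologies, and conclude by Tychonoff. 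This buys you a much shorter proof with no net bookkeeping; it also makes transparent why the result holds, and in fact your route shows that the hypothesis $(\mathbb E)$ is not essential at this stage: on each finite-measure ring Theorem \ref{ulger} already suffices in place of Theorem \ref{rwc}, and the product duality needs no Radon--Nikodym assumption. The paper's hands-on net construction, by contrast, is self-contained and avoids appealing to the abstract product/direct-sum duality, at the price of the inverse-system machinery.
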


\begin{proof}
Let $\z{\Omega_n}$ be any exhaustion of $\Omega$. Consider a net $(w_\sigma)_{\sigma\in\Sigma}\subset A$. Observe that $\f(\res{w_\sigma}{\w{\Omega}_n}\g)_{\sigma\in\Sigma}$ as a net in $L^1\f(\w{\Omega}_n,E\g)$ meets assumptions of Theorem \ref{ulger}. Thus, for each $n\in\mathbb{N}$ there exists a directed set $(\Sigma_n,\preccurlyeq_n)$ and a net $(w_{\sigma'})_{\sigma'\in\Sigma_n}$ finer than the net $(w_\sigma)_{\sigma\in\Sigma}$, which satisfies $\res{w_{\sigma'}}{\w{\Omega}_n}\xrightharpoonup[\sigma'\in\Sigma_n]{L^1(\w{\Omega}_n,E)}w^n$. We may assume w.l.o.g. that for every pair $(n,m)\in\mathbb{N}^2$ with $n\geqslant m$ the net $(w_{\sigma''})_{\sigma''\in\Sigma_n}$ is finer than $(w_{\sigma'})_{\sigma'\in\Sigma_m}$ i.e., there exists a nondecreasing function $\varphi_{nm}\colon\Sigma_n\to\Sigma_m$ such that
\begin{itemize}
\item[(i)] the range $\varphi_{nm}(\Sigma_n)$ is cofinal in $\Sigma_m$,
\item[(ii)] $\forall\,\sigma''\in\Sigma_n\;\;w_{\sigma''}=w_{\varphi_{nm}(\sigma'')}$.
\end{itemize}
In other words, we are dealing with an inverse system $\f\{(\Sigma_n,\preccurlyeq_n),\varphi_{nm}\colon\Sigma_n\to\Sigma_m\g\}$ over the set~$\mathbb{N}$. For each $\n$ define $\varphi_n\colon\Sigma_n\to\Sigma$, by $\varphi_n:=\varphi_1\circ\varphi_{21}\ldots\circ\varphi_{n(n-1)}$. Denote by $\psi\colon\mathbb{N}\map\bigcup_{n=1}^\infty\Sigma_n$ a multimap such that $\psi(n):=\Sigma_n$. Let $\varphi\colon\gr(\psi)\to\Sigma$ be a function defined by the formulae $\varphi((n,\sigma)):=\varphi_n(\sigma)$. Observe that the set $\gr(\psi)$ is directed by the relation $(n,\sigma)\succcurlyeq(m,\sigma')\overset{\text{def}}{\Longleftrightarrow}n\geqslant m\wedge\varphi_{nm}(\sigma)\succcurlyeq_m\sigma'$. It is easy to show that $\varphi$ is nondecreasing and satisfies conditions (i)--(ii). Therefore, the net $(w_{\sigma'})_{\sigma'\in\gr(\psi)}$ is finer than the initial net $(w_\sigma)_{\sigma\in\Sigma}$. Let $w:=(w^n)_{n=1}^\infty$. Then $w\in\loc$, by Lemma \ref{iso}. We claim that $w$ is a cluster point of $(w_{\sigma'})_{\sigma'\in\gr(\psi)}$ in the weak topology of the space $\loc$.\par Take $\eps>0$, $g\in\eL{1}{\Omega}^*$ and $(n,\sigma)\in\gr(\psi)$. Applying Lemma \ref{conjugate} (in a slightly informal way), one sees that there is $n_0\geqslant n$ such that $\esssupp(g)\subset\cl_\Omega(\Omega_{n_0})$. Since for each $k\in\mathbb{N}$ one has $\res{w_{\sigma'}}{\w{\Omega}_k}\xrightharpoonup[\sigma'\in\Sigma_k]{L^1(\w{\Omega}_k,E)}w^k$, we infer that
\begin{multline*}
\sup_{1\<k\<n_0}\f|\int_{\w{\Omega}_k}\f\langle\res{g}{\w{\Omega}_k}(x),w_{\varphi((n_0,\sigma'))}(x)-w^k(x)\g\rangle\,dx\g|\\\overset{(\text{ii})}{=}\sup_{1\<k\<n_0}\f|\int_{\w{\Omega}_k}\f\langle\res{g}{\w{\Omega}_k}(x),\res{w_{\varphi_{(n_0)k}(\sigma')}}{\w{\Omega}_k}(x)-w^k(x)\g\rangle\,dx\g|\xrightarrow[\sigma'\in\Sigma_{n_0}]{}0.
\end{multline*}
Taking into account that $\varphi_{(n_0)n}(\Sigma_{n_0})$ is cofinal in $\Sigma_n$ and \[\f|\f\langle g,w_{\varphi((n_0,\sigma'))}-w\g\rangle\g|\<\sum_{k=1}^{n_0}\f|\int_{\w{\Omega}_k}\f\langle\res{g}{\w{\Omega}_k}(x),w_{\varphi((n_0,\sigma'))}(x)-w^n(x)\g\rangle\,dx\g|,\] we see that there must be an index $\sigma_0\in\Sigma_{n_0}$ such that $(n_0,\sigma_0)\succcurlyeq(n,\sigma)$ and \[\f|\f\langle g,w_{\varphi((n_0,\sigma_0))}-w\g\rangle\g|<\eps.\] In other words, $w_{\varphi((n_0,\sigma_0))}\in w+g^{-1}((-\eps,\eps))$. \par Since $w$ is a cluster point of $(w_{\sigma'})_{\sigma'\in\gr(\psi)}$, it is also a cluster point of the net $(w_\sigma)_{\sigma\in\Sigma}$. Therefore, the set $A$ must be compact in the weak topology of the space $\loc$.
\end{proof}

\section{Solutions for functional integral inclusions of Volterra type}
Let $X$ be a topological space and $E$ be a Banach space. The locally convex space $C(X,E)$ endowed with the compact-open topology is complete iff $X$ is a $k$-space $($see \cite[Th.3.3.21.]{eng}$)$. If the space $X$ is $\sigma$-compact, then the space $C(X,E)$ can be metrizable in a standard manner. Therefore, the topological vector space $C(\Omega,E)$ endowed with the compact-open topology is a Fr\'echet space. It is not normable, since the local base $\f\{f\in C(X,E)\colon \sup\limits_{x\in\Omega_n}|f(x)|<\frac{1}{n}\g\}_{n=1}^\infty\,,$ generated by any exhaustion $\z{\Omega_n}$ of $\Omega$, has no bounded elements. 

\par The notion of the eponymous measure of non-compactness is laid down by the following definition:
\begin{definition}
Let $\mathbb{R}_+^{\mathbb{N}}$ be the partially ordered linear space of all positively valued sequences. Assume that $\beta\colon{\mathcal B}(E)\to\R{}_+$ is the ball measure of noncompactness on $E$, $\z{\Omega_n}$ is some exhaustion of $\Omega$ and $\tau\colon\Omega\to\R{}_+$ is a mapping. For each $N\in\mathbb{N}$ and every $L\in\R{\mathbb{N}\setminus\{1,\ldots,N-1\}}_+$ define the measure of nonequicontinuity $\nu^N_L\colon{\mathcal B}(C(\Omega,E))\to\R{\mathbb{N}\setminus\{1,\ldots,N-1\}}_+$ in the following way $\nu_L^N(M):=(\,\beta_{L_n}(M)+e_n(M))_{n=N}^\infty$, where \[\beta_{L_n}(M):=\sup_{x\in\Omega_n}e^{-L_n\tau(x)}\beta(M(x))\;\;\text{and}\;\;e_n(M):=\sup_{x\in\Omega_n}\limsup_{y\to x}\sup_{f\in M}|f(x)-f(y)|.\]  Measure $\nu_L^N$ constitutes an MNC in the sense of Definition \ref{mnc}. on the Fr\'echet space $C(\Omega,E)$ endowed with the compact-open topology $($cf. \cite[Th.5.25]{andres}$)$. Moreover, it is regular due to the fact that $\Omega$ is a locally compact space $($\cite[Th.47.1]{munkres}$)$.
\end{definition}
Let $N_F\colon C(\Omega,E)\map\loc$ be the Nemytski\v{\i} operator corresponding to the multimap $F$, i.e. \[N_F(u):=\f\{w\in L^1(\Omega,E)\colon w(x)\in F(x,u(x))\text{ a.e. on }\Omega\g\}.\] Denote by $V\colon\loc\to C(\Omega,E)$ the Volterra integral operator, given by \[V(w)(x):=\int_{\Lambda(x)}k(x,y)w(y)\,dy.\]
\par Investigation of the existence of solutions for inclusion \eqref{general} focuses, to a large degree, on the fact that the operator $N_g\circ(I\times(V\circ N_F))$ is $\nu_L^N$-condensing. Estimations, related to this argumentation, set a certain technical limitation relating to the compatibility of dimensions of the domain $\Omega$ and the Euclidean space, whose Lebesgue measurable subsets constitute the codomain of the function $\Lambda$. In order to cope with this limitation, we introduce the following 
\begin{definition}
We say that the exhaustion $\z{\Omega_n}$ is $\Lambda$-invariant, if each member $\Omega_n$ of $\z{\Omega_n}$ is invariant under $\Lambda$. Denote by ${\mathbf\Omega}(\Lambda)$ the class of $\Lambda$-invariant exhaustions of $\Omega$.
\end{definition}

\begin{example}[the class of $\Lambda$-invariant exhaustions is nonvoid]\label{ex2}\mbox{ }
\begin{itemize}
\item[(a)] Define $\Lambda\colon\Int\R{N}_+\to{\mathfrak L}(\R{N})$ by the formulae $\Lambda([x_1,\ldots,x_N]):=\prod\limits_{i=1}^N(0,x_i)$. Observe that $\Lambda$ is $\rho$-continuous. Let $\z{\Sigma_n}$ be any exhaustion of the domain $\Int\R{N}_+$. Put $\Omega_n:=\Lambda(\Sigma_n)$. Since $\Lambda$ is idempotent, one has $\Lambda(\Omega_n)=\Omega_n$ and $\bigcup_{n=1}^\infty\Omega_n=\Lambda(\Int\R{N}_+)=\Int\R{N}_+$. Moreover, $\Omega_n$ is precompact and $\Omega_n\subset\Omega_{n+1}$.
\item[(b)] Assume that $\Lambda\colon\Omega\to{\mathfrak L}(\R{N})$ is such that 
\[\begin{cases}
\forall\,x\in\Omega\;\;||\Lambda(x)||^+\<|x|\\
\forall\,x\in\Omega,\;\;B(\Lambda(x),\dist(x,\partial\Omega))\subset\Omega.
\end{cases}\]
The standard exhaustion $\z{\Omega_n}$ of $\Omega$ is given by \[\Omega_n:=\f(\R{N}\setminus D\big(\partial\Omega,n^{-1}\big)\g)\cap B(0,n)\cap\Omega.\] Clearly, $\z{\Omega_n}\in{\mathbf\Omega}(\Lambda)$.
\item[(c)] Assume that there is a point $x_0\in\R{N}$ such that 
\[\begin{cases}
\forall\,x\in\Omega\;\sup\limits_{y\in\Lambda(x)}|y-x_0|\<|x-x_0|,\\
\Lambda(\Omega)\subset\Omega.
\end{cases}\]
Let $\z{\Omega_n}$ be the exhaustion of $\Omega$ given by $\Omega_n:=B(x_0,n)\cap\Omega$. Observe that $\z{\Omega_n}\in{\mathbf\Omega}(\Lambda)$.
\end{itemize}
\end{example}
The function $\tau$ appearing in the definition of the measure of nonequicontinuity must also have some additional property enabling to demonstrate the auxiliary Lemma \ref{lemcon}. This property is described by the following
\begin{definition}\label{Xi}
We will say that an usc mapping $\tau\colon\Omega\to\R{}_+\setminus\{0\}$ is $\Lambda$-admissible, if 
\begin{equation}\label{Xi2}
\forall\,x_0\in\Omega\,\forall\,\delta>0\,\exists\,x\in B(x_0,\delta)\cap\Omega\;\;\;\sup\tau(\Lambda(x)\cap\Lambda(x_0))<\tau(x_0).
\end{equation}
Denote by ${\bm\tau}(\Lambda)$ the class of $\Lambda$-admissible mappings.
\end{definition}

\begin{example}[the class of $\Lambda$-admissible mappings is nonvoid]\label{ex3}\mbox{}
\begin{itemize}
\item[(a)] Let $\Lambda\colon\Int\R{N}_+\to\mathfrak{L}(\R{N})$ be given by $\Lambda([x_1,\ldots,x_N]):=\prod\limits_{i=1}^N(0,x_i)$. Define the function $\tau\colon\Omega\to\R{}_+$ by the formulae $\tau:=\ell\circ\Lambda$. Then $\tau$ satisfies
\begin{equation*}
\begin{cases}
|x|<|y|\Rightarrow\tau(x)<\tau(y)\\
\forall\,x\in\Omega\;\;\sup\tau(\Lambda(x))\<\tau(x)
\end{cases}
\end{equation*}
i.e., $\tau\in{\bm\tau}(\Lambda)$.
\item[(b)] Assume that $\Lambda\colon\Omega\to\mathfrak{L}(\R{N})$ and an usc mapping $\varphi\colon\R{}_+\to\R{}_+\setminus\{0\}$ satisfy
\[\begin{cases}
\forall\,x\in\Omega\;\;||\Lambda(x)||^+\<|x|\\
x<y\Rightarrow\varphi(x)<\varphi(y).
\end{cases}\]
Let $\tau\colon\Omega\to\R{}_+$ be such that $\tau(x):=\varphi(|x|)$. Clearly, $\tau\in{\bm\tau}(\Lambda)$.
\end{itemize}
\end{example}

\begin{lemma}\label{lemcon}
Let $\z{\Omega_n}$ be an exhaustion of $\Omega$ and $\tau\in{\bm\tau}(\Lambda)$. Define $\Phi\colon\R{}_+\times L^1_{\text{loc}}(\Omega,\R{}_+)\to\R{\mathbb{N}}_+$ by the formulae \[\Phi(L,\zeta)_n:=\sup_{x\in\Omega_n}e^{-L\tau(x)}\int\limits_{\Lambda(x)}e^{L\tau(y)}\zeta(y)\,dy.\] Then $\lim\limits_{L\to+\infty}\Phi(L,\zeta)_n=0$ for each fixed $(\zeta,n)\in L^1_{\text{loc}}(\Omega,\R{}_+)\times\mathbb{N}$.
\end{lemma}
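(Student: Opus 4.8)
The plan is to rewrite the quantity under the supremum as
\[e^{-L\tau(x)}\int_{\Lambda(x)}e^{L\tau(y)}\zeta(y)\,dy=\int_{\Lambda(x)}e^{L(\tau(y)-\tau(x))}\zeta(y)\,dy,\]
so that the behaviour as $L\to+\infty$ is governed entirely by the sign of $\tau(y)-\tau(x)$ on $\Lambda(x)$. The whole argument hinges on extracting from $\Lambda$-admissibility the measure-theoretic fact that, for every $x\in\Omega$, one has $\tau(y)<\tau(x)$ for $\ell$-almost every $y\in\Lambda(x)$. This is the step I would carry out first, and it is where condition \eqref{Xi2} is genuinely used. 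To prove it, fix $x\in\Omega$ and put $G:=\{y\in\Lambda(x)\colon\tau(y)\geqslant\tau(x)\}$. Applying \eqref{Xi2} with $\delta=1/k$ produces points $x_k\in B(x,1/k)\cap\Omega$, hence $x_k\to x$, with $\sup\tau(\Lambda(x_k)\cap\Lambda(x))<\tau(x)$. If some $y\in G$ belonged to $\Lambda(x_k)$, then $y\in\Lambda(x_k)\cap\Lambda(x)$ would force $\tau(y)<\tau(x)$, contradicting $y\in G$; therefore $G\subset\Lambda(x)\setminus\Lambda(x_k)\subset\Lambda(x)\triangle\Lambda(x_k)$ for every $k$. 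Since $\Lambda$ is $\rho$-continuous and $x_k\to x$, we obtain $\ell(G)\leqslant\rho(\Lambda(x),\Lambda(x_k))\to0$, whence $\ell(G)=0$.

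With this almost-everywhere inequality in hand the pointwise analysis is routine. Fixing a compact $K:=\cl_\Omega(\Omega_m)$ with $\Lambda(\Omega_n)\subset\Omega_m$ (such $m$ exists because $\Lambda(\Omega_n)$ is a bounded subset of $\Omega$), we have $\zeta\in L^1(K)$; the integrand $e^{L(\tau(y)-\tau(x))}\zeta(y)$ is bounded a.e.\ by $\zeta(y)$, is nonincreasing in $L$ because $\tau(y)-\tau(x)<0$ a.e., and tends to $0$ a.e.\ as $L\to+\infty$. Dominated convergence then shows that the quantity under the supremum tends to $0$ for each fixed $x\in\Omega_n$, and that for each fixed $n$ the map $L\mapsto\Phi(L,\zeta)_n$ is nonincreasing (a supremum of functions nonincreasing in $L$), hence convergent to some $c\geqslant0$.

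The remaining and genuinely delicate point is to upgrade this to the vanishing of the supremum over $\Omega_n$, i.e.\ to uniform convergence; I expect this to be the main obstacle. I would reduce it to the $L$-free assertion that
\[R(\delta):=\sup_{x\in\cl_\Omega(\Omega_n)}\int_{\{y\in\Lambda(x)\colon\tau(y)>\tau(x)-\delta\}}\zeta(y)\,dy\xrightarrow[\delta\to0^+]{}0,\]
by splitting $\Lambda(x)$ at the level $\tau(x)-\delta$: on the far part the integrand is $\leqslant e^{-L\delta}$, contributing at most $e^{-L\delta}\|\zeta\|_{L^1(K)}$, which is uniformly negligible for large $L$; on the near part the integrand is $\leqslant1$ a.e., contributing at most $R(\delta)$. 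Choosing first $\delta$ small and then $L$ large would then give $\Phi(L,\zeta)_n<\eps$ for all admissible $x$ simultaneously.

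The crux is thus $R(\delta)\to0$. For each fixed $x$ the claim of the first paragraph gives $\int_{\{y\in\Lambda(x)\colon\tau(y)\geqslant\tau(x)\}}\zeta=0$, so the integrand defining $R$ decreases to $0$ with $\delta$; the entire difficulty is uniformity over the compact set $\cl_\Omega(\Omega_n)$. Here I would run a compactness (Dini-type) argument, exploiting that $\Lambda$-admissibility forbids $\tau$ from having an essential plateau at its top value along $\Lambda(x)$ — a plateau of $\tau$ terminating at $x$ would contradict \eqref{Xi2} — while keeping careful track of the fact that $x\mapsto\int_{\Lambda(x)}h$ is continuous for fixed $h\in L^1(K)$ (from $|\int_{\Lambda(x)}h-\int_{\Lambda(x')}h|\leqslant\int_{\Lambda(x)\triangle\Lambda(x')}|h|$ together with $\rho$-continuity), whereas the factor $e^{-L\tau(x)}$ is only lower semicontinuous because $\tau$ is merely upper semicontinuous. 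Reconciling this semicontinuity mismatch with the required uniform bound is the technical heart of the lemma.
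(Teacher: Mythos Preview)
Your argument stalls exactly where you say it does: you never establish the passage from pointwise convergence to $\Phi(L,\zeta)_n\to 0$. The reduction to $R(\delta)\to 0$ and the sketched Dini--compactness scheme remain a plan, not a proof; the ``semicontinuity mismatch'' you flag is left unresolved. So as written there is a genuine gap.

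The paper sidesteps your $R(\delta)$ device entirely. It first argues that for each fixed $L$ the map $x\mapsto e^{-L\tau(x)}\int_{\Lambda(x)}e^{L\tau(y)}\zeta(y)\,dy$ is upper semicontinuous on $\Omega$ (the integral factor being continuous in $x$ by $\rho$-continuity of $\Lambda$), and from this concludes that it suffices to verify the limit at each fixed $x_0$---which, together with the monotonicity in $L$ you already established, is a one-line Dini reduction on the compact exhausting piece. For the pointwise limit the paper also argues differently from you: instead of first deriving your a.e.\ inequality and invoking dominated convergence, it uses \eqref{Xi2} in one stroke by choosing, for given $x_0$ and $\eps>0$, a point $x$ near $x_0$ with both $\int_{\Lambda(x_0)\triangle\Lambda(x)}\zeta<\eps$ and $\sup\tau(\Lambda(x)\cap\Lambda(x_0))<\tau(x_0)$, and then splitting $\Lambda(x_0)$ into $\Lambda(x_0)\setminus\Lambda(x)$ (contribution at most $\eps$) and $\Lambda(x_0)\cap\Lambda(x)$ (contribution $\to 0$ as $L\to\infty$ because of the strict gap in $\tau$). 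Your extraction of $\tau(y)<\tau(x)$ a.e.\ on $\Lambda(x)$ from \eqref{Xi2} is an elegant observation, but it is not the mechanism by which the paper closes the proof.
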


\begin{proof}
Firstly observe that $e^{L\tau(\cdot)}\zeta(\cdot)\in L^1_{\text{loc}}(\Omega,R{}_+)$ and \[\limsup_{x\to x_0}\f(\,\int\limits_{\Lambda(x)}e^{L\tau(y)}\zeta(y)\,dy-\int\limits_{\Lambda(x_0)}e^{L\tau(y)}\zeta(y)\,dy\g)\<\lim_{x\to x_0}\int\limits_{\Lambda(x)\triangle\Lambda(x_0)}e^{L\tau(y)}\zeta(y)\,dy=0.\] It follows that $\Omega\ni x\mapsto e^{-L\tau(x)}\int\limits_{\Lambda(x)}e^{L\tau(y)}\zeta(y)\,dy\in\R{}_+$ is upper semicontinuous. Thus, it is sufficient to check that 
\begin{equation}\label{granica}
\lim\limits_{L\to+\infty}e^{-L\tau(x)}\int\limits_{\Lambda(x)}e^{L\tau(y)}\zeta(y)\,dy=0
\end{equation}
for every fixed $x\in\Omega$. So, let us take $x_0\in\Omega$ and $\eps>0$. Considering that $\Omega$ is open and $\Lambda$ is $\rho$-continuous, we may find $x\in\Omega$ for which
\begin{equation}\label{wciur}
\int\limits_{\Lambda(x_0)\triangle\Lambda(x)}\zeta(y)\,dy<\eps
\end{equation}
and \eqref{Xi2} is satisfied. Thus, we may estimate
\begin{align*}
0&\<\overline{\lim_{L\to\infty}}e^{-L\tau(x_0)}\int\limits_{\Lambda(x_0)}e^{L\tau(y)}\zeta(y)\,dy\\&\<\overline{\lim_{L\to\infty}}\f(\,\int\limits_{\Lambda(x_0)\setminus\Lambda(x)}e^{L(\tau(y)-\tau(x_0))}\zeta(y)\,dy+\int\limits_{\Lambda(x_0)\cap\Lambda(x)}e^{L(\tau(y)-\tau(x_0))}\zeta(y)\,dy\g)\\&\<\overline{\lim_{L\to\infty}}\f(e^{L(\sup\tau(\Lambda(x_0))-\tau(x_0))}\int\limits_{\Lambda(x_0)\triangle\Lambda(x)}\zeta(y)\,dy+e^{L(\sup\tau(\Lambda(x_0)\cap\Lambda(x))-\tau(x_0))}\int\limits_{\Lambda(x_0)\cap\Lambda(x)}\zeta(y)\,dy\g)\\&\<\int\limits_{\Lambda(x_0)\triangle\Lambda(x)}\zeta(y)\,dy+\overline{\lim_{L\to\infty}}\exp(L(\sup\tau(\Lambda(x_0)\cap\Lambda(x))-\tau(x_0)))||\zeta||_{L^1\big(\overline{\Lambda(x_0)\cap\Lambda(x)}\big)}\\&<\eps
\end{align*}
The latter implies \eqref{granica}.
\end{proof}
Let $\Delta:=\{(x,y)\in\Omega\times\Omega\colon y\in\Lambda(x)\}$. The domain $\Delta$ is nothing more than the graph $\gr(\Lambda)$ of $\Lambda$, if the latter is thought of as a set-valued map. We impose on the kernel $k\colon\Delta\to{\mathcal L}(E)$ of the Volterra integral operator $V$ the following conditions
\begin{itemize}
\item[$(\ka_1)$] $\forall\,x\in\Omega,\;\;\;k(x,\cdot)\in L^\infty(\Lambda(x),{\mathcal L}(E))$,
\item[$(\ka_2)$] $K\in C\f(\Omega,L^\infty_{\text{loc}}\f(\Omega,{\mathcal L}(E)\g)\g)$, where $K$ is induced by the mapping $k$, i.e. $K(x)(y):=k(x,y)$.
\end{itemize}

\begin{remark}
Endowed with the topology induced by a countable family of seminorms \[||f||_{L^\infty_n}:=||f(x)||_{L^\infty(\Omega_n,{\mathcal L}(E))},\;f\in L^\infty_{\text{loc}}(\Omega,{\mathcal L}(E)),\] with $\z{\Omega_n}$ being an exhaustion of $\Omega$, the space $L^\infty_{\text{loc}}(\Omega,{\mathcal L}(E))$ is locally convex and completely metrizable $($i.e., a Fr\'echet space$)$. By writing $K(x)\in L^\infty(\Omega_n,{\mathcal L}(E))$ we have in mind the trivial extension by zero from $\Lambda(x)$.
\end{remark}

\begin{remark}
Observe that the difference between the two types of continuity of operator $K$, i.e. between the assumption that $K\in C\f(\Omega,L^\infty(\Omega,{\mathcal L}(E))\g)$ and $K\in C\f(\Omega,\eL{\infty}{\Omega}\g)$, amounts to the difference between almost uniform convergence on the measure space $\Omega$ and almost uniform convergence on every compact subset of $\Omega$. 
\end{remark}

\begin{remark}
$k\in C(\Delta,{\mathcal L}(E))\Rightarrow K\in C\f(\Omega,L^\infty_{\text{loc}}\f(\Omega,{\mathcal L}(E)\g)\g)$. 
\end{remark}

Our hypotheses on the multimap $F\colon\Omega\times E\map E$ have the following form:
\begin{itemize}
\item[$(\F_1)$] for every $(x,u)\in \Omega\times E$ the set $F(x,u)$ is nonempty closed and convex,
\item[$(\F_2)$] the map $F(\cdot,u)$ has a strongly measurable selection for every $u\in E$,
\item[$(\F_3)$] the map $F(x,\cdot)$ is upper hemicontinuous for a.a. $x\in\Omega$,
\item[$(\F_4)$] there exists $b\in L^1_{\text{loc}}(\Omega)$ such that \[||F(x,u)||^+\<b(x)(1+|u|)\;\text{ a.e. on }\Omega, \text{ for all }u\in E,\]
\item[$(\F_5)$] there is a function $\eta\in L^1_{\text{loc}}(\Omega)$ such that for all bounded $M$ in $E$ and for a.a. $x\in\Omega$ the inequality holds \[\beta(F(x,M))\<\eta(x)\beta(M).\]
\end{itemize}

Regularity of the Niemytski\v{\i} operator $N_F$, necessary from our point of view, poses a consequence of the following
\begin{lemma}\label{nem}
Assume $(\mathbb{E})$. Under conditions $(\F_1)$-$(\F_5)$ the set--valued Nemytski\v{\i} operator $N_F\colon C(\Omega,E)\map\loc$ is a strict weakly upper semicontinuous set-valued map with weakly compact convex values.
\end{lemma}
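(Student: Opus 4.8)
The plan is to verify, one at a time, that the values of $N_F$ are nonempty and convex, that they are weakly compact, and finally that $N_F$ meets the sequential criterion for weak upper semicontinuity with weakly compact values recalled in the Introduction, whose proof carries over verbatim from a Banach target to the Fr\'echet target $\loc$ (the domain $C(\Omega,E)$ being metrizable). Fix $u\in C(\Omega,E)$. Coupling the measurability $(\F_2)$ in the first variable with the upper hemicontinuity $(\F_3)$ in the second yields superpositional measurability of $x\mapsto F(x,u(x))$, so a measurable selection theorem produces a strongly measurable selection, which by the growth bound $(\F_4)$ is locally Bochner integrable; hence $N_F(u)\neq\varnothing$ and $N_F$ is strict. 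Convexity of $N_F(u)$ descends pointwise from the convex values in $(\F_1)$.

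Weak compactness of $N_F(u)$ I would read off Theorem \ref{extension}. For every $w\in N_F(u)$ one has $|w(x)|\<||F(x,u(x))||^+\<b(x)(1+|u(x)|)$ a.e.; as $u$ is bounded on each $\w{\Omega}_n$ and $b\in L^1_{\text{loc}}(\Omega)$, the set $N_F(u)$ is locally integrably bounded. Feeding the singleton $M=\{u(x)\}$ into $(\F_5)$ gives $\beta(F(x,u(x)))\<\eta(x)\beta(\{u(x)\})=0$, so $F(x,u(x))$ is relatively compact and, being closed by $(\F_1)$, compact; thus the cross-sections $N_F(u)(x)\subset F(x,u(x))$ are relatively weakly compact, and Theorem \ref{extension} renders $N_F(u)$ relatively weakly compact in $\loc$. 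To close it weakly I note that $N_F(u)$ is strongly closed — a strongly convergent sequence of selections has an a.e. convergent subsequence, and closedness of the values keeps the limit in $F(\cdot,u(\cdot))$ — so, being convex, $N_F(u)$ is weakly closed by Mazur's theorem, hence weakly compact.

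For weak upper semicontinuity I would take $u_n\to u$ in the compact-open topology and $w_n\in N_F(u_n)$. Compact-open convergence bounds $\z{u_n}$ uniformly on each $\w{\Omega}_m$, so $(\F_4)$ makes $\x{w_n}$ locally integrably bounded; moreover, for a.a. $x$ the set $M_x:=\{u_n(x)\colon\n\}\cup\{u(x)\}$ is compact, whence $(\F_5)$ gives $\beta(F(x,M_x))\<\eta(x)\beta(M_x)=0$ and the cross-sections $\{w_n(x)\colon\n\}\subset F(x,M_x)$ are relatively (weakly) compact. Theorem \ref{extension} then extracts a subsequence with $w_{k_n}\xrightharpoonup[n\to\infty]{\loc}w$.

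The identification $w\in N_F(u)$ is the crux and the step I expect to be hardest. I would invoke Mazur's theorem in $\loc$ to obtain convex combinations $\w{w}_m\in\co\{w_{k_j}\colon j\geqslant m\}$ with $\w{w}_m\to w$ strongly and, along a further subsequence, $\w{w}_m(x)\to w(x)$ a.e. Since the sets $\overline{\co}\bigcup_{j\geqslant m}F(x,u_{k_j}(x))$ decrease in $m$ and each contains $\w{w}_m(x)$, the limit satisfies $w(x)\in\bigcap_m\overline{\co}\bigcup_{j\geqslant m}F(x,u_{k_j}(x))$ for a.a. $x$. Evaluating support functionals, for every $x^*\in E^*$ one gets $\langle x^*,w(x)\rangle\<\limsup_j\sigma(x^*,F(x,u_{k_j}(x)))\<\sigma(x^*,F(x,u(x)))$, the last step by the upper semicontinuity of $\sigma(x^*,F(x,\cdot))$ from $(\F_3)$ together with $u_{k_j}(x)\to u(x)$. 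Crucially, $(\F_3)$ supplies this upper semicontinuity for all $x^*$ on a single conull set, so the separation theorem applied to the closed convex value $F(x,u(x))$ forces $w(x)\in F(x,u(x))$ a.e., that is $w\in N_F(u)$; the sequential criterion then certifies that $N_F$ is weakly upper semicontinuous with weakly compact values. The standing hypothesis $(\mathbb{E})$ intervenes only through Theorem \ref{extension} and the strong measurability of the selections.
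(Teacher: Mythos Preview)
Your argument for weak upper semicontinuity is essentially the paper's, with the cited ``convergence theorem'' unpacked inline via Mazur and support functionals; that part is fine. There is, however, a genuine gap in your treatment of nonemptiness, and a smaller omission in the passage from weak compactness to sequential weak compactness.

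\textbf{Nonemptiness.} Hypothesis $(\F_2)$ does \emph{not} assert measurability of the multimap $F(\cdot,u)$; it only guarantees, for each fixed $u\in E$, the existence of one strongly measurable selection. Coupling this with the very weak regularity in $(\F_3)$ (upper hemicontinuity) does not deliver ``superpositional measurability'' of $x\mapsto F(x,u(x))$ for a general continuous $u$, so the appeal to a measurable selection theorem is not justified. The paper circumvents this: on each ring $\w{\Omega}_n$ it approximates $u$ by simple functions $u^n_{m_k}$ (for which $(\F_2)$ applies piecewise), obtains selections $w^n_{m_k}\in F(\cdot,u^n_{m_k}(\cdot))$, uses $(\F_4)$ and $(\F_5)$ together with \"Ulger's Theorem~\ref{ulger} to extract a weak $L^1(\w{\Omega}_n,E)$-limit $w^n$, and then invokes a convergence theorem to land $w^n(x)\in F(x,u(x))$ a.e.; the pieces are glued via Lemma~\ref{iso}. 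In other words, already the nonemptiness of $N_F(u)$ requires the weak-compactness machinery, not just a selection theorem.

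\textbf{Sequential extraction.} After Theorem~\ref{extension} hands you relative weak compactness of $\x{w_n}$ in $\loc$, you pass directly to a weakly convergent subsequence. In a non-normable Fr\'echet space this step is not automatic: weak compactness need not be sequential. The paper explicitly notes that $\loc$, being a metrizable locally convex space, is weakly angelic (Cascales--Orihuela), which legitimises the extraction. You should insert the same remark.

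With these two fixes your outline matches the paper's proof; the Mazur/support-functional identification of the limit is equivalent to the convergence theorem the paper cites and needs no change.
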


\begin{proof}
Assume that $\z{\Omega_n}$ is an exhaustion of $\Omega$. Let $u\in C(\Omega,E)$ and $u_n:=\res{u}{\w{\Omega}_n}$. There is a sequence $(u_k^n)_{k=1}^\infty$ of $\ell$-simple functions, which converges to $u_n$ in the norm of $L^\infty(\w{\Omega}_n,E)$. In particular, for each $\n$ there exists a sequence $(m_k^n)_{n=1}^\infty$ such that $u^n_{m_k^n}(x)\xrightarrow[k\to\infty]{E}u_n(x)$ a.e. on $\w{\Omega}_n$. Accordingly to the assumption $(\F_2)$ we can indicate a strongly measurable map $w_{m_k^n}^n\colon\w{\Omega}_n\to E$ such that $w_{m_k^n}^n(x)\in F(x,u_{m_k^n}^n(x))$ a.e. on $\w{\Omega}_n$. Since \[\sup_\K|w_{m_k^n}^n(x)|\<\sup_\K||F(x,u_{m_k^n}^n(x))||^+\<b(x)\f(1+\sup_\K||u_{m_k^n}^n||_{L^\infty(\w{\Omega}_n,E)}\g)\;\;\text{a.e. on }\w{\Omega}_n\] and the slice $\big\{w_{m_k^n}^n(x)\big\}_{k=1}^\infty$ is relatively weakly compact in $E$ as a subset of $F\f(x,\{\overline{u_{m_k^n}^n(x)}\}_{k=1}^\infty\g)$, it follows, from Theorem \ref{ulger}, that $w_{m_k^n}^n\xrightharpoonup[k\to\infty]{L^1(\w{\Omega}_n,E)}w^n$, up to a subsequence. In view of the convergence theorem (\cite[Corollary 1]{pietkun}), $w^n(x)\in F(x,u_n(x))$ for $x\in\w{\Omega}_n\setminus I_n$ with $\ell(I_n)=0$. Put $w:=(w^n)_{n=1}^\infty$. By Lemma \ref{iso}, $w\in\loc$. Observe that $\ell\big(\bigcup\limits_{n=1}^\infty I_n\big)=0$ and $w(x)\in F(x,u(x))$ for $x\in\bigcup\limits_{n=1}^\infty\w{\Omega}_n\setminus I_n=\Omega\setminus\bigcup\limits_{n=1}^\infty I_n$. In other words, $w\in N_F(u)$.\par Assume that $u_n\xrightarrow[n\to\infty]{C(\Omega,E)}u$ and $w_n\in N_F(u_n)$ for $\n$. Clearly, the set $\x{w_n}$ is locally integrably bounded and the the cross-section $\x{w_n(x)}$ is relatively weakly compact in $E$ for a.a. $x\in\Omega$. Therefore, $\x{w_n}$ must be relatively weakly compact in $\loc$, by virtue of Theorem \ref{extension}. Since $\loc$ is metrizable locally convex space, it is weakly angelic (see \cite[Theorem 11]{orihuela}). Thus, $\x{w_n}$ is relatively sequentially compact in the weak topology. We may assume, passing to a subsequence if necessary, that $w_n\xrightharpoonup[n\to\infty]{\loc}w$. Since for each $\K$
\[\begin{cases}
\res{u_n}{\Omega_k}(x)\xrightarrow[n\to\infty]{E}\res{u}{\Omega_k}(x),&\text{for }x\in\Omega_k\\
\res{w_n}{\Omega_k}\xrightharpoonup[n\to\infty]{L^1(\Omega_k,E)}\res{w}{\Omega_k}\\
\res{w_n}{\Omega_k}(x)\in F\big(x,\res{u_n}{\Omega_k}(x)\big),&\text{a.e. on }\Omega_k,
\end{cases}\]
it follows that $\res{w}{\Omega_k}(x)\in F\big(x,\res{u}{\Omega_k}(x)\big)$ a.e. on $\Omega_k$ for every $\K$, by the convergence theorem (\cite[Corollary 1]{pietkun}). Eventually $w\in N_F(u)$, which means that the Nemytski\v{\i} operator $N_F$ is a weakly upper semicontinuous operator with weakly compact values.
\end{proof}
For the purpose of showing that $V\circ N_F$ is upper semicontinuous we have to prove 
\begin{lemma}
Assume that ${\mathbf\Omega}(\Lambda)\neq\varnothing$. Under conditions $(\ka_1)$-$(\ka_2)$ the operator $V$ is continuous.
\end{lemma}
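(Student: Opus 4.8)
The plan is to exploit the linearity of $V$: a linear map between Fr\'echet spaces is continuous precisely when every seminorm of the target is dominated by a seminorm of the source. I would fix once and for all a $\Lambda$-invariant exhaustion $\z{\Omega_n}\in{\mathbf\Omega}(\Lambda)$, which exists by hypothesis. By Lemma \ref{iso} the topology of $\loc$ is generated by the seminorms $p_n(w):=\int_{\cl_\Omega(\Omega_n)}|w(y)|\,dy$, while the compact-open topology of $C(\Omega,E)$ is generated by $q_n(u):=\sup_{x\in\cl_\Omega(\Omega_n)}|u(x)|$. It therefore suffices to bound $q_n(V(w))$ by a multiple of $p_n(w)$ for every $\n$.

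First I would check that $V(w)$ genuinely lands in $C(\Omega,E)$. Fix $x_0\in\Omega$, pick $n$ with $x_0\in\Omega_n$, and restrict attention to $x\in\Omega_n$. Writing
\[V(w)(x)-V(w)(x_0)=\int_{\Lambda(x)}\f[k(x,y)-k(x_0,y)\g]w(y)\,dy+\f(\int_{\Lambda(x)}-\int_{\Lambda(x_0)}\g)k(x_0,y)w(y)\,dy,\]
the first summand is majorised by $||K(x)-K(x_0)||_{L^\infty_n}\,p_n(w)$, which tends to $0$ by the continuity hypothesis $(\ka_2)$, whereas the second is majorised by $||K(x_0)||_{L^\infty_n}\int_{\Lambda(x)\triangle\Lambda(x_0)}|w(y)|\,dy$; since $\Lambda$ is $\rho$-continuous, $\ell(\Lambda(x)\triangle\Lambda(x_0))\to 0$ as $x\to x_0$, so the absolute continuity of the Bochner integral forces this term to vanish as well. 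Throughout, the $\Lambda$-invariance of $\z{\Omega_n}$ keeps every integration set inside $\Omega_n$, so that the relevant quantities are the $L^\infty_n$-norms of the kernel and the truncated $L^1$-norm of $w$.

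For the continuity of the operator itself, take $x\in\Omega_n$. Then $\Lambda(x)\subset\Omega_n$ by $\Lambda$-invariance, and $(\ka_1)$ together with the trivial extension by zero yields
\[|V(w)(x)|\<\int_{\Lambda(x)}||k(x,y)||_{\mathcal L}|w(y)|\,dy\<||K(x)||_{L^\infty_n}\int_{\Lambda(x)}|w(y)|\,dy\<||K(x)||_{L^\infty_n}\,p_n(w).\]
By $(\ka_2)$ the scalar function $x\mapsto||K(x)||_{L^\infty_n}$ is continuous on $\Omega$, hence it attains a finite maximum $M_n$ on the compact set $\cl_\Omega(\Omega_n)$. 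Passing to the supremum over $x\in\Omega_n$ gives $q_n(V(w))\<M_n\,p_n(w)$, which is exactly the required domination, and linearity then delivers continuity of $V$.

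I expect the crux to be the control of the variable integration domain $\Lambda(x)$, and this is precisely where the two structural assumptions enter: the $\Lambda$-invariance of the exhaustion confines $\Lambda(x)$ to $\Omega_n$ (so that only the $n$-th seminorm of $w$ is needed and $M_n<\infty$ thanks to compactness of $\cl_\Omega(\Omega_n)$), while the $\rho$-continuity of $\Lambda$ tames the domain-variation term in the well-definedness argument. The remaining steps are routine applications of $(\ka_1)$ and $(\ka_2)$.
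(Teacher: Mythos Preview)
Your argument is correct and follows essentially the same route as the paper: the well-definedness step uses the identical two-term decomposition (kernel variation controlled by $(\ka_2)$, domain variation controlled by $\rho$-continuity of $\Lambda$ and absolute continuity of the integral), and the continuity step rests on the same estimate $\sup_{x\in\Omega_n}|V(w)(x)|\<\sup_{x\in\Omega_n}||K(x)||_{L^\infty_n}\,||w||_{L^1(\cl_\Omega(\Omega_n),E)}$. The only cosmetic difference is that you phrase continuity via seminorm domination for a linear map, whereas the paper writes out the equivalent sequential version directly.
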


\begin{proof}
Let $\z{\Omega_n}\in{\mathbf\Omega}(\Lambda)$. Operator $V$ is well-defined. Let $x_n\xrightarrow[n\to\infty]{\Omega}x$. Then
\begin{align*}
|V(w)(x_n)-V(w)(x)|&=\f|\,\int\limits_{\Lambda(x_n)}k(x_n,y)w(y)\,dy-\int\limits_{\Lambda(x)}k(x,y)w(y)\,dy\g|\\&\<\int\limits_{\cl_\Omega(\Omega_k)}\f|k(x_n,y)w(y){\bf 1}_{\Lambda(x_n)}-k(x,y)w(y){\bf 1}_{\Lambda(x)}\g|\,dy\\&\<\int\limits_{\cl_\Omega(\Omega_k)}||k(x_n,y)-k(x,y)||_{{\mathcal L}(E)}|w(y)|\,dy\\&+\int\limits_{\cl_\Omega(\Omega_k)}||k(x,y)||_{{\mathcal L}(E)}|w(y)|{\bf 1}_{\Lambda(x_n)\triangle\Lambda(x)}(y)\,dy\\&\<||K(x_n)-K(x)||_{L^\infty(\cl_\Omega(\Omega_k),{\mathcal L}(E))}||w||_{L^1(\cl_\Omega(\Omega_k),E)}\\&+||K(x)||_{L^\infty(\cl_\Omega(\Omega_k),{\mathcal L}(E))}\int\limits_{\Lambda(x_n)\triangle\Lambda(x)}|w(y)|\,dy=:\alpha_n
\end{align*}
with $\Lambda\f(\overline{\x{x_n}}\g)\subset\Omega_k$ for some $k\in\mathbb{N}$. Since $K(x_n)\xrightarrow[n\to\infty]{L^\infty\big(\cl_\Omega(\Omega_k),{\mathcal L}(E)\big)}K(x)$ and $\Lambda$ is continuous, i.e. $\ell(\Lambda(x_n)\triangle\Lambda(x))\xrightarrow[n\to\infty]{}0$, we see that $\alpha_n\xrightarrow[n\to\infty]{}0$. Hence, $V(w)\in C(\Omega,E)$.\par Suppose that $w_k\xrightarrow[k\to\infty]{\loc}w$. Fix an arbitrary $n\in\mathbb{N}$. Then $\Lambda(\Omega_n)\subset\Omega_n$ and
\begin{align*}
\sup_{x\in\cl_\Omega(\Omega_n)}|V(w_k)(x)-V(w)(x)|&=\sup_{x\in\Omega_n}|V(w_k)(x)-V(w)(x)|\\&=\sup_{x\in\Omega_n}\f|\;\int\limits_{\Lambda(x)}k(x,y)w_k(y)\,dy-\int\limits_{\Lambda(x)}k(x,y)w(y)\,dy\g|\\&\<\sup_{x\in\Omega_n}\,\int\limits_{\Lambda(x)}||k(x,y)||_{{\mathcal L}(E)}|w_k(y)-w(y)|\,dy\\&\<\sup_{x\in\Omega_n}\;\int\limits_{\cl_\Omega(\Omega_n)}||k(x,y)||_{{\mathcal L}(E)}|w_k(y)-w(y)|\,dy\\&\<\sup_{x\in\Omega_n}||K(x)||_{L^\infty\big(\cl_\Omega(\Omega_n),{\mathcal L}(E)\big)}||w_k-w||_{L^1\big(\cl_\Omega(\Omega_n),E\big)}.
\end{align*}
Since $K\in C\f(\Omega,L^\infty\big(\cl_\Omega(\Omega_n),{\mathcal L}(E)\big)\g)$ is continuous, $\sup\limits_{x\in\Omega_n}||K(x)||_{L^\infty\big(\cl_\Omega(\Omega_n),{\mathcal L}(E)\big)}<+\infty$. Thus $V(w_k)\xrightarrow[k\to\infty]{C(\Omega,E)}V(w)$, which means that the integral operator $V\colon\loc\to C(\Omega,E)$ is continuous.
\end{proof}
The hereunder multivalued Volterra integral equation with inhomogeneity presents a version of inclusion \eqref{general}, to which the first result regarding the existence of solutions is devoted.
\begin{equation}\label{inclusion2}
u(x)\in g(x,u(x))+\int\limits_{\Lambda(x)}k(x,y)F(y,u(y))\,dy,\;\;x\in\Omega
\end{equation}
Put $||\cdot||_n:=||\cdot||_{C\f(\cl_\Omega(\Omega_n),E\g)}$ and \[\phi:=\big\{\varphi\colon\R{}_+\to\R{}_+\colon\varphi\text{ is nondecreasing usc and satisfies \eqref{concave}}\big\}.\]
\begin{equation}\label{concave}
\forall\,x>0\;\;\lim\limits_{n\to\infty}\varphi^n(x)=0
\end{equation}

Our hypotheses on the mapping $g\colon\Omega\times E\to E$ are as follows:
\begin{itemize}
\item[$(\gie_1)$] $g$ is uniformly continuous on bounded subsets of $\Omega\times E$,
\item[$(\gie_2)$] there exists a concave $\varphi\in\phi$ satisfying 
\begin{equation}\label{10}
\limsup\limits_{x\to 0^+}\,\frac{\varphi(x)}{x}<1
\end{equation}
for which \[|g(x,u)-g(x,w)|\<\varphi(|u-w|)\] for all $u,w\in E$ and $x\in\Omega$.
\end{itemize}

\begin{theorem}\label{ex1}
Assume ${\bm\Omega}(\Lambda)\neq\varnothing$ and ${\bm\tau}(\Lambda)\neq\varnothing$. Let $(\mathbb{E})$ be satisfied. Suppose that hypotheses $(\ka_1)$-$(\ka_2)$, $(\gie_1)$-$(\gie_2)$ and $(\F_1)$-$(\F_5)$ hold, together with 
\begin{equation}\label{brzeg}
\liminf_{n\to\infty}\,\f(a_n-\varphi(a_n)-||g(\cdot,0)||_n\g)>0
\end{equation}
for some $\z{a_n}\in\R{\mathbb{N}}_+$. Then the Volterra integral inclusion \eqref{inclusion2} has at leat one continuous solution.
\end{theorem}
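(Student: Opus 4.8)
The plan is to recast solutions of \eqref{inclusion2} as fixed points of the set-valued operator $\mathcal{T}\colon C(\Omega,E)\map C(\Omega,E)$,
\[\mathcal{T}:=N_g\circ\f(I\times(V\circ N_F)\g),\qquad \mathcal{T}(u)=\f\{g(\cdot,u(\cdot))+V(w)\colon w\in N_F(u)\g\},\]
and then to apply Theorem \ref{fixed}. First I would verify that $\mathcal{T}$ is well defined with convex values contained in $C(\Omega,E)$: by Lemma \ref{nem} the set $N_F(u)$ is a nonempty convex subset of $\loc$, the preceding lemma carries it continuously into $C(\Omega,E)$ through $V$, and $g(\cdot,u(\cdot))\in C(\Omega,E)$ by $(\gie_1)$; convexity survives because $V$ is linear and $g(\cdot,u(\cdot))$ is a fixed translate.

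Next I would establish admissibility. By Lemma \ref{nem} the operator $N_F$ is weakly upper semicontinuous with weakly compact convex values, and Theorem \ref{extension} guarantees that it sends bounded sets to relatively weakly compact subsets of $\loc$. Applying $(\F_5)$ to singletons forces each value $F(y,u(y))$ to be norm compact, whence the cross-sections $\f\{V(w)(x)\colon w\in N_F(u)\g\}$ are relatively norm compact in $E$; combined with the equicontinuity of $\f\{V(w)\colon w\in N_F(u)\g\}$ --- a consequence of the $\rho$-continuity of $\Lambda$ and $(\ka_1)$--$(\ka_2)$ --- the Ascoli theorem shows that $V(N_F(u))$ is compact and convex, hence acyclic. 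Together with upper semicontinuity of $V\circ N_F$ (which follows from the weak upper semicontinuity of $N_F$, the weak-to-weak continuity of the linear map $V$, and the same Ascoli argument), this makes $V\circ N_F$ admissible; adding the single-valued continuous map $u\mapsto g(\cdot,u(\cdot))$ keeps $\mathcal{T}$ admissible.

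The third step is to exhibit a nonempty closed bounded convex set $X\subset C(\Omega,E)$ with $\mathcal{T}(X)\subset X$. Fixing $\z{\Omega_n}\in{\bm\Omega}(\Lambda)$ and $\tau\in{\bm\tau}(\Lambda)$, I would take $X:=\f\{u\colon\sup_{x\in\Omega_n}e^{-L_n\tau(x)}|u(x)|\< a_n\text{ for all }n\g\}$ with $\z{a_n}$ as in \eqref{brzeg}. For $u\in X$ and $v\in\mathcal{T}(u)$ the bound
\[|v(x)|\<\varphi(|u(x)|)+|g(x,0)|+\int\limits_{\Lambda(x)}||k(x,y)||_{{\mathcal L}(E)}b(y)(1+|u(y)|)\,dy\]
follows from $(\gie_2)$, $(\ka_1)$ and $(\F_4)$. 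Multiplying by $e^{-L_n\tau(x)}$ and using $\Lambda$-invariance $\Lambda(x)\subset\Omega_n$, the concavity of $\varphi$ turns the first term into a quantity $\<\varphi(a_n)$, the second into $\<||g(\cdot,0)||_n$, while Lemma \ref{lemcon} drives the two integral contributions below $\eps a_n$ and $\eps$ respectively, once each $L_n$ is large. The strictly positive slack granted by \eqref{brzeg} then absorbs these remainders and yields $\mathcal{T}(X)\subset X$.

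Finally I would check the condensing condition \eqref{cond} for $\mu=\nu_L^N$. Given $M\subset X$ with $\overline M$ noncompact, I would bound the two constituents of each coordinate separately: the equicontinuity modulus obeys $e_n(\mathcal{T}M)\<\varphi(e_n(M))$ --- the family $\f\{V(w)\g\}$ being equicontinuous contributes nothing and the $g$-part is handled by $(\gie_1)$--$(\gie_2)$ --- while the standard estimate of $\beta$ under the Bochner integral together with $(\F_5)$, after pulling out the exponential weight and invoking Lemma \ref{lemcon}, gives $\beta_{L_n}(\mathcal{T}M)\<\varphi(\beta_{L_n}(M))+C_n(L_n)\beta_{L_n}(M)$ with $C_n(L_n)\to0$ as $L_n\to\infty$; here concavity of $\varphi$ again permits moving the weight inside $\varphi$. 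Since $X$ is bounded, \eqref{10} yields a constant $\theta_n<1$ with $\varphi(s)\<\theta_n s$ on the relevant range, so $[\nu_L^N(\mathcal{T}M)]_n\<\varphi(\beta_{L_n}(M))+\varphi(e_n(M))+C_n(L_n)\beta_{L_n}(M)\<(\theta_n+C_n(L_n))\,[\nu_L^N(M)]_n$. Choosing each $L_n$ so large that $k_n:=\theta_n+C_n(L_n)<1$ and taking the comparison function $f$ of Example \ref{ex}, condition \eqref{cond} is satisfied, and Theorem \ref{fixed} delivers a nonempty compact $\fix(\mathcal{T})$; every element of it is a continuous solution of \eqref{inclusion2}. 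The crux of the argument --- and the step I expect to be hardest --- is the joint taming of the Volterra term: both its linear growth in the construction of $X$ and its contribution to $\beta_{L_n}$ in the condensing estimate are neutralised only through the exponential weight and Lemma \ref{lemcon}, and reconciling the $\varphi$-contraction of $g$ with the weighted ball measure hinges on the concavity of $\varphi$ together with \eqref{10}.
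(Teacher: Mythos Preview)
Your proposal is correct and follows the paper's own proof: the same operator $H=N_g+V\circ N_F$, the same weighted-seminorm invariant set \eqref{X}, the same condensing estimates for $\beta_{L_n}$ and $e_n$ via $(\F_5)$, concavity of $\varphi$ and Lemma \ref{lemcon}, and the same appeal to Theorem \ref{fixed}. Two small deviations worth noting: the paper decouples the weight $L$ used to build $\mathcal X$ from a second weight $\hat L$ used in the MNC $\nu^N_{\hat L}$ and employs a comparison function built from $\psi_n(x)=\varphi(x)+k_nx$ rather than the linear $f$ of Example \ref{ex} (your linearisation is legitimate because concavity and \eqref{10} give a global bound $\varphi(x)\le\theta x$ with $\theta<1$), and since \eqref{brzeg} is only a $\liminf$ your set $X$ and the MNC index must be restricted to $n\ge N$ for some $N$, not taken over all $n$.
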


\begin{remark}
If $\Omega$ is bounded, then $C(\Omega,E)$ with the usual supremum norm is a Banach space. In these circumstances condition \eqref{brzeg} amounts to the existence of an $r>0$, which satisfies \[\varphi(r)+\sup_{x\in\Omega}|g(x,0)|<r.\] In this form, it resembles very much condition $(3.20)$ in \cite[Lemma 3.5]{khandani}.
\end{remark}

\begin{example}
Fix $k\in(0,1)$.
\begin{itemize}
\item[(i)] Let $\varphi\colon\R{}_+\to\R{}_+$ be given by $\varphi(x):=kx$. 
\item[(ii)] Define $\varphi\colon\R{}_+\to\R{}_+$ by $\varphi(x):=\arctan(kx)$.
\end{itemize}
In both cases $\varphi$ is concave, belongs to the class $\phi$ and satisfies \eqref{10}.
\end{example}

\begin{proof}
Take $\z{\Omega_n}\in{\bm\Omega}(\Lambda)$ and $\tau\in{\bm\tau}(\Lambda)$. From Lemma \ref{lemcon} and assumption \eqref{brzeg} follows the existence of $N\in\mathbb{N}$ and $L\in\R{\mathbb{N}\setminus\{1,\ldots,N-1\}}_+$, for which the following inequality is satisfied:
\begin{equation}\label{fi}
\Phi(L_n,b)_n\<\frac{a_n-\varphi(a_n)-||g(\cdot,0)||_n}{\sup\limits_{x\in\Omega_n}||K(x)||_{L^\infty\big(\cl_\Omega(\Omega_n),\mathcal{L}(E)\big)}(1+a_n)}.
\end{equation}
We may assume w.l.o.g. that $(L_n)_{n=N}^\infty$ in nondecreasing. Let $H\colon C(\Omega,E)\map C(\Omega,E)$ be given by the formula $H:=N_g+V\circ N_F$. We will show the non-emptiness of $\fix(H)$ with the aid of a routine renorming technique. Namely, let \[||u||_{L_n}:=\sup\limits_{x\in\Omega_n}e^{-L_n\tau(x)}|u(x)|\;\text{ for }u\in C(\Omega,E).\] Clearly, the family $\{||\cdot||_{L_n}\}_{n=N}^\infty$ generates the same compact-open topology on $C(\Omega,E)$, since $||\cdot||_{L_n}\<||\cdot||_n\<e^{L_n\sup\tau(\Omega_n)}||\cdot||_{L_n}$.\par Put 
\begin{equation}\label{X}
{\mathcal X}:=\bigcap_{n=N}^\infty\f\{u\in C(\Omega,E)\colon ||u||_{L_n}\<a_n\g\}.
\end{equation}
It is easy to see that ${\mathcal X}$ forms closed and convex subset of the space $C(\Omega,E)$. Obviously, ${\mathcal X}$ is topologically bounded, since it is bounded with respect to each seminorm $||\cdot||_{L_n}$. We claim that ${\mathcal X}$ is invariant under the operator $H$. Fix $v\in H({\mathcal X})$. Then $v=N_g(u)+V(w)$ for some $w\in N_F(u)$ and $u\in{\mathcal X}$. One easily sees that
\begin{align*}
|v(x)|&\<|g(x,u(x))|+\int\limits_{\Lambda(x)}||k(x,y)||_{{\mathcal L}(E)}|w(y)|\,dy\\&\<\varphi(|u(x)|)+||g(\cdot,0)||_n+\int\limits_{\Lambda(x)}||k(x,y)||_{{\mathcal L}(E)}b(y)(1+|u(y)|)\,dy.
\end{align*}
Concavity of $\varphi$ entails $\lambdaup\varphi(x)\<\varphi(\lambdaup x)$ for $\lambdaup\in(0,1)$ and $x\in\R{}_+$. Hence, for each $n\geqslant N$ one has
\begin{equation}\label{invariant}
\begin{split}
||v||_{L_n}&\<\varphi(||u||_{L_n})+||g(\cdot,0)||_n+\sup_{x\in\Omega_n}||K(x)||_{L^\infty_n}\Phi(L_n,b)_n\f(1+||u||_{L_n}\g)\\&\<\varphi(a_n)+||g(\cdot,0)||_n+\sup_{x\in\Omega_n}||K(x)||_{L^\infty_n}\Phi(L_n,b)_n(1+a_n)\<a_n,
\end{split}
\end{equation}
where $L^\infty_n:=L^\infty\big(\cl_\Omega(\Omega_n),\mathcal{L}(E)\big)$, by \eqref{fi}.
\par Now, we will show that $H\colon{\mathcal X}\map{\mathcal X}$ is acyclic. To this aim assume that $u_n\xrightarrow[n\to\infty]{C(\Omega,E)}u$, $v_n=N_g(u_n)+V(w_n)$ and $w_n\in N_F(u_n)$ for $\n$. By virtue of \cite[Th.3.12.]{kunze}, the following estimate remains in force 
\begin{align*}
\sup_{x\in\Omega_k}\beta\f(\x{V(w_n)(x)}\g)&=\sup_{x\in\Omega_k}\beta\f(\f\{\,\int\limits_{\Lambda(x)}\!\!k(x,y)w_n(y)\,dy\g\}_{n=1}^\infty\g)\<4\sup_{x\in\Omega_k}\overline{\int\limits_{\Lambda(x)}}\!\beta\f(k(x,y)\x{w_n(y)}\g)dy\\&\<4\sup_{x\in\Omega_k}\overline{\int\limits_{\Lambda(x)}}||k(x,y)||_{{\mathcal L}(E)}\,\beta\f(F\f(y,\x{u_n(y)}\g)\g)\,dy.
\end{align*}
Since $\beta\f(F\f(x,\x{u_n(x)}\g)\g)\<\eta(x)\beta\f(\x{u_n(x)}\g)=0$ for a.a. $x\in\Omega$, we conclude that $\sup\limits_{x\in\Omega_k}\beta\f(\x{V(w_n)(x)}\g)=0$ for $\K$.
On the other hand, we have
\begin{equation}\label{V}
\begin{split}
\sup_{x\in\Omega_k}\overline{\lim_{z\to x}}\sup_\n|V(w_n)(x)-V(w_n)(z)|&=\sup_{x\in\Omega_k}\overline{\lim_{z\to x}}\sup_\n\f|\,\int\limits_{\Lambda(x)}k(x,y)w_n(y)\,dy-\int\limits_{\Lambda(z)}k(z,y)w_n(y)\,dy\g|\\&\<\sup_{x\in\Omega_k}\overline{\lim_{z\to x}}\sup_\n\int\limits_{\Omega_k}||k(x,y)-k(z,y)||_{{\mathcal L}(E)}|w_n(y)|{\bf 1}_{\Lambda(x)}\,dy\\&+\sup_{x\in\Omega_k}\overline{\lim_{z\to x}}\sup_\n\int\limits_{\Omega_k}||k(z,y)||_{{\mathcal L}(E)}|w_n(y)|\f|{\bf 1}_{\Lambda(x)}-{\bf 1}_{\Lambda(z)}\g|\,dy\\&\<R\sup_{x\in\Omega_k}\overline{\lim_{z\to x}}||K(x)-K(z)||_{L^\infty\big(\overline{\Omega}_k,{\mathcal L}(E)\big)}||b||_{L^1\big(\overline{\Omega}_k\big)}\\&+R\sup_{x\in\Omega_k}\overline{\lim_{z\to x}}||K(z)||_{L^\infty\big(\overline{\Omega}_k,{\mathcal L}(E)\big)}\int\limits_{\Lambda(x)\triangle\Lambda(z)}b(y)\,dy\\&=R\sup_{x\in\Omega_k}||K(x)||_{L^\infty\big(\overline{\Omega}_k,{\mathcal L}(E)\big)}\overline{\lim_{z\to x}}\int\limits_{\Lambda(x)\triangle\Lambda(z)}b(y)\,dy=0,
\end{split}
\end{equation}
where $R:=1+\sup\limits_\n||u_n||_k$. Therefore, the family $\f\{N_g(u_n)+V(w_n)\g\}_{n=1}^\infty$ forms a relatively compact subset of $C(\Omega,E)$. Consequently, there exists $\z{m_n}\in\mathbb{N}^\mathbb{N}$ such that $v_{m_n}=N_g(u_{m_n})+V(w_{m_n})\xrightarrow[n\to\infty]{C(\Omega,E)}z$. In view of Lemma \ref{nem}, $w_{m_n}\xrightharpoonup[n\to\infty]{\loc}w\in N_F(u)$, up to a subsequence. Taking into account that $V\in C\f((\loc,w),(C(\Omega,E),w)\g)$ and $N_g\in C(C(\Omega,E),C(\Omega,E))$, one may deduce $v_{m_n}=N_g(u_{m_n})+V(w_{m_n})\xrightharpoonup[n\to\infty]{C(\Omega,E)}N_g(u)+V(w)$. Eventually, $v_{m_n}\xrightarrow[n\to\infty]{C(\Omega,E)}N_g(u)+V(w)\in H(u)$. Summing up, $H$ is an upper semicontinuous operator with compact and convex values. \par Put $r_n:=\exp\big(L_n\sup\tau(\Omega_n)\big)\cdot a_n$ for $n\geqslant N$. Upper semicontinuity of $\varphi$ and assumption \eqref{10} imply \[\forall\,n\geqslant N\;\exists\,\lambdaup_n\in(0,1)\;\;\sup_{0<x\<r_n}\frac{\varphi(x)}{x}\<\lambdaup_n.\] Whence 
\begin{equation}\label{r_n}
\forall\,n\geqslant N\;\exists\,k_n\in(0,1)\;\forall\,x\in(0,r_n]\;\;\varphi(x)+k_nx<x.
\end{equation}
In view of Lemma \ref{lemcon} there exists $\hat{L}\in\R{\mathbb{N}\setminus\{1,\ldots,N-1\}}_+$ such that \[4\sup\limits_{x\in\Omega_n}||K(x)||_{L^\infty_n}\Phi(\hat{L}_n,\eta)_n<k_n\] for $n\geqslant N$. Let $\psi_n\colon\R{}_+\to\R{}_+$ be such that $\psi_n(x):=\varphi(x)+k_nx$. Notice that $\psi_n$ is concave and for all $x\in\R{}_+$ one has $\psi_n(x)-x\<0$, by \eqref{r_n} and \eqref{concave} (actually, $\psi_n(x)<x$ for $x>0$). Define $f\colon\R{\mathbb{N}\setminus\{1,\ldots,N-1\}}_+\times\R{\mathbb{N}\setminus\{1,\ldots,N-1\}}_+\to\big(\R{\mathbb{N}\setminus\{1,\ldots,N-1\}},\geqslant\big)$ by the formulae \[f\f((x_n)_{n=N}^\infty,(y_n)_{n=N}^\infty\g)=(\psi_n(y_n)-x_n)_{n=N}^\infty.\] Clearly, $f\in\Phi$. Our next goal is to show that the operator $H\colon{\mathcal X}\map {\mathcal X}$ meets the assumption \eqref{cond} of Theorem~\ref{fixed} in the context of some measure of nonequicontinuity and the mapping $f$.\par Suppose that $M\subset{\mathcal X}$ is not relatively compact. Note that 
\begin{equation}\label{M(x)}
\sup_{x\in\Omega_n}||M(x)||^+\<\exp\big(L_n\sup\tau(\Omega_n)\big)||M||_{L_n}^+\<r_n
\end{equation}
for each $n\geqslant N$. Since $\beta(g(x,M(x)))\<\varphi\f(\eps+\beta(M(x))\g)$ for every $\eps>0$ one has \[\beta(g(x,M(x)))\<\limsup_{\eps\to 0^+}\varphi(\eps+\beta(M(x)))\<\varphi(\beta(M(x)))\] for $x\in\Omega$. On the other hand, for each $x\in\Omega_n$ and $\n$ one has \[\beta\f(N_F(M)(x)\g)\<\beta(F(x,M(x)))\<\eta(x)\beta(M(x))\<e^{\hat{L}_n\tau(x)}\eta(x)\beta_{\hat{L}_n}(M).\] Taking into account above findings, one sees that
\begin{equation}\label{szacowanie}
\begin{split}
\beta_{\hat{L}_n}&(H(M))\\&\<\sup_{x\in\Omega_n}e^{-\hat{L}_n\tau(x)}\f(\varphi\f(\beta(M(x))\g)+\beta\f(\f\{\,\int\limits_{\Lambda(x)}k(x,y)w(y)\,dy\colon w\in N_F(M)\g\}\g)\g)\\&\<\sup_{x\in\Omega_n}e^{-\hat{L}_n\tau(x)}\varphi(\beta(M(x)))+4\sup_{x\in\Omega_n}e^{-\hat{L}_n\tau(x)}\overline{\int\limits_{\Lambda(x)}}||k(x,y)||_{{\mathcal L}(E)}\,\beta\f(N_F(M)(y)\g)\,dy\\&\<\varphi\f(\beta_{\hat{L}_n}(M)\g)+4\sup_{x\in\Omega_n}||K(x)||_{L^\infty_n}\sup_{x\in\Omega_n}e^{-\hat{L}_n\tau(x)}\int\limits_{\Lambda(x)}e^{\hat{L}_n\tau(y)}\eta(y)\,dy\,\beta_{\hat{L}_n}(M)\\&=\varphi\f(\beta_{\hat{L}_n}(M)\g)+4\sup_{x\in\Omega_n}||K(x)||_{L^\infty_n}\Phi(\hat{L}_n,\eta)_n\,\beta_{\hat{L}_n}(M)\<\varphi\f(\beta_{\hat{L}_n}(M)\g)+k_n\beta_{\hat{L}_n}(M)\\&=\psi_n\f(\beta_{\hat{L}_n}(M)\g)
\end{split}
\end{equation}
for $n\geqslant N$. In the above estimation we utilized the fact that $\z{\Omega_n}$ is $\Lambda$-invariant. Theorem \cite[Th.3.12.]{kunze} was also applied. \par Observe that \[\sup_{x\in\Omega_n}\overline{\lim_{z\to x}}\sup_{u\in M}|g(x,u(z))-g(z,u(z))|=0,\] by $(\gie_1)$. Taking into account that $\limsup\limits_{z\to x}\varphi(\psi(z))\<\varphi\f(\limsup\limits_{z\to x}\psi(z)\g)$ for any $\psi\colon\R{}_+\to\R{}_+$, one may estimate
\begin{align*}
\sup_{x\in\Omega_n}\overline{\lim_{z\to x}}\sup_{u\in M}&\,|N_g(u)(x)-N_g(u)(z)|\\&\<\sup_{x\in\Omega_n}\overline{\lim_{z\to x}}\sup_{u\in M}\f(|g(x,u(x))-g(x,u(z))|+|g(x,u(z))-g(z,u(z))|\g)\\&\<\sup_{x\in\Omega_n}\overline{\lim_{z\to x}}\sup_{u\in M}\varphi(|u(x)-u(z)|)+\sup_{x\in\Omega_n}\overline{\lim_{z\to x}}\sup_{u\in M}|g(x,u(z))-g(z,u(z))|\\&\<\sup_{x\in\Omega_n}\varphi\f(\overline{\lim_{z\to x}}\sup_{u\in M}|u(x)-u(z)|\g)\<\varphi(e_n(M))
\end{align*}
At the same time
\begin{align*}
\sup_{x\in\Omega_n}\overline{\lim_{z\to x}}\sup_{w\in N_F(M)}|V(w)(x)-V(w)(z)|&\<\f(1+||M||_n^+\g)\sup_{x\in\Omega_n}\overline{\lim_{z\to x}}||K(x)-K(z)||_{L^\infty_n}||b||_{L^1\big(\overline{\Omega}_n\big)}\\&+\f(1+||M||_n^+\g)\sup_{x\in\Omega_n}||K(x)||_{L^\infty_n}\overline{\lim_{z\to x}}\!\int\limits_{\Lambda(x)\triangle\Lambda(z)}\!\!\!b(y)\,dy\\&=0,
\end{align*}
by \eqref{V}. Therefore
\begin{equation}\label{szacowanie2}
\begin{split}
e_n(H(M))&=\sup_{x\in\Omega_n}\overline{\lim_{z\to x}}\sup_{v\in H(M)}|v(x)-v(z)|\\&\<\sup_{x\in\Omega_n}\overline{\lim_{z\to x}}\sup_{u\in M}|N_g(u)(x)-N_g(u)(z)|+\sup_{x\in\Omega_n}\overline{\lim_{z\to x}}\sup_{w\in N_F(M)}|V(w)(x)-V(w)(z)|\\&\<\varphi(e_n(M)).
\end{split}
\end{equation}
for $n\geqslant N$. Taking into consideration \eqref{szacowanie} and \eqref{szacowanie2} we may sum up
\begin{align*}
\frac{1}{2}\nu^N_{\hat{L}}(H(M))_n&=\frac{1}{2}\beta_{\hat{L}_n}(H(M))+\frac{1}{2}e_n(H(M))\<\frac{1}{2}\psi_n\f(\beta_{\hat{L}_n}(M)\g)+\frac{1}{2}\varphi(e_n(M))\\&\<\frac{1}{2}\psi_n\f(\beta_{\hat{L}_n}(M)\g)+\frac{1}{2}\psi_n(e_n(M))\<\psi_n\f(\frac{1}{2}\beta_{\hat{L}_n}(M)+\frac{1}{2}e_n(M)\g).
\end{align*}
for $n\geqslant N$. Denoting $\w{\nu}^N_{\hat{L}}:=\frac{1}{2}\nu^N_{\hat{L}}$ one may rewrite the latter inequality in the following form \[\w{\nu}^N_{\hat{L}}(H(M))_n\<\psi_n\f(\w{\nu}^N_{\hat{L}}(M)_n\g).\] \par Since $\overline{M}$ is noncompact, there must be an index $n_0\geqslant N$ such that $\w{\nu}^N_{\hat{L}}(M)_{n_0}>0$. So, we are dealing with the alternative: $\beta_{\hat{L}_{n_0}}(M)>0$ or $e_{n_0}(M)>0$.In both cases, it follows from \eqref{szacowanie} and \eqref{szacowanie2} respectively that $\w{\nu}^N_{\hat{L}}(H(M))_{n_0}<\w{\nu}^N_{\hat{L}}(M)_{n_0}$. Thus, $f\big(\w{\nu}^N_{\hat{L}}(H(M)),\w{\nu}^N_{\hat{L}}(M)\big)\neq 0$. The latter means that $f\big(\w{\nu}^N_{\hat{L}}(H(M)),\w{\nu}^N_{\hat{L}}(M)\big)\in\R{\mathbb{N}\setminus\{1,\ldots,N-1\}}_+\setminus\{0\}$, i.e. assumption \eqref{cond} of Theorem \ref{fixed} is met. In connection with that, $\fix(H)$ must be nonempty. Consequently, the integral inclusion \eqref{inclusion2} possesses a continuous solution.
\end{proof}

\begin{corollary}
Let $(\mathbb{E})$ be satisfied. Assume ${\bm\Omega}(\Lambda)\neq\varnothing$ and there exists a continuous $\tau\in{\bm\tau}(\Lambda)$. Suppose there exists a nondecreasing positively homogeneous usc at zero function $\theta\colon\R{}_+\to\R{}_+$ with $\theta(0)=0$ and a concave function $\varphi\in\phi$ satisfying \eqref{10}, for which
\begin{equation}\label{gie}
|g(x,u)-g(y,w)|\<\theta(|x-y|)+\varphi(|u-w|)\;\text{ for all }(x,u), (y,w)\in\Omega\times E.
\end{equation}
Assume further that hypotheses $(\ka_1)$-$(\ka_2)$ and $(\F_1)$-$(\F_5)$ hold. Then the Volterra integral inclusion \eqref{inclusion2} has at leat one continuous solution.
\end{corollary}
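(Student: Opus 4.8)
The plan is to derive the corollary directly from Theorem \ref{ex1}: I will show that the joint estimate \eqref{gie} already contains hypotheses $(\gie_1)$ and $(\gie_2)$ and, more importantly, that it permits an explicit choice of $\z{a_n}\in\R{\mathbb{N}}_+$ turning the boundary condition \eqref{brzeg} into an identity; all the remaining assumptions of Theorem \ref{ex1} (namely $(\ka_1)$--$(\ka_2)$, $(\F_1)$--$(\F_5)$, ${\bm\Omega}(\Lambda)\neq\varnothing$ and ${\bm\tau}(\Lambda)\neq\varnothing$, the last being furnished by the continuous $\tau$) are postulated verbatim. The hypothesis $(\gie_2)$ comes for free by letting $y=x$ in \eqref{gie}: since $\theta(0)=0$, one obtains $|g(x,u)-g(x,w)|\<\varphi(|u-w|)$ with the prescribed concave $\varphi\in\phi$ satisfying \eqref{10}.

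For $(\gie_1)$ I would first record that $\varphi(0)=0$. Indeed, membership $\varphi\in\phi$ together with \eqref{concave} forces $\varphi(x)<x$ for $x>0$ (otherwise monotonicity of $\varphi$ would keep $\varphi^n(x)$ bounded away from $0$), whence $\varphi(0)\<\varphi(x)<x$ for all $x>0$ and so $\varphi(0)=0$. Being usc at the origin, nonnegative and null there, both $\theta$ and $\varphi$ satisfy $\theta(\delta),\varphi(\delta)\to0$ as $\delta\to0^+$; thus $\omega(\delta):=\theta(\delta)+\varphi(\delta)$ is a modulus of continuity, and \eqref{gie} makes $g$ uniformly continuous on all of $\Omega\times E$, in particular on its bounded subsets.

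The crux is \eqref{brzeg}. I would exploit the two structural assumptions quantitatively. First, positive homogeneity of $\theta$ gives $\theta(t)=\theta(t\cdot1)=t\,\theta(1)$, i.e. $\theta(t)=Lt$ with $L:=\theta(1)\geqslant0$; fixing any $x_0\in\Omega$ and writing $c:=|g(x_0,0)|$, the spatial part of \eqref{gie} yields $|g(x,0)|\<c+L|x-x_0|$, hence
\[||g(\cdot,0)||_n\<c+LR_n,\qquad R_n:=\sup_{x\in\cl_\Omega(\Omega_n)}|x-x_0|<\infty,\]
finiteness of $R_n$ following from boundedness of the exhausting sets $\Omega_n$. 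Secondly, concavity of $\varphi$ with $\varphi(0)=0$ makes $x\mapsto\varphi(x)/x$ nonincreasing on $(0,\infty)$, so its supremum is the limit at $0^+$, which by \eqref{10} equals some $q<1$; this upgrades the local slope bound to the global one $\varphi(x)\<qx$ for all $x\geqslant0$. It now suffices to put $a_n:=(c+LR_n+1)/(1-q)$, for then
\[a_n-\varphi(a_n)-||g(\cdot,0)||_n\geqslant(1-q)a_n-(c+LR_n)=1>0\]
for every $n$, so that $\liminf_{n\to\infty}(a_n-\varphi(a_n)-||g(\cdot,0)||_n)\geqslant1>0$, which is exactly \eqref{brzeg}.

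Having verified $(\gie_1)$, $(\gie_2)$ and \eqref{brzeg}, I would simply invoke Theorem \ref{ex1} to obtain a continuous solution of \eqref{inclusion2}. \textbf{The main obstacle} is precisely the middle step: passing from the purely local condition \eqref{10} on the behaviour of $\varphi$ near $0$ to the global linear majorization $\varphi(x)\<qx$ via concavity, and matching it against the linear growth $||g(\cdot,0)||_n\<c+LR_n$ extracted from positive homogeneity of $\theta$. It is this balance of two linear bounds that makes \eqref{brzeg} satisfiable by an explicit $\z{a_n}$ no matter how rapidly the exhaustion $\z{\Omega_n}$ expands toward the boundary of $\Omega$.
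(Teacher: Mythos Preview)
Your argument is correct, and it proceeds by a genuinely different route from the paper's own proof. The paper does \emph{not} verify \eqref{brzeg} and then invoke Theorem~\ref{ex1} as a black box; instead it reopens the proof of Theorem~\ref{ex1}, replaces the ball ${\mathcal X}$ in \eqref{X} by one of fixed radius $r$ independent of $n$, and shows invariance directly. To absorb the term $\sup_{x\in\Omega_n}e^{-L_n\tau(x)}\theta(|x|)$ the paper uses continuity of $\tau$ to secure $\inf\tau(\Omega_n)>0$, whence $e^{-L\inf\tau(\Omega_n)}\|\Omega_n\|^+\to0$ as $L\to+\infty$, and then pushes this through $\theta$ via upper semicontinuity at zero. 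Your approach is more elementary: you exploit positive homogeneity to identify $\theta(t)=Lt$, derive the linear bound $\|g(\cdot,0)\|_n\<c+LR_n$, and match it against the global estimate $\varphi(x)\<qx$ obtained from concavity together with \eqref{10}; this makes \eqref{brzeg} hold for an explicit $\z{a_n}$ and lets Theorem~\ref{ex1} apply verbatim. A pleasant by-product is that your argument never uses continuity of $\tau$ --- only ${\bm\tau}(\Lambda)\neq\varnothing$ --- so it actually yields a slightly stronger statement than the corollary as written. The paper's approach, by contrast, shows how the Bielecki renorming can compensate for the growth of $\|g(\cdot,0)\|_n$ without first linearising $\theta$, at the price of the extra regularity hypothesis on $\tau$.
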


\begin{proof}
Notice that \eqref{gie} entails $(\gie_1)$--$(\gie_2)$. Fix any $r>0$. Clearly, $\inf\tau(\Omega_n)>0$ for each $n\in\mathbb{N}$, by continuity of $\tau$. Since $\theta$ is usc at zero and \[\sup_{x\in\Omega_n}e^{-L\tau(x)}|x|\<e^{-L\inf\tau(\Omega_n)}||\Omega_n||^+\xrightarrow[L\to+\infty]{}0,\] we may choose in accordance with the latter and Lemma \ref{lemcon} a sequence $L\in\R{\mathbb{N}}_+$ for which \[r-\varphi(r)-\theta\big(||\Omega_n||^+_{L_n}\big)-\sup_{x\in\Omega_n}||K(x)||_{L^\infty_n}\Phi(L_n,b)_n(1+r)\geqslant 0.\] Modify definition \eqref{X} in the following way \[{\mathcal X}:=\bigcap_{n=1}^\infty\f\{u\in C(\Omega,E)\colon ||u||_{L_n}\<r\g\}.\] In connection with the above, inequality \eqref{invariant} will gain the form
\begin{equation}
\begin{split}
||v||_{L_n}&\<\varphi(||u||_{L_n})+\sup_{x\in\Omega_n}e^{-L_n\tau(x)}\theta(|x|)+\sup_{x\in\Omega_n}||K(x)||_{L^\infty_n}\Phi(L_n,b)_n\f(1+||u||_{L_n}\g)\\&\<\varphi(r)+\theta(||\Omega_n||^+_{L_n})+\sup_{x\in\Omega_n}||K(x)||_{L^\infty_n}\Phi(L_n,b)_n(1+r)\<r.
\end{split}
\end{equation}
Consequently, the set ${\mathcal X}$ is invariant under the operator $H$. In the context of proof of Theorem \ref{ex1} it is clear that the integral inclusion \eqref{inclusion2} possesses a continuous solution.
\end{proof}

\begin{corollary}
Assume ${\bm\Omega}(\Lambda)\neq\varnothing$ and ${\bm\tau}(\Lambda)\neq\varnothing$. Let $(\mathbb{E})$ be satisfied. Suppose that hypotheses $(\ka_1)$-$(\ka_2)$  and $(\F_1)$-$(\F_5)$ hold. If assumptions $(\gie_1)$-$(\gie_2)$ are satisfied with the proviso that $\varphi\in\phi$ is given by $\varphi(x):=kx$ for some $k\in(0,1)$ and $R:=\sup\limits_{x\in\Omega}|g(x,0)|<\infty$, then the solution set of the Volterra integral inclusion \eqref{inclusion2} is nonempty and compact in the compact-open topology of $C(\Omega,E)$.
\end{corollary}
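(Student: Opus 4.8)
The plan is to run the fixed point argument of Theorem~\ref{ex1} on a single closed convex bounded set $\mathcal X$ of constant radius, and then to supplement it with an a priori bound showing that \emph{every} solution already lies in $\mathcal X$; compactness of the solution set will then be harvested directly from Theorem~\ref{fixed}, which delivers not merely nonemptiness but also compactness of the fixed point set. Concretely, I would fix $a>R/(1-k)$, and, invoking Lemma~\ref{lemcon}, choose for each $n$ a weight $L_n$ so large that \[c_n:=\sup_{x\in\Omega_n}||K(x)||_{L^\infty(\cl_\Omega(\Omega_n),\mathcal L(E))}\,\Phi(L_n,b)_n\] obeys $c_n(1+a)\<(1-k)a-R$ (the right-hand side being positive by the choice of $a$). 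Let $\mathcal X$ be the set \eqref{X} taken with the constant radius $a$.

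First I would check that $H:=N_g+V\circ N_F$ maps $\mathcal X$ into itself and meets the hypotheses of Theorem~\ref{fixed}. The estimate \eqref{invariant}, specialized to $\varphi(x)=kx$ and combined with $||g(\cdot,0)||_n\<R$, gives $||v||_{L_n}\<ka+R+c_n(1+a)\<a$ for $v\in H(\mathcal X)$, so $\mathcal X$ is invariant. By the proof of Theorem~\ref{ex1}, $H$ is admissible (indeed acyclic, being upper semicontinuous with compact convex values), and with the comparison functions $\psi_n(x)=kx+k_nx$ for suitable $k_n\in(0,1-k)$, together with the measure of nonequicontinuity $\nu^N_{\hat L}$, the operator $H$ is $\nu^N_{\hat L}$-condensing in the sense of \eqref{cond}. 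Theorem~\ref{fixed} therefore guarantees that $\fix(H|_{\mathcal X})$ is nonempty and compact in the compact-open topology.

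Next I would establish the a priori bound, which is the genuinely new ingredient. If $u$ is any solution, then $u\in H(u)$, i.e. $u=N_g(u)+V(w)$ for some $w\in N_F(u)$. Substituting $|g(x,u(x))|\<k|u(x)|+R$, the growth bound $(\F_4)$, and the $\Lambda$-invariance of $\z{\Omega_n}$ (which keeps $\Lambda(x)\subset\Omega_n$ for $x\in\Omega_n$) into the pointwise estimate underlying \eqref{invariant}, one arrives, for each $n$, at the self-referential inequality \[||u||_{L_n}\<k||u||_{L_n}+R+c_n\f(1+||u||_{L_n}\g).\] Since each ring $\w\Omega_n$, hence $\cl_\Omega(\Omega_n)$, is compact, $||u||_{L_n}<\infty$, so this linear inequality may be solved to give $||u||_{L_n}\<\frac{R+c_n}{1-k-c_n}\<a$, the last step being exactly our choice $c_n(1+a)\<(1-k)a-R$. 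Thus $\fix(H)\subset\mathcal X$, whence $\fix(H)=\fix(H|_{\mathcal X})$, and the solution set of \eqref{inclusion2} coincides with the nonempty compact set furnished above.

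I expect the principal obstacle to be the compatibility of the a priori bound with the invariance property: both must hold for a single weight sequence $\z{L_n}$ so that the bound captures all solutions uniformly. It is precisely the linearity $\varphi(x)=kx$ that makes the self-referential estimate solvable — a merely concave $\varphi$ would leave $||u||_{L_n}$ on both sides without yielding a closed-form bound, which is why Theorem~\ref{ex1} can only assert nonemptiness. A secondary point, needed to ensure the linear inequality is not vacuous, is the finiteness $||u||_{L_n}<\infty$, which rests on the compactness of the partitioning rings $\w\Omega_n$.
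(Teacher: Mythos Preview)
Your argument is correct and follows essentially the same route as the paper: choose weights $L_n$ via Lemma~\ref{lemcon} so that both the invariance of $\mathcal X$ under $H$ and the a priori estimate $||\hat u||_{L_n}\<(R+c_n)/(1-k-c_n)$ hold simultaneously, then invoke Theorem~\ref{fixed} for compactness of $\fix(H)$ and conclude $\mathcal S=\fix(H)$. The only cosmetic difference is that the paper takes the varying radii $a_n:=n$ (so that \eqref{fi2} reads $c_n(1+n)\<(1-k)n-R$, valid for $n\geqslant N$ large), whereas you work with a single constant radius $a>R/(1-k)$; either choice yields the same self-referential linear inequality for $||\hat u||_{L_n}$ and the same resolution.
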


\begin{proof}
Take $\z{\Omega_n}\in{\bm\Omega}(\Lambda)$ and $\tau\in{\bm\tau}(\Lambda)$. Put $a_n:=n$. Clearly, \[\liminf_{n\to\infty}(a_n-\varphi(a_n)-||g(\cdot,0)||_n)=\lim_{n\to\infty}(1-k)n-R=+\infty>0.\] From Lemma \ref{lemcon} follows the existence of $L\in\R{\mathbb{N}\setminus\{1,\ldots,N-1\}}_+$, for which the following inequality is satisfied:
\begin{equation}\label{fi2}
\Phi(L_n,b)_n\<\frac{n-kn-R}{\sup\limits_{x\in\Omega_n}||K(x)||_{L^\infty_n}(1+n)}.
\end{equation}
Consider ${\mathcal X}$ given by \eqref{X}. Denote by ${\mathcal S}$ the solution set of the problem \eqref{inclusion2}. We show that $H({\mathcal X})\subset{\mathcal X}$ and at the same time ${\mathcal S}\subset{\mathcal X}$. To this aim fix $v=N_g(u)+V(w)\in N_g(u)+V(N_F(u))\subset H({\mathcal X})$ and $\hat{u}\in{\mathcal S}$. Since $|g(x,u)|\<k|u|+|g(x,0)|$, we arrive at
\begin{align*}
||v||_{L_n}&\<k||u||_{L_n}+||g(\cdot,0)||_n+\sup_{x\in\Omega_n}||K(x)||_{L^\infty_n}\Phi(L_n,b)_n\f(1+||u||_{L_n}\g)\\&\<kn+R+\sup_{x\in\Omega_n}||K(x)||_{L^\infty_n}\Phi(L_n,b)_n(1+n)\<n,
\end{align*}
which means that ${\mathcal X}$ is $H$-invariant. On the other hand, from
\begin{align*}
||\hat{u}||_{L_n}&\<k||\hat{u}||_{L_n}+R+\sup_{x\in\Omega_n}||K(x)||_{L^\infty_n}\Phi(L_n,b)_n\f(1+||\hat{u}||_{L_n}\g),
\end{align*}
it follows \[||\hat{u}||_{L_n}\<\frac{R+\sup\limits_{x\in\Omega_n}||K(x)||_{L^\infty_n}\Phi(L_n,b)_n}{1-k-\sup\limits_{x\in\Omega_n}||K(x)||_{L^\infty_n}\Phi(L_n,b)_n}\<n,\] by \eqref{fi2} and inclusion ${\mathcal S}\subset{\mathcal X}$ follows.\par The rest of the proof proceeds analogously to the proof of Theorem \ref{ex1}. In particular, the fixed point set $\fix(H)$ is compact in the compact-open topology of the space $C(\Omega,E)$, in view of Theorem \ref{fixed}. Since ${\mathcal S}=\fix(H)$, the solution set of \eqref{inclusion2} must be also compact.
\end{proof}
The successive existence theorem applies to the following generalization of the integral inclusion \eqref{inclusion2}:
\begin{equation}\label{volterra}
u(x)\in g\f(x,u(x),\int_{\Lambda(x)}k(x,y)F(y,u(y))\,dy\g),\;\;x\in\Omega,
\end{equation}
where $g\colon\Omega\times E\times E\to E$ satisfies
\begin{itemize}
\item[$(\gie_1')$] $g$ is uniformly continuous on bounded subsets of $\Omega\times E\times E$,
\item[$(\gie_2')$] there exists a nondecreasing positively homogeneous usc at zero map $\vartheta\colon\R{}_+\to\R{}_+$ such that $\vartheta(x)\<x$ for $x\in\R{}_+$ and a concave function $\varphi\in\phi$ satisfying \eqref{10} for which \[|g(x,u_1,u_2)-g(x,w_1,w_2)|\<\varphi(|u_1-w_1|)+\vartheta(|u_2-w_2|)\] on $\Omega\times E\times E$.
\end{itemize}

\begin{theorem}\label{g}
Assume ${\bm\Omega}(\Lambda)\neq\varnothing$ and ${\bm\tau}(\Lambda)\neq\varnothing$. Let $(\mathbb{E})$ be satisfied. Assume that conditions $(\ka_1)$-$(\ka_2)$, $(\gie_1')$--$(\gie_2')$ and $(\F_1)$--$(\F_5)$ hold. If the following inequality is satisfied 
\begin{equation}\label{brzeg2}
\liminf_{n\to\infty}\,\f(a_n-\varphi(a_n)-||g(\cdot,0,0)||_n\g)>0
\end{equation}
for some $\z{a_n}\in\R{\mathbb{N}}_+$, then the solution set of Volterra integral inclusion \eqref{volterra} is nonempty.
\end{theorem}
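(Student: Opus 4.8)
The plan is to follow the scheme of Theorem \ref{ex1}, writing \eqref{volterra} as the fixed point problem for the admissible operator $H\colon C(\Omega,E)\map C(\Omega,E)$, $H:=N_g\circ(I\times(V\circ N_F))$, where $N_g\colon C(\Omega,E)\times C(\Omega,E)\to C(\Omega,E)$ is the single-valued Nemytski\v{\i} operator of $g$. One checks at once that $u\in\fix(H)$ precisely when $u$ solves \eqref{volterra}, so that $\fix(H)=\mathcal S$. First I would fix $\z{\Omega_n}\in{\bm\Omega}(\Lambda)$ and $\tau\in{\bm\tau}(\Lambda)$ and, using Lemma \ref{lemcon} and \eqref{brzeg2}, choose $N\in\mathbb{N}$ and a nondecreasing $L\in\R{\mathbb{N}\setminus\{1,\ldots,N-1\}}_+$ so that the analogue of \eqref{fi} holds with $\|g(\cdot,0,0)\|_n$ in place of $\|g(\cdot,0)\|_n$. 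With the weighted seminorms $\|u\|_{L_n}:=\sup_{x\in\Omega_n}e^{-L_n\tau(x)}|u(x)|$ I would then form the set ${\mathcal X}$ exactly as in \eqref{X}.

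Invariance of ${\mathcal X}$ under $H$ is where the new third argument of $g$ enters, and here the decisive simplification is the subidentity bound $\vartheta(t)\<t$. For $v=N_g(u,V(w))$ with $w\in N_F(u)$ and $u\in{\mathcal X}$, hypothesis $(\gie_2')$ gives $|v(x)|\<|g(x,0,0)|+\varphi(|u(x)|)+\vartheta(|V(w)(x)|)$, and since $\vartheta(|V(w)(x)|)\<|V(w)(x)|\<\int_{\Lambda(x)}\|k(x,y)\|_{{\mathcal L}(E)}b(y)(1+|u(y)|)\,dy$ by $(\F_4)$, the computation \eqref{invariant} carries over verbatim --- using the $\Lambda$-invariance of $\z{\Omega_n}$ and the definition of $\Phi$ --- to yield $\|v\|_{L_n}\<a_n$. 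Hence $H({\mathcal X})\subset{\mathcal X}$.

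The principal obstacle, and the genuine departure from Theorem \ref{ex1}, is that $H(u)$ is in general not convex, because $g$ acts after the integration; I would therefore prove admissibility of $H$ through its composition structure rather than by exhibiting acyclicity directly. The inner map $V\circ N_F\colon C(\Omega,E)\map C(\Omega,E)$ has convex values ($N_F$ has convex values by $(\F_1)$ and $V$ is linear), and those values are relatively compact in $C(\Omega,E)$: the estimate $\beta(V(N_F(u))(x))\<4\int_{\Lambda(x)}\|k(x,y)\|_{{\mathcal L}(E)}\beta(F(y,u(y)))\,dy$ vanishes because $(\F_5)$ applied to the singleton $\{u(y)\}$ forces $\beta(F(y,u(y)))=0$, while \eqref{V} furnishes the requisite equicontinuity, so Arzel\`a--Ascoli applies. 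Upper semicontinuity of $V\circ N_F$ follows as in Theorem \ref{ex1} from Lemma \ref{nem}, the weak--weak continuity of $V$, and the compactness just obtained. Thus $V\circ N_F$ is acyclic, whence $I\times(V\circ N_F)$ is acyclic too, and composing with the continuous single-valued $N_g$ (continuity being guaranteed by $(\gie_1')$) makes $H$ admissible by \cite[Th.40.6]{gorn}.

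Finally I would verify the condensing condition \eqref{cond} relative to $\w{\nu}^N_{\hat{L}}:=\frac{1}{2}\nu^N_{\hat{L}}$ and the map $f\in\Phi$ given by $f((x_n)_{n=N}^\infty,(y_n)_{n=N}^\infty):=(\psi_n(y_n)-x_n)_{n=N}^\infty$ with $\psi_n(x):=\varphi(x)+k_nx$, choosing $\hat{L}$ via Lemma \ref{lemcon} so that $4\sup_{x\in\Omega_n}\|K(x)\|_{L^\infty_n}\Phi(\hat{L}_n,\eta)_n<k_n$. For the ball-measure component, $(\gie_2')$ gives $\beta(H(M)(x))\<\varphi(\beta(M(x)))+\vartheta(\beta(V(N_F(M))(x)))$, and using $\vartheta(t)\<t$ the argument of \eqref{szacowanie} produces $\beta_{\hat{L}_n}(H(M))\<\psi_n(\beta_{\hat{L}_n}(M))$; for the nonequicontinuity component, $\vartheta(t)\<t$ collapses the $\vartheta$-contribution to \eqref{V}, leaving $e_n(H(M))\<\varphi(e_n(M))\<\psi_n(e_n(M))$. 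Concavity of $\psi_n$ then gives $\w{\nu}^N_{\hat{L}}(H(M))_n\<\psi_n(\w{\nu}^N_{\hat{L}}(M)_n)$, and since $\psi_n(t)<t$ for $t>0$, Theorem \ref{fixed} yields $\fix(H)\neq\varnothing$; as $\fix(H)=\mathcal S$, the solution set of \eqref{volterra} is nonempty.
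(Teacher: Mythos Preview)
Your proposal is correct and follows essentially the same route as the paper: define $H=N_g\circ(I\times(V\circ N_F))$, establish admissibility via the acyclicity of $I\times(V\circ N_F)$ composed with the continuous $N_g$, build the invariant set ${\mathcal X}$ through Lemma \ref{lemcon} and \eqref{brzeg2}, and verify the condensing estimate with respect to $\w{\nu}^N_{\hat{L}}$ and the same $\psi_n$. The only noteworthy difference is that you lean exclusively on the subidentity bound $\vartheta(t)\<t$ at every step (invariance, $\beta_{\hat{L}_n}$-estimate, $e_n$-estimate), whereas the paper sometimes invokes the upper semicontinuity of $\vartheta$ at zero (as in \eqref{fiftaszek}) or its positive homogeneity; your simplification is legitimate since $\vartheta(t)\<t$ alone suffices everywhere.
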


\begin{remark}
Each concave function $\vartheta\in\phi$ meets demands of the proof of Theorem \ref{g}.
\end{remark}

\begin{proof}
Fix $\z{\Omega_n}\in{\bm\Omega}(\Lambda)$ and $\tau\in{\bm\tau}(\Lambda)$. Define multimaps ${\mathcal F},H\colon C(\Omega,E)\map C(\Omega,E)$ in the following way ${\mathcal F}:=V\circ N_F$ and $H:=N_g\circ(I\times{\mathcal F})$. As shown previously the operator $I\times{\mathcal F}\colon C(\Omega,E)\map C(\Omega,E)\times C(\Omega,E)$ is usc with compact convex values. Since $N_g\colon C(\Omega,E)\times C(\Omega,E)\to C(\Omega,E)$ is continuous, the multimap $H$ is admissible.
\par Taking into account assumption \eqref{brzeg2} and upper semicontinuity of $\vartheta$ at zero, we may choose $(L_n)_{n=N}^\infty\subset\R{}_+$ such that \[\vartheta\f(\sup_{x\in\Omega_n}||K(x)||_{L^\infty_n}\Phi(L_n,b)_n(1+a_n)\g)\<a_n-\varphi(a_n)-||g(\cdot,0,0)||_n.\] For $v\in H(u)\subset H({\mathcal X})$ and $n\geqslant N$ one has
\begin{align*}
||v||_{L_n}&\<\varphi(||u||_{L_n})+||g(\cdot,0,0)||_n+\vartheta\f(\sup_{x\in\Omega_n}||K(x)||_{L^\infty_n}\Phi(L_n,b)_n\f(1+||u||_{L_n}\g)\g)\\&\<\varphi(a_n)+||g(\cdot,0,0)||_n+\vartheta\f(\sup_{x\in\Omega_n}||K(x)||_{L^\infty_n}\Phi(L_n,b)_n(1+a_n)\g)\<a_n
\end{align*}
Therefore, $H({\mathcal X})\subset{\mathcal X}$.\par Let $\hat{L}\in\R{\mathbb{N}\setminus\{1,\ldots,N-1\}}_+$ be such that \[4\sup\limits_{x\in\Omega_n}||K(x)||_{L^\infty_n}\Phi(\hat{L}_n,\eta)_n\<k_n,\] where $k_n\in(0,1)$ is the constant introduced in \eqref{r_n}. Then
\begin{equation}\label{wacek}
\begin{split}
\varphi(x)+\vartheta\f(4\sup\limits_{x\in\Omega_n}||K(x)||_{L^\infty_n}\Phi(\hat{L}_n,\eta)_nx\g)&\<\varphi(x)+4\sup\limits_{x\in\Omega_n}||K(x)||_{L^\infty_n}\Phi(\hat{L}_n,\eta)_nx\\&\<\varphi(x)+k_nx
\end{split}
\end{equation}
for every $n\geqslant N$ and each $x\in(0,r_n]$. Suppose that $M\subset{\mathcal X}$ is not relatively compact. Observe that
\begin{align*}
\beta(H(M)(x))&=\beta(\{g(x,u(x),V(w)(x))\colon u\in M, w\in N_F(u)\})\<\beta(g(\{x\}\times M(x)\times{\mathcal F}(M)(x)))\\&\<\varphi(\beta(M(x)))+\psi(\beta({\mathcal F}(M)(x))).
\end{align*}
for every $x\in\Omega$. Therefore, taking into account \eqref{szacowanie} and \eqref{wacek}, we arrive at \[\beta_{\hat{L}_n}(H(M))\<\varphi\f(\beta_{\hat{L}_n}(M)\g)+\vartheta\f(4\sup_{x\in\Omega_n}||K(x)||_{L^\infty_n}\Phi(\hat{L}_n,\eta)_n\,\beta_{\hat{L}_n}(M)\g)\<\psi_n\f(\beta_{\hat{L}_n}(M)\g)\] for $n\geqslant N$. Since $g$ is in particular uniformly continuous on the set \[\Omega_{n+1}\times D(0,r_{n+1})\times D\f(0,\sup\limits_{x\in\Omega_{n+1}}||K(x)||_{L^\infty_{n+1}}||b||_{L^1\big(\cl_\Omega(\Omega_{n+1})\big)}\f(1+r_{n+1}\g)\g),\] we see that
\begin{equation}\label{uniform}
\sup_{x\in\Omega_n}\overline{\lim_{z\to x}}\sup_{\stackrel{\scriptstyle u\in M}{w\in N_F(u)}}|g(x,u(z),V(w)(z))-g(z,u(z),V(w)(z))|=0.
\end{equation}
It follows from \eqref{V} that $\overline{\lim\limits_{z\to x}}\sup\limits_{w\in N_F(M)}|V(w)(x)-V(w)(z)|=0$ for every $x\in\Omega_n$. Whence, for all $x\in\Omega_n$ 
\begin{equation}\label{fiftaszek}
\overline{\lim_{z\to x}}\,\vartheta\f(\sup_{w\in N_F(M)}|V(w)(x)-V(w)(z)|\g)\<\overline{\lim_{z\to 0^+}}\vartheta(z)\<0,
\end{equation}
because $\vartheta$ is usc at zero. In accordance with by \eqref{uniform} and \eqref{fiftaszek}, one may estimate
\begin{align*}
e_n(H(M))&=\sup_{x\in\Omega_n}\overline{\lim_{z\to x}}\sup_{\stackrel{\scriptstyle u\in M}{w\in N_F(u)}}|g(x,u(x),V(w)(x))-g(z,u(z),V(w)(z))|\\&\<\sup_{x\in\Omega_n}\overline{\lim_{z\to x}}\sup_{\stackrel{\scriptstyle u\in M}{w\in N_F(u)}}|g(x,u(x),V(w)(x))-g(x,u(z),V(w)(z))|\\&\<\sup_{x\in\Omega_n}\overline{\lim_{z\to x}}\sup_{u\in M}\varphi(|u(x)-u(z)|)+\sup_{x\in\Omega_n}\overline{\lim_{z\to x}}\sup_{{w\in N_F(M)}}\vartheta(|V(w)(x)-V(w)(z)|)\\&\<\varphi(e_n(M)).
\end{align*}
It becomes clear, therefore, that the previously obtained estimation remains in force i.e., \[\frac{1}{2}\nu^N_{\hat{L}}(H(M))_n\<\psi_n\f(\frac{1}{2}\beta_{\hat{L}_n}(M)+\frac{1}{2}e_n(M)\g).\] Completely analogous reasoning as in the proof of Theorem \ref{ex1} leads to the conclusion that the multimap $H\colon{\mathcal X}\map{\mathcal X}$ meets the assumptions of Theorem \ref{fixed}. The latter means that the solution set of the integral inclusion \eqref{volterra} is nonempty.
\end{proof}

\begin{corollary}
Assume ${\bm\Omega}(\Lambda)\neq\varnothing$ and ${\bm\tau}(\Lambda)\neq\varnothing$. Let $(\mathbb{E})$ be satisfied. Suppose that hypotheses $(\ka_1)$-$(\ka_2)$  and $(\F_1)$-$(\F_5)$ hold. If assumptions $(\gie_1')$-$(\gie_2')$ are satisfied with the proviso that $\varphi\in\phi$ is given by $\varphi(x):=kx$ for some $k\in(0,1)$ and $R:=\sup\limits_{x\in\Omega}|g(x,0,0)|<\infty$, then the solution set of the Volterra integral inclusion \eqref{volterra} is nonempty and compact in the compact-open topology of $C(\Omega,E)$.
\end{corollary}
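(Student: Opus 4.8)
The plan is to treat this statement as the compact-solution-set counterpart of Theorem \ref{g}, imitating the argument already used for the analogous corollary to Theorem \ref{ex1} concerning \eqref{inclusion2}. First I would fix $\z{\Omega_n}\in{\bm\Omega}(\Lambda)$ and $\tau\in{\bm\tau}(\Lambda)$ and make the exhausting choice $a_n:=n$. Since $\varphi(x)=kx$ with $k\in(0,1)$ and $||g(\cdot,0,0)||_n\<R<\infty$, one immediately checks \[\liminf_{n\to\infty}\f(a_n-\varphi(a_n)-||g(\cdot,0,0)||_n\g)\geqslant\lim_{n\to\infty}\f((1-k)n-R\g)=+\infty>0,\] so that \eqref{brzeg2} holds and Theorem \ref{g} already guarantees that the solution set $\mathcal{S}$ of \eqref{volterra} is nonempty. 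Recall that $\mathcal{S}=\fix(H)$ for the admissible operator $H:=N_g\circ(I\times(V\circ N_F))$.

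The genuinely new ingredient, exactly as in the corresponding corollary to Theorem \ref{ex1}, is an a priori bound forcing \emph{every} solution into the set $\mathcal{X}$ from \eqref{X}. Relying on Lemma \ref{lemcon} I would first select $L\in\R{\mathbb{N}}_+$ realizing the estimate \eqref{fi2}; with this choice the invariance $H(\mathcal{X})\subset\mathcal{X}$ follows verbatim from the computation in the proof of Theorem \ref{g}, the map $\vartheta$ being absorbed through $\vartheta(t)\<t$. To secure $\mathcal{S}\subset\mathcal{X}$ I would fix $\hat{u}\in\mathcal{S}$, so that $\hat{u}(x)\in g\f(x,\hat{u}(x),V(w)(x)\g)$ for some $w\in N_F(\hat{u})$, and exploit $(\gie_2')$ together with $\varphi(x)=kx$ and $\vartheta(t)\<t$ to derive the pointwise bound $|\hat{u}(x)|\<R+k|\hat{u}(x)|+|V(w)(x)|$. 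Passing to the weighted seminorm $||\cdot||_{L_n}$ and invoking $(\F_4)$ just as in the invariance estimate yields \[||\hat{u}||_{L_n}\<R+k||\hat{u}||_{L_n}+\sup_{x\in\Omega_n}||K(x)||_{L^\infty_n}\Phi(L_n,b)_n\f(1+||\hat{u}||_{L_n}\g),\] where $L^\infty_n:=L^\infty\big(\cl_\Omega(\Omega_n),\mathcal{L}(E)\big)$. Solving this inequality for $||\hat{u}||_{L_n}$ and inserting \eqref{fi2} gives $||\hat{u}||_{L_n}\<n$ for every $n$, hence $\hat{u}\in\mathcal{X}$ and therefore $\mathcal{S}\subset\mathcal{X}$.

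With both inclusions in hand, the remainder reproduces the proof of Theorem \ref{ex1}: the estimates \eqref{szacowanie} and \eqref{szacowanie2}, now mediated by \eqref{wacek}, show that $H\colon\mathcal{X}\map\mathcal{X}$ satisfies the condensing condition \eqref{cond} of Theorem \ref{fixed} relative to the measure $\w{\nu}^N_{\hat{L}}$ and the mapping $f$. Theorem \ref{fixed} then delivers compactness of $\fix(H)$, and since $\mathcal{S}=\fix(H)$ by virtue of $\mathcal{S}\subset\mathcal{X}$, the solution set of \eqref{volterra} is simultaneously nonempty and compact in the compact-open topology of $C(\Omega,E)$.

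The main obstacle I anticipate is arranging a single weight $L$ that serves two purposes at once: it must render $\mathcal{X}$ invariant under $H$ and, at the same time, be sharp enough that the denominator $1-k-\sup_{x\in\Omega_n}||K(x)||_{L^\infty_n}\Phi(L_n,b)_n$ in the a priori estimate stays strictly positive, so that solving for $||\hat{u}||_{L_n}$ is legitimate and produces the bound $\<n$. This is precisely where the linearity $\varphi(x)=kx$ and the subordination $\vartheta(t)\<t$ are indispensable, since they keep the effective contraction constant genuinely below $1$ and thereby let the chosen balls $\{||u||_{L_n}\<n\}$ capture the entire solution set rather than just the fixed point furnished by Theorem \ref{g}.
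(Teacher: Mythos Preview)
Your proposal is correct and follows precisely the approach the paper intends. The paper states this corollary without proof, since it is the obvious transposition to \eqref{volterra} of the fully written-out corollary following Theorem \ref{ex1}; you carry out exactly that transposition --- setting $a_n:=n$, invoking \eqref{fi2} via Lemma \ref{lemcon}, absorbing $\vartheta$ through $\vartheta(t)\<t$ in both the invariance and the a~priori estimate, and then appealing to Theorem \ref{fixed} once $\mathcal{S}\subset\mathcal{X}$ is secured.
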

The third problem to which we give a careful consideration is the integral inclusion of the form \eqref{general} with the proviso that $G\colon\Omega\times\R{M}\times\R{M}\map\R{M}$ satisfies
\begin{itemize}
\item[$(\text{G}_1)$] $G$ has compact convex values,
\item[$(\text{G}_2)$] for every $(x_1,u_1,w_1),(x_x,u_2,w_2)\in\Omega\times E\times E$ one has \[h(G(x_1,u_1,w_1),G(x_2,u_2,w_2))\<L\max\{|x_1-x_2|,|u_1-u_2|,|w_1-w_2|\}\] with \[L<\frac{\sqrt{\pi}\,\Gamma\f(\frac{M+1}{2}\g)}{2\Gamma\f(\frac{M}{2}+1\g)}\]
\end{itemize}
and $F\colon\Omega\times\R{M}\map\R{M}$ is the set-valued map such that
\begin{itemize}
\item[$(\F_1^M)$] for every $(x,u)\in \Omega\times\R{M}$ the set $F(x,u)$ is nonempty compact and convex,
\item[$(\F_2^M)$] the map $F(\cdot,u)$ has a measurable selection for every $u\in\R{M}$,
\item[$(\F_3^M)$] the map $F(x,\cdot)$ is upper semicontinuous for a.a. $x\in\Omega$,
\item[$(\F_4^M)$] there exists $b\in L^1_{\text{loc}}(\Omega)$ such that \[||F(x,u)||^+\<b(x)(1+|u|)\;\text{ a.e. on }\Omega, \text{ for all }u\in\R{M}.\]
\end{itemize}

\begin{theorem}
Assume ${\bm\Omega}(\Lambda)\neq\varnothing$ and ${\bm\tau}(\Lambda)\neq\varnothing$. Suppose that hypotheses $(\ka_1)$-$(\ka_2)$, $(\operatorname{G}_1)$--$(\operatorname{G}_2)$ and $(\F_1^M)$--$(\F_4^M)$  hold. Then \eqref{general} has at least one continuous solution.
\end{theorem}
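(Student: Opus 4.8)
The plan is to recognise that the upper bound imposed on the Lipschitz constant $L$ in $(\operatorname{G}_2)$ is nothing but the reciprocal of the Lipschitz constant of the classical Steiner point selection on the family of nonempty compact convex subsets of $\R{M}$, and then to use this selection in order to reduce the multivalued problem \eqref{general} to the single--valued inclusion \eqref{volterra}, which has already been settled in Theorem \ref{g}.

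Recall that the Steiner point $s(C)$ of a nonempty compact convex $C\subset\R{M}$ with support function $h_C$ is given by \[s(C)=\frac{M}{\sigma(S^{M-1})}\int_{S^{M-1}}u\,h_C(u)\,d\sigma(u),\] that $s(C)\in C$ (so that $s$ is a selection), and that $s$ satisfies $|s(C_1)-s(C_2)|\<c_M\,h(C_1,C_2)$ with \[c_M=\frac{M}{\sigma(S^{M-1})}\int_{S^{M-1}}|\langle v,u\rangle|\,d\sigma(u)=\frac{2\,\Gamma\f(\frac{M}{2}+1\g)}{\sqrt{\pi}\,\Gamma\f(\frac{M+1}{2}\g)},\] the spherical integral being independent of the unit vector $v$. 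First I would set $g\colon\Omega\times\R{M}\times\R{M}\to\R{M}$, $g(x,u,w):=s\f(G(x,u,w)\g)$. Since $s$ is a selection, a function $u$ that solves \eqref{volterra} for this $g$ — i.e.\ $u(x)=g(x,u(x),V(w)(x))$ for some $w\in N_F(u)$ — satisfies $u(x)=s\f(G(x,u(x),V(w)(x))\g)\in G(x,u(x),V(w)(x))$, hence is a continuous solution of \eqref{general}. Moreover $\R{M}$ is reflexive, so $(\mathbb{E})$ holds through item (ii).

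Next I would verify that $g$ meets $(\gie_1')$--$(\gie_2')$. Composing the Lipschitz estimate for $s$ with $(\operatorname{G}_2)$ gives \[|g(x_1,u_1,w_1)-g(x_2,u_2,w_2)|\<c_M\,h\f(G(x_1,u_1,w_1),G(x_2,u_2,w_2)\g)\<c_ML\max\{|x_1-x_2|,|u_1-u_2|,|w_1-w_2|\},\] so $g$ is globally Lipschitz and in particular uniformly continuous on bounded subsets, which is $(\gie_1')$. Writing $k:=c_ML$, the hypothesis $L<c_M^{-1}$ forces $k\in(0,1)$, and fixing the first argument yields $|g(x,u_1,u_2)-g(x,w_1,w_2)|\<k|u_1-w_1|+k|u_2-w_2|$; thus $(\gie_2')$ holds with the concave $\varphi(t):=kt\in\phi$ (for which $\limsup_{t\to0^+}\varphi(t)/t=k<1$) and the positively homogeneous $\vartheta(t):=kt\<t$. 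As for $F$, assumptions $(\F_1^M)$--$(\F_4^M)$ yield $(\F_1)$--$(\F_4)$ verbatim, once one notes that in $\R{M}$ measurability coincides with strong measurability, compactness with weak compactness, and upper semicontinuity with upper hemicontinuity; condition $(\F_5)$ is vacuously satisfied with $\eta\equiv0$, since $\beta$ vanishes on every bounded subset of the finite--dimensional space $\R{M}$.

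It remains to arrange the boundary condition \eqref{brzeg2}. From $(\operatorname{G}_2)$ the map $x\mapsto G(x,0,0)$ is Hausdorff--Lipschitz, so $\|G(\cdot,0,0)\|^+$ is bounded on each $\Omega_n$; as $g(x,0,0)=s(G(x,0,0))\in G(x,0,0)$, the quantity $c_n:=\|g(\cdot,0,0)\|_n$ is finite, and choosing $a_n:=(c_n+1)/(1-k)$ gives $a_n-\varphi(a_n)-\|g(\cdot,0,0)\|_n=1$ for every $n$, whence the $\liminf$ equals $1>0$. With all hypotheses of Theorem \ref{g} in force, the solution set of \eqref{volterra} associated with $g$ is nonempty, and by the reduction set up above every such solution solves \eqref{general}. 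The main obstacle I anticipate is not analytic but one of bookkeeping and citation: pinning down the exact value of the Lipschitz constant $c_M$ of the Steiner point and confirming that it is the reciprocal of the bound in $(\operatorname{G}_2)$ (the computation reduces to evaluating $\int_{S^{M-1}}|\langle v,u\rangle|\,d\sigma$), together with checking that $s$ remains a well-defined selection on the possibly lower--dimensional compact convex values of $G$.
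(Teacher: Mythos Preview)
Your proposal is correct and follows essentially the same route as the paper: reduce \eqref{general} to \eqref{volterra} by composing $G$ with the Steiner point selection, identify the Lipschitz constant $c_M=2\Gamma(M/2+1)/\bigl(\sqrt{\pi}\,\Gamma((M+1)/2)\bigr)$ so that $k:=c_ML<1$, and invoke Theorem \ref{g} with $\varphi(t)=\vartheta(t)=kt$. Your verification of \eqref{brzeg2} via $a_n:=(c_n+1)/(1-k)$ is in fact tidier than the paper's, which splits into the cases of bounded and unbounded $\Omega$ and uses $a_n:=k\|\Omega_n\|^+$; you also make explicit a few points the paper leaves implicit, namely that $(\mathbb{E})$ holds because $\R{M}$ is reflexive and that $(\F_5)$ is vacuous with $\eta\equiv0$ in finite dimensions.
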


\begin{proof}
Fix $\z{\Omega_n}\in{\bm\Omega}(\Lambda)$ and $\tau\in{\bm\tau}(\Lambda)$. Let ${\mathcal H}(\R{M})$ denote the space of nonempty convex compact subsets of $\R{M}$, endowed with the Hausdorff-Pompeiu metric. In view of \cite[Proposition 2.19]{linden} the Steiner point map $S\colon{\mathcal H}(\R{M})\to\R{M}$ is a Lipschitz selection with Lipschitz constant $2\pi^{-\frac{1}{2}}\Gamma\f(\frac{M}{2}+1\g)/\Gamma\f(\frac{M+1}{2}\g)$. Define $g\colon\Omega\times\R{M}\times\R{M}\to\R{M}$ by $g:=S\circ G$. Then $g$ is a Lipschitz selection of $G$ with Lipschitz constant \[\w{L}:=\frac{2L\Gamma\f(\frac{M}{2}+1\g)}{\sqrt{\pi}\Gamma\f(\frac{M+1}{2}\g)}<1.\]\par If the domain $\Omega$ is unbounded, then $\z{||\Omega_n||^+}$ converges to infinity. Put $a_n:=k||\Omega_n||^+$ with $k>(1-\w{L})^{-1}$. This definition enables us to estimate
\begin{align*}
\liminf_{n\to\infty}\f(a_n-\varphi(a_n)-||g(\cdot,0,0)||_n\g)&\geqslant\liminf_{n\to\infty}\f(a_n-\w{L}a_n-\w{L}||\Omega_n||^+-||G(0,0,0)||^+\g)\\&=\lim_{n\to\infty}\f((1-\w{L})k-1\g)||\Omega_n||^+-||G(0,0,0)||^+=+\infty.
\end{align*}
Suppose, then, that $\Omega$ is bounded. Since $\sup\limits_\n||\Omega_n||^+<+\infty$, one has
\begin{align*}
\liminf_{n\to\infty}\f(a_n-\varphi(a_n)-||g(\cdot,0,0)||_n\g)&\geqslant\liminf_{n\to\infty}\f((1-\w{L})a_n-\w{L}\sup_\K||\Omega_k||^+\!-||G(0,0,0)||^+\g)\!=\!+\infty,
\end{align*}
for any $\z{a_n}\in\R{\mathbb{N}}_+$ with $\lim\limits_{n\to\infty}a_n=+\infty$. These arguments justify \eqref{brzeg2}.\par It is clear that $g$ satisfies $(\gie_1')$--$(\gie_2')$. Since $(\F_1)$--$(\F_5)$ also hold, the integral inclusion \eqref{volterra} possesses a solution, by Theorem \ref{g}. Obviously, this is also a solution of \eqref{general}.
\end{proof}

The observation that the uniform continuity of the selection $g$ of the map $G\colon\Omega\times E\map E$ is sufficient from the point of view of the solutions' existence is confirmed in the following theorem:
\begin{theorem}
Assume ${\bm\Omega}(\Lambda)\neq\varnothing$ and ${\bm\tau}(\Lambda)\neq\varnothing$. Let $E$ be a uniformly convex Banach space. Suppose that hypotheses $(\ka_1)$-$(\ka_2)$, $(\F_1)$-$(\F_5)$ and 
\begin{itemize}
\item[$(\text{G}'_1)$] $G$ is a multivalued map with nonempty convex compact values,
\item[$(\text{G}'_2)$] there exist upper semicontinuous functions $\theta,\varphi\colon\R{}_+\to\R{}_+$ such that $\theta(0)=0$ and $\varphi(x)\<x$ for $x\in\R{}_+$, for which one has \[h(G(x,u),G(y,w))\<\theta(|x-y|)+\varphi(|u-w|)\] on $\Omega\times E$.
\end{itemize}
hold. Further, assume that 
\begin{equation}\label{popierdolka}
\liminf_{n\to\infty}\f(a_n-\sup_{x\in\Omega_n}\theta(|x|)\g)>||G(0,0)||^+
\end{equation}
for some $\z{a_n}\in\R{\mathbb{N}}_+$. Then the following integral inclusion
\begin{equation}\label{hj}
u(x)\in G\f(x,\int_{\Lambda(x)}k(x,y)F(y,u(y))\,dy\g),\;\;x\in\Omega
\end{equation}
has at least one continuous solution.
\end{theorem}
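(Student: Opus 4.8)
The plan is to realize the solutions of \eqref{hj} as the fixed points of an admissible $\nu^N_{\hat L}$-condensing multioperator and to invoke Theorem \ref{fixed}, reproducing the architecture of the proof of Theorem \ref{ex1}. The decisive point is that the single structural hypothesis $(\gie_2')$ used there will be replaced by a hybrid bookkeeping: a uniformly continuous \emph{selection} of $G$ will carry the admissibility and the equicontinuity estimate, whereas the multimap $G$ itself — through $(\text{G}'_2)$ and the compactness of its values — will carry the ball-measure estimate. Uniform convexity of $E$ enters only to manufacture the selection.

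First I would fix the selection. Since $E$ is uniformly convex it is reflexive and strictly convex, so each nonempty closed convex subset of $E$ is Chebyshev; writing $g(x,w)$ for the element of $G(x,w)$ of minimal norm (the metric projection of $0$ onto $G(x,w)$) defines $g\colon\Omega\times E\to E$ with $g(x,w)\in G(x,w)$. By $(\text{G}'_2)$ the set $G(x,w)$ varies $h$-continuously in $(x,w)$ with modulus $\theta(|x-y|)+\varphi(|w-w'|)$, and in a uniformly convex space the metric projection of a fixed point is uniformly continuous on bounded sets, with modulus controlled by the modulus of convexity of $E$; hence $g$ is uniformly continuous on bounded subsets of $\Omega\times E$. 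Establishing this uniform continuity is exactly where I expect the real work to lie: mere strict convexity would deliver only pointwise continuity of $g$, which is insufficient for the equicontinuity step below, and it is the quantitative uniform convexity that upgrades it to a modulus that is uniform over the perturbing function. I stress that I do \emph{not} assert a Lipschitz or nonexpansive selection — the minimal-norm selection is in general only H\"older-$\frac{1}{2}$ in the set argument — so a verbatim reduction to $(\gie_2')$ is unavailable, and the measure estimate must be organised differently.

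Next I would put $\mathcal F:=V\circ N_F$ and $H(u):=\{\,g(\cdot,\psi(\cdot))\colon\psi\in\mathcal F(u)\,\}=N_g\circ\mathcal F$. As established in the proof of Theorem \ref{g}, $\mathcal F$ is upper semicontinuous with compact convex values, hence admissible, and since the uniform continuity of $g$ makes $N_g\colon\psi\mapsto g(\cdot,\psi(\cdot))$ a continuous single-valued map of $C(\Omega,E)$ into itself, $H$ is admissible by \cite[Th.40.6]{gorn}. For invariance of a set $\mathcal X$ of the shape \eqref{X} I would only use the norm bound $|g(x,w)|=\dist(0,G(x,w))\<||G(x,w)||^++\theta(|x|)+\varphi(|w|)$ together with $\varphi(|w|)\<|w|$; inserting the Volterra estimate through $\Phi(L_n,b)_n$ and using $\sup_{x\in\Omega_n}e^{-L_n\tau(x)}\theta(|x|)\<\sup_{x\in\Omega_n}\theta(|x|)$, Lemma \ref{lemcon} lets me pick $L$ so large that \eqref{popierdolka} forces $||v||_{L_n}\<a_n$, i.e. $H(\mathcal X)\subset\mathcal X$.

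Finally the condensing estimate splits as in \eqref{szacowanie}. For the ball part I would bypass the poor modulus of $g$ via the inclusion $H(M)(x)=g(x,\mathcal F(M)(x))\subset G(x,\mathcal F(M)(x))$: because $G(x,\cdot)$ has compact values and $h(G(x,u),G(x,u'))\<\varphi(|u-u'|)$, a covering argument gives $\beta(G(x,S))\<\varphi(\beta(S))\<\beta(S)$, so monotonicity of $\beta$ and \cite[Th.3.12.]{kunze} yield, after the usual choice of $\hat L$, the bound $\beta_{\hat L_n}(H(M))\<4\sup_{x\in\Omega_n}||K(x)||_{L^\infty_n}\Phi(\hat L_n,\eta)_n\,\beta_{\hat L_n}(M)\<k_n\beta_{\hat L_n}(M)$ with $k_n\in(0,1)$. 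For the equicontinuity part I would use only that $g$ is uniformly continuous: for $v=g(\cdot,V(w)(\cdot))$ one has $|v(x)-v(z)|\<\omega_g\f(|x-z|+|V(w)(x)-V(w)(z)|\g)$, and since \eqref{V} makes $\sup_{w\in N_F(M)}|V(w)(x)-V(w)(z)|\to0$ as $z\to x$, it follows that $e_n(H(M))=0$. Hence $\nu^N_{\hat L}(H(M))_n\<k_n\,\nu^N_{\hat L}(M)_n$, and as $\nu^N_{\hat L}$ is regular, noncompactness of $\overline M$ forces $\nu^N_{\hat L}(M)_{n_0}>0$ for some $n_0$, so that \eqref{cond} of Theorem \ref{fixed} holds with $f((x_n),(y_n)):=(y_n-x_n)\in\Phi$. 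Theorem \ref{fixed} then gives a fixed point $u\in H(u)$, and $u(x)=g(x,V(w)(x))\in G(x,V(w)(x))$ for some $w\in N_F(u)$ is precisely a solution of \eqref{hj}.
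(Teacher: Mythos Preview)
Your approach is essentially the paper's own: the paper also manufactures a single-valued selector $g$ of $G$ that is uniformly continuous on bounded sets (citing \cite[Theorem 1.24]{linden} rather than the nearest-point map, but drawing on the same uniform convexity of $E$), sets $H=N_g\circ\mathcal F$, bounds $\beta_{\hat L_n}(H(M))$ \emph{through $G$} via the inclusion $g(\{x\}\times\mathcal F(M)(x))\subset G(\{x\}\times\mathcal F(M)(x))$ and the compactness of the values of $G$ exactly as you propose, obtains $e_n(H(M))=0$ from the uniform continuity of $g$ together with \eqref{V}, and closes with Theorem \ref{fixed} and the $f$ of Example \ref{ex}. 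The only slip is the displayed bound $|g(x,w)|=\dist(0,G(x,w))\leqslant ||G(x,w)||^{+}+\theta(|x|)+\varphi(|w|)$: as written it is vacuous, and you clearly intend $||G(0,0)||^{+}$ in place of $||G(x,w)||^{+}$, which is what both your invariance step and condition \eqref{popierdolka} actually use.
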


\begin{remark}
{\em Assumption \eqref{popierdolka} is achievable. Indeed, suppose for instance that \[\limsup\limits_{x\to+\infty}\frac{\theta(x)}{x}<1.\] Since $\theta$ is usc, $\sup\limits_{x\in\Omega_n}\theta(|x|)\<\theta(|x_n|)$ for some $x_n\in\overline{\Omega}_n$. Then we are dealing with two possible cases. Let us first assume that $\sup\limits_{n\in\mathbb{N}}|x_n|<+\infty$. Then $\sup\limits_{n\in\mathbb{N}}\theta(|x_n|)<\infty$ and \[\liminf_{n\to\infty}(a_n-\sup\limits_{x\in\Omega_n}\theta(|x|))=+\infty\] for each $\z{a_n}\in\R{\mathbb{N}}_+$ with $\lim\limits_{n\to\infty}a_n=+\infty$. If there is the case $\lim\limits_{n\to\infty}|x_n|=+\infty$, then 
\begin{align*}
\liminf_{n\to\infty}(a_n-\sup\limits_{x\in\Omega_n}\theta(|x|))&:=\liminf_{n\to\infty}((L+1)||\Omega_n||^+-\sup\limits_{x\in\Omega_n}\theta(|x|))\\&\geqslant\liminf_{n\to\infty}((L+1)||\Omega_n||^+-\theta(|x_n|))\geqslant\liminf_{n\to\infty}((L+1)||\Omega_n||^+-L|x_n|)\\&\geqslant\liminf_{n\to\infty}((L+1)||\Omega_n||^+-L||\Omega_n||^+)=\lim_{n\to\infty}||\Omega_n||^+>0,
\end{align*} 
where $\limsup\limits_{x\to+\infty}\frac{\theta(x)}{x}<L<1$.}
\end{remark}

\begin{proof}
Let ${\mathcal H}(E)$ denote the space of nonempty closed convex and bounded subsets of $E$, endowed with the Hausdorff-Pompeiu metric. By virtue of \cite[Theorem 1.24]{linden} there exists a selector $\phi\colon{\mathcal H}(E)\to E$ which is uniformly continuous on bounded subsets of ${\mathcal H}(E)$. Define $g\colon\Omega\times E\to E$ by $g(x,u):=\phi(G(x,u))$. Observe that \[||G(x,u)||^+\<\theta(|x|)+\varphi(|u|)+||G(0,0)||^+.\] This means that $G$ maps bounded subsets of $\Omega\times E$ into bounded subsets of $E$. Assumption $(\text{G}_2)$ and the upper semicontinuity of $\theta$ and $\varphi$ at zero imply uniform continuity of $g$ on bounded subsets.\par Define $H\colon C(\Omega,E)\map C(\Omega,E)$ by the formulae $H:=N_g\circ{\mathcal F}$. It is a matter of routine to check that $N_g\in C(C(\Omega,E),C(\Omega,E))$. As we have managed to appoint previously, the map ${\mathcal F}$ is admissible. Thus, $H$ must be admissible. \par In view of \eqref{popierdolka} one has \[\liminf_{n\to\infty}\f(a_n-\sup_{x\in\Omega_n}\theta(|x|))-||G(0,0)||^+\g)>0,\] which means that one may choose $(L_n)_{n=N}^\infty\subset\R{}_+$ in such a way that \[\Phi(L_n,b)_n\<\frac{a_n-\sup\limits_{x\in\Omega_n}\theta(|x|))-||G(0,0)||^+}{\sup\limits_{x\in\Omega_n}||K(x)||_{L^\infty_n}(1+a_n)}.\] Let ${\mathcal X}$ be given by \eqref{X}. For $v\in H({\mathcal X})$ and $n\geqslant N$ one has \[||v||_{L_n}\<\sup_{x\in\Omega_n}||K(x)||_{L^\infty_n}\Phi(L_n,b)_n(1+a_n)+\sup_{x\in\Omega_n}\theta(|x|)+||G(0,0)||^+\<a_n.\] Hence, $H({\mathcal X})\subset{\mathcal X}$.\par Suppose that $M\subset{\mathcal X}$ is not relatively compact. Observe that \[\beta(H(M)(x))\<\beta(g(\{x\}\times{\mathcal F}(M)(x)))\<\beta(G(\{x\}\times{\mathcal F}(M)(x)))\<\varphi(\beta({\mathcal F}(M)(x)))\] for every $x\in\Omega$ (the assumption that $G$ is compact valued is here indispensable). In view of Lemma \ref{lemcon} one may choose sequences $\hat{L}\in\R{\mathbb{N}\setminus\{1,\ldots,N-1\}}_+$ and  $(k_n)_{n=N}^\infty$ in the following way 
\begin{equation}\label{tutaj3}
4\sup_{x\in\Omega_n}||K(x)||_{L^\infty_n}\Phi(\hat{L}_n,\eta)_n<k_n<1.
\end{equation}
Therefore, in view of \eqref{szacowanie}
\begin{equation}\label{tutaj2}
\begin{split}
\beta_{\hat{L}_n}(H(M))&\<\sup_{x\in\Omega_n}e^{-\hat{L}_n\tau(x)}\varphi\f(\beta({\mathcal F}(M)(x))\g)\<\sup_{x\in\Omega_n}e^{-\hat{L}_n\tau(x)}\beta({\mathcal F}(M)(x))=\beta_{\hat{L}_n}({\mathcal F}(M))\\&\<4\sup_{x\in\Omega_n}||K(x)||_{L^\infty_n}\Phi(\hat{L}_n,\eta)_n\,\beta_{\hat{L}_n}(M)
\end{split}
\end{equation}
for $n\geqslant N$. Since $g$ is in particular uniformly continuous on the set $\Omega_{n+1}\times D(0,R)$ with $R:=\sup\limits_{x\in\Omega_{n+1}}||K(x)||_{L^\infty_{n+1}}||b||_{L^1\big(\cl_\Omega(\Omega_{n+1})\big)}\f(1+r_{n+1}\g)$, we see that \[\sup_{x\in\Omega_n}\overline{\lim_{z\to x}}\sup_{\stackrel{\scriptstyle u\in M}{w\in N_F(u)}}|g(x,V(w)(z))-g(z,V(w)(z))|=0.\]
Moreover, since $\Omega_n$ is precompact and $e_n({\mathcal F}(M))=0$ one easily sees that for every $\eps>0$
\[\exists\,\delta>0\,\forall\,x\in\Omega_n\,\forall\,u_1,u_2\in D(0,R)\;\;\;|u_1-u_2|<\delta\Rightarrow|g(x,u_1)-g(x,u_2)|<\eps\] and \[\exists\,\gamma>0\,\forall\,x\in\Omega_n\,\forall\,z\in B(x,\gamma)\;\;\;\sup_{w\in N_F(M)}|V(w)(x)-V(w)(z)|<\delta.\] In other words, for every $\eps>0$ \[\sup_{x\in\Omega_n}\inf_{\gamma>0}\sup_{z\in B(x,\gamma)}\sup_{w\in N_F(M)}|g(x,V(w)(x))-g(x,V(w)(z))|<\eps.\]
It follows that for each $n\geqslant N$
\begin{equation}\label{tutaj}
\begin{split}
e_n(H(M))&=\sup_{x\in\Omega_n}\overline{\lim_{z\to x}}\sup_{w\in N_F(M)}|g(x,V(w)(x))-g(z,V(w)(z))|\\&\<\sup_{x\in\Omega_n}\overline{\lim_{z\to x}}\sup_{w\in N_F(M)}|g(x,V(w)(x))-g(x,V(w)(z))|\\&=0.
\end{split}
\end{equation}
Considering properties \eqref{tutaj2} and \eqref{tutaj} one sees that \[\nu^N_{\hat{L}}(H(M))_n\<4\sup_{x\in\Omega_n}||K(x)||_{L^\infty_n}\Phi(\hat{L}_n,\eta)_n\,\beta_{\hat{L}_n}(M)\<4\sup_{x\in\Omega_n}||K(x)||_{L^\infty_n}\Phi(\hat{L}_n,\eta)_n\,\nu^N_{\hat{L}}(M)_n\] for $n\geqslant N$. Taking into account coefficients $\z{k_n}$ characterized by \eqref{tutaj3} we may define the function $f\colon\R{\mathbb{N}\setminus\{1,\ldots,N-1\}}_+\times\R{\mathbb{N}\setminus\{1,\ldots,N-1\}}_+\to\R{\mathbb{N}\setminus\{1,\ldots,N-1\}}$ using formulae \eqref{f}. It is clear that \[f\big(\nu^N_{\hat{L}}(H(M)),\nu^N_{\hat{L}}(M)\big)\in\R{\mathbb{N}\setminus\{1,\ldots,N-1\}}_+\setminus\{0\}.\] Hence the assumption \eqref{cond} of Theorem \ref{fixed} is met and the existence of fixed points of $H$ follows.
\end{proof}

\section{Examples}

\begin{example}
{\em Let's modify \cite[Example 4.1]{arab} a bit. Consider the following equation
\begin{equation}\label{Example1}
x(t)=te^{-(1+t^2)}+\ln(\lambdaup+|x(t)|)+\int\limits_{\sin t}^{|t|}e^{t^2}(\cos(x(s))+2)\,ds,\;\;t\in\R{},
\end{equation}
where $\lambdaup>1$. It is easy to see that \[\sup\f\{\f|\,\int\limits_{\sin t}^{|t|}e^{t^2}(\cos(x(s))+2)\,ds\g|\colon t\in\R{}, x\in BC(\R{})\g\}=+\infty\] and \[\lim_{|t|\to\infty}\f|\,\int\limits_{\sin t}^{|t|}e^{t^2}(\cos(0)+2)-e^{t^2}\f(\cos\f(\frac{\pi}{2}\g)+2\g)\,ds\g|=+\infty.\] Therefore, the application of \cite[Theorem 3.1]{arab} must fail. However, assumptions $(\ka_1)$-$(\ka_2)$, $(\gie_1)$-$(\gie_2)$ and $(\F_1)$-$(\F_5)$ are satisfied for 
\[\begin{cases}
k(t,s):=\exp(t^2)\\ 
F(t,x):=\cos(x)+2\\ 
g(t,x):=te^{-(1+t^2)}+\ln(\lambdaup+|x|).
\end{cases}\]
Define $\Lambda\colon\R{}\to{\mathfrak L}(\R{})$ by $\Lambda(t):=(\sin t,|t|)$ and $\Omega_n:=(-n,n)$. Clearly, equation \eqref{Example1} poses a particular case of the inclusion \eqref{inclusion2}. Since $||\Lambda(t)||^+\<|t|$ for $t\in\R{}$, one has $\z{\Omega_n}\in{\bm\Omega}(\Lambda)$ and ${\bm\tau}(\Lambda)\neq\varnothing$. Furthermore, condition \eqref{brzeg} is met, because
\begin{align*}
\liminf_{n\to\infty}(n-\varphi(n)-||g(\cdot,0)||_n)&=\lim_{n\to\infty}\f(a_n-\frac{1}{\lambdaup}a_n-\sup_{t\in(-n,n)}|t|e^{-(1+t^2)}-\ln\lambdaup\g)\\&\geqslant\lim_{n\to\infty}\f(\f(1-\frac{1}{\lambdaup}\g)a_n-e^{-1}n-\ln\lambdaup\g)=+\infty
\end{align*}
for $a_n:=kn$ with $k>\frac{\lambdaup e^{-1}}{\lambdaup-1}$.
In connection with the above, equation \eqref{Example1} has at least one continuous solution by virtue of Theorem \ref{ex1}.}
\end{example}

\begin{example}
{\em Theorem 3.1 in \cite{arab} is failing even in the case of the most elementary Volterra equations of the second kind as the following example illustrates:
\begin{equation}\label{Example2}
u(x)=A+\int\limits_a^xu(y)\,dy,\;\;\;x\in(a,\infty)
\end{equation}
with $a\geqslant 0$ and $A\in\R{}$. Obviously, equation \eqref{Example2} possesses a unique continuous solution $u_0\colon(a,\infty)\to\R{}$ of the form $u_0(x);=A\exp(x-a)$. This function is unbounded, so \cite[Theorem 3.1]{arab} does not detect it.\par Define $\Lambda\colon(a,\infty)\to{\mathfrak L}((a,\infty))$ by $\Lambda(x):=(a,x)$ and $\Omega_n:=(a,a+n)$. Then $\z{\Omega_n}\in{\bm\Omega}(\Lambda)$. Moreover, ${\bm\tau}(\Lambda)\neq\varnothing$. Let
\[\begin{cases}
k(x,y):=1\\ 
F(x,u):=\{u\}\\ 
g(x,u):=A,\\
\varphi(x):=kx\text{ for some }k\in(0,1).
\end{cases}\]
Clearly, assumptions $(\ka_1)$-$(\ka_2)$, $(\gie_1)$-$(\gie_2)$ and $(\F_1)$-$(\F_5)$ are met. At the same time \[\liminf_{n\to\infty}(n-\varphi(n)-||g(\cdot,0)||_n)=\lim_{n\to\infty}((1-k)n-A)=+\infty.\] It is therefore clear that Theorem \ref{ex1} does detect the existence of the solution $u_0$.}
\end{example}

\begin{example}
{\em Consider the following problem:
\begin{equation}\label{wave}
\begin{dcases*}
u_{tt}-\Delta u=g_1\star f_2(t)+\Delta\int_0^tg_2\star f_1(s)\,ds&in $(0,\infty)\times\R{N}$\\
f_1(t,x)\in\f[h^1_1\f(t,x,\int_{\R{n}}k_1(t,y)u(t,y)\,dy\g),h^1_2\f(t,x,\int_{\R{n}}k_2(t,y)u(t,y)\,dy\g)\g]&in $(0,\infty)\times\R{N}$\\
f_2(t,x)\in\f[h^2_1\f(t,x,\int_{\R{n}}k_1(t,y)u(t,y)\,dy\g),h^2_2\f(t,x,\int_{\R{n}}k_2(t,y)u(t,y)\,dy\g)\g]&in $(0,\infty)\times\R{N}$\\
u_t(0)=\mathring{u}_2&on $\R{n}$\\
u(0)=\mathring{u}_1&on $\R{n}$,
\end{dcases*}
\end{equation}
where $\Delta$ is the Laplace operator, $g_i\in L^1(\R{N})$ and $k_i(t,\cdot)\in L^2(\R{N})$ for a.a. $t\in(0,\infty)$ and $i=1,2$. Let $\langle\cdot,\cdot\rangle$ denote the inner product in $L^2(\R{N})$. 
\begin{definition}
By the weak solution of the problem \eqref{wave} we mean $w\in C(\R{}_+,L^2(\R{N}))$ such that for every $v\in H^2(\R{N})$ the function $\langle w(\cdot),v\rangle$ is twice differentiable and $w$ satisfies
\begin{equation*}
\begin{dcases*}
\frac{d^2}{dt^2}\,\langle w(t),v\rangle=\langle w(t),\Delta v\rangle+\langle g_2\star f_2(t),v\rangle+\f\langle\int_0^tg_1\star f_1(s)\,ds,\Delta v\g\rangle&a.e. on $(0,\infty)$\\
\res{\frac{d}{dt}\,\langle w(t),v\rangle}{t=0}=\langle\mathring{u}_2,v\rangle\\
w(0)=\mathring{u}_1
\end{dcases*}
\end{equation*}
for some functions $f_1,f_2\in L_{\text{loc}}^1(\R{}_+,L^2(\R{N}))$ such that
\begin{equation*}
\begin{dcases}
h_1^1\f(t,x,\int_{\R{N}}\int_0^tk_1(t,y)w(s,y)\,dsdy\g)\<f_1(t,x)\<h_2^1\f(t,x,\int_{\R{N}}\int_0^tk_2(t,y)w(s,y)\,dsdy\g)\\
h_1^2\f(t,x,\int_{\R{N}}\!k_1(t,y)(w(t,y)-\mathring{u}_1(y))\,dy\g)\<f_2(t,x)\<h_2^2\f(t,x,\int_{\R{N}}\!k_2(t,y)(w(t,y)-\mathring{u}_1(y))\,dy\g)
\end{dcases}
\end{equation*}
for a.a. $t\in(0,\infty)$ and a.a. $x\in\R{N}$.
\end{definition}
\par Our hypotheses on $h_j^i\colon(0,\infty)\times\R{N}\times\R{}\to\R{}$ are the following:
\begin{itemize}
\item[$(\text{h}_1)$] for $i=1,2$ and for any $u\in L^2(\R{N})$ there exists $v\in L^1_{\text{loc}}(\R{}_+,L^2(\R{N}))$ such that \[h^i_1\f(t,x,\int_{\R{N}}k_1(t,y)u(y)\,dz\g)\<v(t,x)\<h^i_2\f(t,x,\int_{\R{N}}k_2(t,y)u(y)\,dy\g)\] for a.a. $t\in(0,\infty)$ and a.a. $x\in\R{N}$,
\item[$(\text{h}_2)$] for $i=1,2$, for a.a. $t\in(0,\infty)$ and for a.a. $x\in\R{N}$ the functions $h^i_1(t,x,\cdot)$ are lower semicontinuous while $h^i_2(t,x,\cdot)$ are upper semicontinuous,
\item[$(\text{h}_3)$] for $i,j=1,2$ there exists $b_i\in L^1_{\text{loc}}(\R{}_+)$ and $c_i\colon(0,\infty)\times\R{N}\times\R{}_+\to\R{}$ such that \[\sup_{|z|\<||k_j(t,\cdot)||_2r}|h_j^i(t,x,z)|\<c_i(t,x,r)\] and \[\int_{\R{N}}c_i^2(t,x,r)\,dx\<b_i^2(t)(1+r)^2\] for every $r>0$, for a.a. $t\in I$ and for a.a. $x\in\R{N}$.
\end{itemize}}

\begin{theorem}
If hypotheses $(\text{h}_1)$-$(\text{h}_3)$ hold, then for every $\mathring{u}_1,\mathring{u}_2\in L^2(\R{N})$ problem \eqref{wave} possesses a weak solution. 
\end{theorem}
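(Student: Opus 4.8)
The plan is to recast \eqref{wave} as an abstract Volterra inclusion of the form \eqref{volterra} on the Fr\'echet space $C(\Omega,E)$ with $\Omega:=(0,\infty)$ and $E:=L^2(\R{N})\times L^2(\R{N})$ (a reflexive space, so $(\mathbb{E})$ holds), and then invoke Theorem \ref{g}. First I would set $A:=-\Delta$ with domain $H^2(\R{N})$; being nonnegative and self-adjoint, $A$ generates a strongly continuous cosine family $\{C(t)\}_{t\geqslant0}$ with sine family $S(t):=\int_0^tC(\tau)\,d\tau$, both uniformly bounded on compact time intervals (their Fourier multipliers are $\cos(t|\xi|)$ and $\sin(t|\xi|)/|\xi|$). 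Reading the weak formulation as the second order problem $w''+Aw=g_1\star f_2(t)-A\int_0^tg_2\star f_1(s)\,ds$ with $w(0)=\mathring u_1$, $w'(0)=\mathring u_2$, the variation of parameters formula gives $w(t)=C(t)\mathring u_1+S(t)\mathring u_2+\int_0^tS(t-s)g_1\star f_2(s)\,ds-\int_0^tS(t-s)A\big(\int_0^sg_2\star f_1(\sigma)\,d\sigma\big)ds$.

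The term carrying the Laplacian is the one that needs care. Using $AS(\tau)=-C'(\tau)$ and integrating by parts in the inner variable I would turn it into $-\int_0^tg_2\star f_1(\sigma)\,d\sigma+\int_0^tC(t-s)g_2\star f_1(s)\,ds$, so that altogether $w(t)=C(t)\mathring u_1+S(t)\mathring u_2+\int_0^t[C(t-s)-I]g_2\star f_1(s)\,ds+\int_0^tS(t-s)g_1\star f_2(s)\,ds$, a genuine Volterra representation with bounded operator kernels. The only non-local feature left is that the admissibility corridor for $f_1$ is governed by $\langle k_j(t,\cdot),\int_0^tw(s)\,ds\rangle$; to localise it I would augment the state to $U:=(w,W)$ with $W(t):=\int_0^tw(s)\,ds$, whose own evolution follows from the previous display by Fubini and again produces Volterra kernels, built this time from $S(t-s)-(t-s)I$ and $\int_0^{t-s}S$. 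In the augmented variable both corridors become pointwise in $U(t)$: the one for $f_2$ through $\langle k_j(t,\cdot),w(t)-\mathring u_1\rangle$ and the one for $f_1$ through $\langle k_j(t,\cdot),W(t)\rangle$. Since the $W$-component is by construction the primitive of the $w$-component, at a fixed point the identity $W(t)=\int_0^tw$ holds automatically, so the first coordinate of the fixed point is a weak solution.

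This realises \eqref{volterra} with $\Lambda(t):=(0,t)$, with the affine outer map $g(t,U,\zeta):=\mathbf g(t)+\zeta$ (where $\mathbf g$ collects the data propagators and the middle slot does not enter, so $(\gie_1')$--$(\gie_2')$ hold with $\varphi\equiv0$ and $\vartheta=\mathrm{id}$), and with the $2\times2$ operator kernel $\mathbf k(t,s)$ whose entries are compositions of $C(t-s)-I$, $S(t-s)$ and their primitives with the convolutions $g_i\star\in\mathcal L(L^2(\R{N}))$. The multimap $F(t,(a,b))$ is the product of the two order intervals cut out by $h^1_j(t,\cdot,\langle k_j(t,\cdot),b\rangle)$ and $h^2_j(t,\cdot,\langle k_j(t,\cdot),a\rangle)$, and $(\F_1)$--$(\F_4)$ read off from $(\mathrm{h}_1)$--$(\mathrm{h}_3)$: nonemptiness and a measurable selection from $(\mathrm{h}_1)$, closed convex values as order intervals, upper hemicontinuity from the one-sided semicontinuity in $(\mathrm{h}_2)$, and the sublinear bound $\|F(t,U)\|^+\<b(t)(1+|U|)$ from $(\mathrm{h}_3)$ since $|\langle k_j(t,\cdot),U\rangle|\<\|k_j(t,\cdot)\|_2|U|$. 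The growth condition \eqref{brzeg2} is then secured by choosing $a_n\to\infty$ fast enough to dominate $\|\mathbf g\|_n$ on $\Omega_n=(0,n)$, which grows at most polynomially because $\|S(t)\mathring u_2\|_2\<t\|\mathring u_2\|_2$.

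Two points carry the real weight. The first is the kernel hypothesis $(\ka_2)$: a cosine family is \emph{not} norm continuous, yet $t\mapsto C(t-s)\,g_i\star$ is, because its multiplier $\cos((t-s)|\xi|)\hat g_i(\xi)$ is uniformly small for large $|\xi|$ by Riemann--Lebesgue ($g_i\in L^1$) and equicontinuous in $t$ on bounded frequencies; the same convolution smoothing rescues the $S$-entries, giving $K\in C(\Omega,L^\infty_{\mathrm{loc}}(\Omega,\mathcal L(E)))$. The second, and the genuine obstacle, is $(\F_5)$: order intervals in $L^2(\R{N})$ are not norm compact, so the estimate $\beta(F(t,M))\<\eta(t)\beta(M)$ cannot come from the pointwise corridor alone, and the Hausdorff measure must be supplied with the compactness substitute furnished by $(\mathrm{h}_3)$ together with the fact that $F(t,\cdot)$ factors through the finite dimensional data $\langle k_j(t,\cdot),\cdot\rangle$. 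Once $(\F_5)$ is in hand, the estimates leading to \eqref{szacowanie}--\eqref{szacowanie2}, combined with Lemma \ref{lemcon}, render $H=N_g\circ(I\times(V\circ N_F))$ a $\nu^N_L$-condensing operator, and Theorem \ref{g} produces a fixed point, hence a weak solution, for arbitrary $\mathring u_1,\mathring u_2\in L^2(\R{N})$.
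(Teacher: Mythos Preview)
Your setup differs from the paper's in where you place the convolution $g_i\star$, and this is exactly where the gap lies. You put $g_i\star$ into the kernel $\mathbf{k}(t,s)$ and leave $F(t,U)$ as a bare product of $L^2$ order intervals. You then correctly flag $(\F_5)$ as ``the genuine obstacle'' but never close it: the fact that $F(t,\cdot)$ factors through the finite--dimensional data $\langle k_j(t,\cdot),\cdot\rangle$ controls only the \emph{dependence} of $F$ on $u$, not the size of its \emph{values}. For bounded $M$ the image $F(\{t\}\times M)$ is still a union of nondegenerate order intervals in $L^2(\R{N})$, each of which has strictly positive Hausdorff measure of noncompactness; hence $\beta(F(\{t\}\times M))>0$ regardless of $\beta(M)$, and no inequality of the type $\beta(F(t,M))\leqslant\eta(t)\beta(M)$ is available. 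Since Theorem~\ref{g} takes $(\F_5)$ as a hypothesis and you propose to invoke it directly, the argument stops here.

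The paper resolves precisely this point by moving the convolution \emph{into} $F$: it sets $F(t,u_1,u_2):=(g_1\star F_1(t,u_1))\times(g_2\star F_2(t,u_2))$. Then $(\text{h}_3)$ gives a single dominating function $c_i(t,\cdot,r)\in L^2$ for $F_i(\{t\}\times M)$, supplying uniform tail decay, while Young's inequality gives $\|g_i\star f(\cdot+h)-g_i\star f\|_2\leqslant\|\tau_hg_i-g_i\|_1\|f\|_2$, i.e.\ $2$--equiintegrability uniformly over the image. Riesz--Kolmogorov then makes $g_i\star F_i(\{t\}\times M)$ relatively \emph{norm} compact, so $(\F_5)$ holds with $\eta\equiv 0$. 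With the convolution absorbed into $F$, the kernel becomes the integrated semigroup $k(t,s)=S(t-s)$ generated by $A(u_1,u_2)=(u_2,\Delta u_1)$ on $D(A)=H^2\times L^2$; the paper then uses the simpler inclusion \eqref{inclusion2} and Theorem~\ref{ex1} rather than Theorem~\ref{g}. As a bonus, the integrated semigroup already satisfies $S(t)(\mathring u_1,\mathring u_2)=\big(\int_0^t w,\,w(t)-\mathring u_1\big)$, so the two--component state automatically carries both the primitive and the shifted solution, making your manual augmentation $(w,W)$ and the attendant consistency check unnecessary. Note also that if you tried to repair your route by moving $g_i\star$ into $F$, your remaining kernel would contain the bare cosine family $C(t-s)$, which is not norm continuous, so $(\ka_2)$ would fail on that side; the integrated--semigroup formulation is what lets the paper have both $(\ka_2)$ and $(\F_5)$ at once.
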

\begin{proof}
Let $\Omega:=(0,\infty)$, $E:=L^2(\R{n})\times L^2(\R{n})$ and $D(A):=H^2(\R{n})\times L^2(\R{n})$. Assume that the Hilbert space $E$ is furnished with the norm \[||(x,y)||_E:=\f(||x||_2^2+||y||_2^2\g)^\frac{1}{2}.\] The linear operator $A\colon D(A)\to E$, given by $A(u_1,u_2):=(u_2,\Delta u_1)$, generates an exponentially bounded non-degenerate integrated semigroup $\{S(t)\}_{t\geqslant 0}$ on $E$ such that \[S(t)(\mathring{u}_1,\mathring{u}_2)=\f(\int_0^tw(s)\,ds,w(t)-\mathring{u}_1\g),\] where $w\in C^2([0,\infty),L^2(\R{n}))$ satisfies
\begin{equation*}
\begin{dcases}
\frac{d^2}{dt^2}\,\langle w(t),v\rangle=\langle w(t),\Delta v\rangle\\
\res{\frac{d}{dt}\,\langle w(t),v\rangle}{t=0}=\langle\mathring{u}_2,v\rangle\\
w(0)=\mathring{u}_1
\end{dcases}
\end{equation*}
for every $v\in H^2(\R{n})$ (see \cite[Th.7.1.]{thieme}).\par For $i=1,2$ define $F_i\colon\Omega\times L^2(\R{n})\map L^2(\R{n})$ by the formula
\begin{align*}
&F_i(t,u):=\\&\f\{v\in L^2(\R{n})\colon h^i_1\f(t,x,\int_{\R{n}}\!k_1(t,y)u(y)\,dy\g)\<v(x)\<h^i_2\f(t,x,\int_{\R{n}}\!k_2(t,y)u(y)\,dy\g)\text{ a.e. on }\R{n}\g\}\!.
\end{align*}
Let $F\colon\Omega\times E\map E$ be a map given by $F(t,u_1,u_2):=(g_1\star F_1(t,u_1))\times(g_2\star F_2(t,u_2))$. Consider the following Volterra integral inclusion
\begin{equation}\label{waveeq}
u(t)\in S(t)(\mathring{u}_1,\mathring{u}_2)+\int_0^tS(t-s)F(s,u(s))\,ds,\;\;\;t\in\Omega.
\end{equation}
Clearly, the above inclusion poses a special case of the problem \eqref{inclusion2}.\par Fix $i\in\{1,2\}$ and $u\in L^2(\R{N})$. Let $v_i\in L^1_{\text{loc}}(\R{}_+,L^2(\R{N}))$ be the mapping existing in view of the assumption $(\text{h}_1)$. Let $\z{v_n^i\colon\Omega\to L^2(\R{N})}$ be a sequence of simple functions such that $v_n^i(t)\xrightarrow[n\to\infty]{L^2(\R{N})}v_i(t)$ a.e. on $\Omega$. By Young's inequality \[||g_i\star v_n^i(t)-g_i\star v_i(t)||_2=||g_i\star(v_n^i-v_i)(t)||_2\<||g_i||_1||v_n^i(t)-v_i(t)||_2.\] Whence $g_i\star v_n^i(t)\xrightarrow[n\to\infty]{L^2(\R{N})}g_i\star v_i(t)$ a.e. on $\Omega$ i.e., the function $g_i\star v_i(\cdot)$ is measurable. Eventually, $(g_1\star v_1(\cdot))\times(g_2\star v_2(\cdot))$ poses a strongly measurable selection of the multimap $F(\cdot,u)$. In this manner assumption  $(\F_2)$ has been verified.\par Take $(u_1,u_2)\in E$ and $(g_1\star f_1,g_2\star f_2)\in F(t,(u_1,u_2))$. Then \[|f_i(x)|\<\max\f\{\f|h_1^i\f(t,x,\int_{\R{n}}k_1(t,y)u_i(y)\,dy\g)\g|,\f|h_2^i\f(t,x,\int_{\R{n}}k_2(t,y)u_i(y)\,dy\g)\g|\g\}\<c_i(t,x,||u_i||_2)\] and $||f_i||_2\<b_i(t)(1+||u_i||_2)$. Whence 
\begin{align*}
||F(t,(u_1,u_2))||^+_2&\<||g_1||_1b_1(t)(1+||u_1||_2)+||g_2||_2b_2(t)(1+||u_2||_2)\\&\<\max\{||g_1||_1,||g_2||_1\}(b_1(t)+b_2(t))(1+||(u_1,u_2)||_E)
\end{align*}
i.e., $(\F_4)$ is met. Let $M\subset L^2(\R{N})$ be bounded. Since $F_i(\{t\}\times M)$ is relatively weakly comapct, for each $\eps>0$ there is a measurable and bounded subset $\Omega_\eps\subset\R{N}$ such that \[\sup_{f\in F_i(\{t\}\times M)}||g_i\star f||_{L^2(\R{N}\setminus\Omega_\eps)}\<||g_i||_1\sup_{f\in F_i(\{t\}\times M)}||f||_{L^2(\R{N}\setminus\Omega_\eps)}<\eps,\] in view of the Dunford-Pettis theorem. On the other hand the set $g_i\star F_i(\{t\}\times M)$ is $2$-equiintegrable (cf. \cite[Corollary 4.28]{brezis}). Therefore, $g_i\star F_i(\{t\}\times M)$ satisfies hypotheses of the Riesz-Kolmogorov theorem. Eventually, the image $F(\{t\}\times M)$ is relatively compact in the norm topology of $E$.\par Since the operator $g_i\star(\cdot)\colon L^2(\R{N})\to L^2(\R{N})$ is linear and continuous in the norm topology, weak convergence $f_n\xrightharpoonup[n\to\infty]{L^2(\R{N})}f$ entails $g_i\star f_n\xrightharpoonup[n\to\infty]{L^2(\R{N})}g_i\star f$. Therefore, the reiteration of the arguments contained in the proof of \cite[Theorem 8]{pietkun} leads to the conclusion that the graph $\gr(F(t,\cdot))$ is sequentially closed in $(E,||\cdot||_E)\times(E,w)$ for a.a. $t\in\Omega$. Considering that the set-valued map $F(t,\cdot)\colon(E,||\cdot||_E)\map(E,w)$ is quasi-compact, it must be must be weakly upper semicontinuous. Consequently, assumption $(\F_3)$ is verified. Moreover, $F$ has nonempty convex and weakly compact values.\par Define $\Lambda\colon\Omega\to{\mathfrak L}(\R{})$ by $\Lambda(t):=(0,t)$ and $\Omega_n:=(0,n)$. Clearly, $\z{\Omega_n}\in{\bm\Omega}(\Lambda)$ and ${\bm\tau}(\Lambda)\neq\varnothing$. It is easily verifiable that functions $g\colon\Omega\times E\to E$ and $k\colon\Delta\to{\mathcal L}(E)$ such that $g(t,u):=S(t)(\mathring{u}_1,\mathring{u}_2)$ and $k(t,s):=S(t-s)$ satisfy assumptions $(\gie_1)$-$(\gie_2)$ and $(\ka_1)$-$(\ka_2)$, respectively. As it comes to verification of assumption \eqref{brzeg}, one may take advantage of the exponential bound of the semigroup $\{S(t)\}_{t\geqslant 0}$ and estimate 
\begin{align*}
\liminf_{n\to\infty}(a_n-\varphi(a_n)-||g(\cdot,0)||_n)&=\liminf_{n\to\infty}\f(a_n-La_n-\sup_{t\in(0,n)}||S(t)(\mathring{u}_1,\mathring{u}_2)||_E\g)\\&\geqslant\liminf_{n\to\infty}\f((1-L)a_n-\sup_{t\in(0,n)}Me^{\omega t}||(\mathring{u}_1,\mathring{u}_2)||_E\g)\\&=\lim_{n\to\infty}\f((1-L)a_n-Me^{\omega n}||(\mathring{u}_1,\mathring{u}_2)||_E\g)=+\infty
\end{align*}
for $a_n:=kMe^{\omega n}||(\mathring{u}_1,\mathring{u}_2)||_E$ with $k>\frac{1}{1-L}$ and $L\in(0,1)$ (the exact values of constants $M,\omega$ have been estimated in \cite{pietkun}).\par In view of Theorem \ref{ex1} the Volterra integral inclusion \eqref{waveeq} possesses a continuous solution $u=(u_1,u_2)\colon(0,\infty)\to E$. A short glimpse at the definition of the semigroup $\{S(t)\}_{t\geqslant 0}$ and the set-valued perturbation $F$ leads to the conclusion that the function $u_2+\mathring{u}_1$ poses a weak solution of the problem \eqref{wave} (compare \cite[Section 7]{thieme}).
\end{proof}
\end{example}

\begin{example}
{\em Consider the following problem:
\begin{equation}\label{multi}
\frac{\partial^Nu}{\partial x_1\ldots\partial x_N}(x)=f(x,u(x))\;\;\;\text{on }\R{N}_+
\end{equation}
whose solutions satisfy the boundary conditions \[u(\sigma(1)x_1,\dots,\sigma(N)x_N)=u_\sigma(x_\sigma)\] for $\sigma\in\{0,1\}^{\{1,\ldots,N\}}\setminus\{\sigma\equiv 1,\sigma\equiv 0\}$ and functions $u_\sigma$ are compatible with each other on the boundary of respective domains $\R{N-\#\sigma^{-1}(0)}_+$. For the sake of typographical simplicity put ${\mathcal I}:=\{0,1\}^{\{1,\ldots,N\}}\setminus\{\sigma\equiv 1,\sigma\equiv 0\}$. Assume that functions $f\colon\R{N}_+\times\R{}\to\R{}$ and $u_\sigma\colon\R{N-\#\sigma^{-1}(0)}_+\to\R{}$ for $\sigma\in{\mathcal I}$ are continuous. Assume also that $|f(x,u)|\<b(x)(1+|u|)$ for some $b\in L^1_{\text{loc}}(\R{N}_+)$. In view of Fubini's theorem the Cauchy problem \eqref{multi} is equivalent to the following Volterra integral equation
\begin{equation}\label{multi2}
u(x)=\sum_{\sigma\in{\mathcal I}}(-1)^{\#\sigma^{-1}(0)+1}u_\sigma(x_\sigma)+(-1)^{N+1}u(\overline{0})+\int\limits_{\prod\limits_{i=1}^N(0,x_i)}f(y,u(y))\,\ell^N(dy),\;\;\;x\in\R{N}_+.
\end{equation}
\par Define $\Lambda\colon\Int(\R{N}_+)\to{\mathfrak L}(\R{N})$ by $\Lambda(x):=\prod\limits_{i=1}^N(0,x_i)$ and $\Omega_n:=(0,n)^N$. Then $\z{\Omega_n}\in{\bm\Omega}(\Lambda)$. Moreover, ${\bm\tau}(\Lambda)\neq\varnothing$ (cf. Example \ref{ex3}(a)). Let
\[\begin{cases}
k(x,y):=1\\ 
F(x,u):=\{f(x,u)\}\\ 
g(x,u):=\sum\limits_{\sigma\in{\mathcal I}}(-1)^{\#\sigma^{-1}(0)+1}u_\sigma(x_\sigma)+(-1)^{N+1}u(\overline{0}),\\
\varphi(x):=Lx\text{ for some }L\in(0,1).
\end{cases}\]
Clearly, assumptions $(\ka_1)$-$(\ka_2)$, $(\gie_1)$-$(\gie_2)$ and $(\F_1)$-$(\F_5)$ are met. Notice that $\#{\mathcal I}=2^N-2$ and \[R_n:=\sum\limits_{\sigma\in{\mathcal I}}\sup_{x\in(0,n)^N}|u_\sigma(x_\sigma)|+|u(\overline{0})|<+\infty.\] Since $||g(\cdot,0)||_n\<R_n$, one obtains for $a_n:=kR_n$ with $k>(1-L)^{-1}$ \[\liminf_{n\to\infty}(a_n-\varphi(a_n)-||g(\cdot,0)||_n)\geqslant\liminf_{n\to\infty}((1-L)k-1)R_n>0\] (just assume that $u(\overline{0})\neq 0$). Summing up, all the hypotheses of Theorem \ref{ex1} are met. Consequently, there exists a continuous function $u\colon\R{N}_+\to\R{}$ for which the integral equation \eqref{multi2} is satisfied. This map poses a classical solution of the initial value problem \eqref{multi}.}
\end{example}

\end{document}